\newcounter{num}
\theoremstyle{plain}
\newtheorem{theorem}[num]{Theorem}
\newtheorem{lemma}[num]{Lemma}
\newtheorem{cor}[num]{Corollary}
\newtheorem{prop}[num]{Proposition}
\newtheorem*{thm*}{Theorem}
\newtheorem*{lemma*}{Lemma}
\newtheorem*{prop*}{Proposition}
\newtheorem*{cor*}{Corollary}
\theoremstyle{remark}
\newcommand{\N}{{\mathbb N}}
\newcommand{\R}{{\mathbb R}}
\newcommand{\BS}{{S}}
\newcommand{\Z}{{\mathbb Z}}
\newcommand{\vK}{{\mathcal K}}
\newcommand{\vL}{{\mathcal L}}
\newcommand{\vT}{{\mathcal T}}
\newcommand{\dom}{\preccurlyeq}
\newcommand{\vp}{\varphi}
\newcommand{\e}{\varepsilon}
\newcommand{\MCG}{\mbox{MCG}}
\newcommand{\vPML}{\mathcal{PML}}
\newcommand{\gp}[3]{(#2 \cdot #3)_{#1}}
\renewcommand{\P}{\mathbb{P}}
\renewcommand{\le}{\leqslant}
\renewcommand{\ge}{\geqslant}
\newcommand{\rmu}{\check \mu}
\newcommand{\norm}[1]{\left| {#1} \right|}
\newcommand{\dsp}{d_{\text{sp}}}
\newcommand{\supp}{\text{supp}}
\newcommand
\newcounter{case}
\newenvironment{case}{\stepcounter{case} \addvspace{.5\baselineskip} \noindent\textbf{Case \thecase}.}{\hfill\fbox{Case \thecase}}
\DeclareMathSymbol
{\rightrightarrows}
{\mathrel}{AMSa}{"13}
\begin{document}

\author{Alexander Lubotzky, Joseph Maher and Conan Wu}

\title{Random methods in 3-manifold theory}

\maketitle

{\hfill \today \hfill}

\tableofcontents

\section{Introduction}

Over the years, random methods have evolved into powerful techniques
in several areas of mathematics. Most notably, as pioneered by Paul
Erd\H{o}s \cite{erdos}, the study of random graphs has become an
important branch of contemporary graph theory. What is especially
fascinating about this development is the fact that such techniques
solved many problems which have nothing to do with probability: one
can use random constructions to show the existence of a graph
satisfying particular properties without constructing explicit
examples.  For example, graphs with both arbitrarily large girth and
arbitrarily large chromatic number were shown to exist by random
methods by Erd\H{o}s \cite{erdos} long before explicit examples were
found by Lov\'asz \cite{lovasz} and Lubotzky, Phillips and Sarnak
\cite{lps}.

The goal of this paper is to present similar methods within the world
of 3-manifolds. In 2006, Dunfield and W. Thurston \cite{DT} presented
a model of `random' 3-manifolds by considering random walks on the
mapping class group, and a theory of random 3-manifolds is starting to
emerge (\cite{dt2}, \cite{R}, \cite{K}, \cite{M1}, \cite{dw},
\cite{M3}). Here we will use this theory to prove the following
existence result, which a priori has nothing to do with randomness.

\begin{theorem} \label{theorem:main} %
For any integers $k$ and $g$ with $g \geq 2$, there exist infinitely
many closed hyperbolic $3$-manifolds which are integral homology
$3$-spheres with Casson invariant $k$ and Heegaard genus equal to $g$.
\end{theorem}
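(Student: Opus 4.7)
The plan is to combine the Dunfield--Thurston random-walk model of $3$-manifolds, restricted to the Torelli subgroup of the mapping class group, with a deterministic adjustment that pins down the Casson invariant.

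Fix $g \geq 2$ and let $\vI_g$ denote the Torelli subgroup of $\MCG(\Sigma_g)$. A closed $3$-manifold $M_\vp$ with a genus-$g$ Heegaard splitting is determined by a gluing element $\vp \in \MCG(\Sigma_g)$, and when $\vp \in \vI_g$ the manifold $M_\vp$ is automatically an integral homology $3$-sphere, so $\lambda(M_\vp) \in \Z$ is defined. I would run a random walk $\{\vp_n\}_{n \geq 0}$ on $\vI_g$ driven by a symmetric, finitely supported step distribution whose support generates $\vI_g$. By the random-walk results of Maher and the other work cited in the introduction, with probability tending to $1$ as $n \to \infty$ the element $\vp_n$ is pseudo-Anosov and the Hempel distance $d(\vD_1, \vp_n(\vD_2))$ of the resulting splitting grows at least linearly in $n$; Hempel's distance criterion combined with Perelman's geometrization then forces $M_{\vp_n}$ to be hyperbolic, and by Scharlemann--Tomova its Heegaard genus equals $g$ exactly.

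To realise the prescribed value $k$ of the Casson invariant I would use Morita's description of $\lambda$ on $\vI_g$. There is a specific element $\tau \in \vI_g$---for example a Dehn twist about a bounding simple closed curve whose pushoffs link suitably in the handlebodies---and a nonzero integer $c$ such that composing the gluing with $\tau$ corresponds to $\pm 1$-surgery on a knot in $M_\vp$ and changes the Casson invariant by $c$; by the surgery formula for $\lambda$ one may normalise $c = 1$. Then for each random output $\vp_n$ I would set $m_n = k - \lambda(M_{\vp_n})$ and $\psi_n = \tau^{m_n}\vp_n \in \vI_g$, so that $M_{\psi_n}$ is an integer homology $3$-sphere with $\lambda(M_{\psi_n}) = k$ by construction.

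The main obstacle is that premultiplying by $\tau^{m_n}$ moves the disk set $\vD_1$ by $O(|m_n|)$ in the curve complex and can therefore destroy the large Hempel distance. One must check that $|m_n| = |k - \lambda(M_{\vp_n})|$ grows strictly more slowly than $d(\vD_1, \vp_n(\vD_2))$. This is where the proof becomes delicate: $\vp \mapsto \lambda(M_\vp)$ is a quasi-morphism on $\vI_g$ (Morita), and if the step distribution is chosen symmetric so that the Casson-invariant process $\{\lambda(M_{\vp_n})\}$ on $\Z$ has drift zero, one expects $|\lambda(M_{\vp_n})| = O(\sqrt{n})$ by a central-limit argument, which is dwarfed by the $\Theta(n)$ Hempel distance. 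Assuming this quantitative comparison, for all sufficiently large $n$ the manifold $M_{\psi_n}$ is a closed hyperbolic integer homology $3$-sphere of Heegaard genus $g$ and Casson invariant $k$; since the hyperbolic volumes grow without bound in $n$, Mostow rigidity yields infinitely many pairwise non-homeomorphic examples.
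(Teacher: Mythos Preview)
Your correction strategy is genuinely different from the paper's, but it breaks at precisely the two places you flag as delicate. First, $\vp \mapsto \lambda(M_\vp)$ is \emph{not} a quasi-morphism on the Torelli group: Morita's formula for the defect $\lambda(\vp\psi)-\lambda(\vp)-\lambda(\psi)$ is a bilinear pairing in the Johnson homomorphisms of $\vp$ and $\psi$, hence unbounded, so your central-limit bound $|\lambda(M_{\vp_n})| = O(\sqrt{n})$ has no justification. Only on the Johnson kernel $\vK$ is $\lambda$ a genuine homomorphism; and since neither $\vK$ nor the genus-$2$ Torelli group is known to be finitely generated, one must in any case pass to a carefully chosen finitely generated subgroup. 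Second, even granting a bound on $|m_n|$, the assertion that $\tau^{m_n}$ moves the disc set by $O(|m_n|)$ is not what you need and is not justified: $\tau$ is reducible and fixes its core curve, so there is no translation-length estimate, and the relevant quantity $d(\tau^{-m_n}\mathcal{D}, \vp_n h_{S^3}\mathcal{D})$ involves two unbounded quasiconvex sets whose distance is \emph{not} controlled by the displacement of any single point or by a Hausdorff distance (which is infinite here). Since $m_n$ depends on $\vp_n$, the corrected element $\tau^{m_n}\vp_n$ is not itself a random-walk location, and the linear-progress results do not apply to it directly.

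The paper avoids both problems by a rate comparison with no correction step. Working in a finitely generated $H < \vK$ chosen so that $\lambda \colon H \to \Z$ is a surjective homomorphism, the image of the walk is an honest symmetric finite-range random walk on $\Z$, giving $\P(\lambda(w_n)=k) \ge c/\sqrt{n}$ for every $k$. The substantial new input is an upgrade of ``Hempel distance grows linearly in probability'' to ``the set of elements with Hempel distance $\le 2g$ is \emph{exponentially} small''. Since $c/\sqrt{n}$ eventually dominates any exponential decay, the fibre $\lambda^{-1}(k)$ cannot be contained in the bad set, and one obtains the desired manifolds without ever modifying the random gluing.
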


In fact, results announced by Brock and Souto \cite{brock-souto} would
show that the volume of these $3$-manifolds tends to infinity.

There is however a difference between our methodology and the common
practice of random methods in graph theory. In graph theory, usually
one proves ``0-1 laws'' and existence is shown by proving that
``most'' objects satisfy the desired property, even though no explicit
examples are given. Here our considerations will be somewhat more
delicate: we will have to compare rates of decay of various properties
along random walks, and the difference between them will ensure the
existence of the desired manifolds.  We hope that our results will be
an initial example of applying random methods to $3$-manifolds.

\subsection{Outline}

In this section we give a brief outline of the proof.  By a classical
result (see, for example \cite{H}), every closed 3-manifold can be
obtained by gluing two genus $g$ handlebodies along their boundary
surfaces $S_g$, and the minimum such $g$ is called the Heegaard genus
of the manifold.  As isotopic gluing maps give homeomorphic
$3$-manifold, every element $\phi$ of the mapping class group
$\MCG(S_g)$ will give a $3$-manifold $M(\phi)$ of Heegaard genus at
most $g$. The main idea of \cite{DT} is that a random walk on
$\MCG(S_g)$ gives rise to a random model of $3$-manifolds with
Heegaard genus at most $g$; one may also consider random walks on any
subgroup $H$ of $\MCG(S_g)$.

If $w_n$ is a random walk on a group $H$ generated by the probability
distribution $\mu$, and $Y$ is a subset of $H$, we say that $Y$ is
\emph{exponentially small with respect to $\mu$} if the probability of
visiting $Y$ decays exponentially fast with $n$. We say the set $Y$ is
\emph{exponentially large with respect to $\mu$} if the complement of
$Y$ is exponentially small.  We will often just write
\emph{exponentially small} or \emph{exponentially large} if the
probability distribution $\mu$ is clear from context. We do not
necessarily require that $\mu$ be symmetric, however, we always
require that the semi-group generated by the support of $\mu$ is a
group. The main idea of the proof is to find a specifically chosen
finitely generated subgroup $H$ of the Torelli subgroup $\mathcal{T}$
of the $\MCG(S_g)$, and for random walks on this subgroup we show:

\begin{enumerate}[(a)]

\item The set of elements of $H$ giving rise to hyperbolic manifolds
is exponentially large.

\item The set elements of $H$ giving rise to manifolds of Heegaard
genus exactly $g$ is exponentially large.

\item The Casson invariant restricted to $H$ is a homomorphism from
$H$ onto $\Z$.

\end{enumerate}

Theorem \ref{theorem:main} follows from (a), (b) and (c). Indeed by
standard results from random walks on $\Z$, the random walk $w_n$
visits each $k \in \Z$ with probability $1/\sqrt{n}$, for $n$
sufficiently large.  By (a) and (b) most of these visits will give
rise to hyperbolic manifolds of Heegaard genus exactly $g$. The
resulting manifolds will also be integral homology spheres, as we
shall choose the subgroup $H$ to be contained in the Torelli subgroup.

In more detail, we choose $H$ to be a subgroup of $\vK$, the group
generated by Dehn twists in separating curves. By a result of Morita
\cite{Mo}, the Casson invariant $\lambda \colon \vK \to \Z$ is an
epimorphism. While $\vK$ is not expected to be finitely generated, we
choose a sufficiently large finitely generated subgroup $H$ of $\vK$
for which $\lambda$ restricted to $H$ still has surjective image in
$\Z$.

For this subgroup $H$ we prove properties (a) and (b). Moreover, much
of what we prove holds for complete subgroups of the mapping class
group, i.e. subgroups whose limit set is equal to the full
boundary. To put this in perspective, let us mention that Maher
\cite{M1} showed that in complete, finitely generated subgroups, the
probability that a random walk gives rise to a manifold which is
hyperbolic, and of Heegaard genus $g$, tends to $1$, but without the
exponential decay estimate. We now have the following result which is
of independent interest. We shall write $\supp(\mu)$ for the support
of $\mu$, i.e. all group elements $g \in G$ with $\mu(g) > 0$. We
shall write $\langle \supp(\mu) \rangle_+$ for the semi-group
generated by $\supp(\mu)$, and we shall refer to this as the
semi-group support of $\mu$.

\begin{theorem} \label{theorem:2} %
Let $L$ be a finitely generated complete subgroup of $\MCG(S_g)$, then
for any finitely supported probability distribution $\mu$, whose
semi-group support $\langle \supp(\mu) \rangle_+$ is equal to $L$, the
set of elements which yield hyperbolic manifolds of Heegaard genus
equal to $g$ is exponentially large.
\end{theorem}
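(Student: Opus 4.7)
The plan is to recast both conditions --- hyperbolicity and Heegaard genus exactly $g$ --- as a single condition on the \emph{splitting distance} (in the sense of Hempel) of the Heegaard decomposition determined by $\phi$, and then to show that this distance grows linearly along the random walk, with exponentially small probability of deviation.

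Fix the standard genus-$g$ Heegaard splitting of $S^3$ with handlebodies $H^{\pm}$, and let $\vD^{\pm} \subset \vC(S_g)$ be the corresponding disc sets. For $\phi \in \MCG(S_g)$, the splitting distance of $M(\phi)$ is
\[
\dsp(\phi) = d_{\vC(S_g)}(\vD^-, \phi \vD^+).
\]
By Hempel's criterion together with geometrization, $\dsp(\phi) \ge 3$ forces $M(\phi)$ to be irreducible and atoroidal, hence hyperbolic. By a theorem of Scharlemann--Tomova, $\dsp(\phi) > 2g$ forces the given splitting to be of minimal genus, so the Heegaard genus of $M(\phi)$ equals $g$. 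It therefore suffices to prove that $\{\phi \in L : \dsp(\phi) \le 2g\}$ is exponentially small with respect to $\mu$.

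By Masur--Minsky, $\vC(S_g)$ is Gromov hyperbolic and the disc sets $\vD^{\pm}$ are quasi-convex subsets. Completeness of $L$ means that its limit set on $\partial \vC(S_g)$ is the full boundary; in particular $L$ acts non-elementarily on $\vC(S_g)$ and contains independent pseudo-Anosov elements. Combined with the finite support of $\mu$ and the hypothesis $\langle \supp(\mu) \rangle_+ = L$, this places us in the setting of random walks on a non-elementary action on a Gromov hyperbolic space. The work of Maher, and the more general framework of Maher--Tiozzo, then gives linear progress with exponential control on deviations: there exist constants $c, C > 0$ and a vertex $v_0 \in \vC(S_g)$ with
\[
\P\bigl( d_{\vC(S_g)}(v_0, w_n v_0) \le cn \bigr) \le C e^{-cn}.
\]

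The main technical step, and the principal obstacle I anticipate, is to upgrade linear progress of $d(v_0, w_n v_0)$ to linear growth of $\dsp(w_n) = d_{\vC(S_g)}(\vD^-, w_n \vD^+)$ with the same form of exponential deviation estimate. The idea is to use quasi-convexity of the two disc sets together with the nearest-point projection estimates characteristic of Gromov hyperbolic geometry: if $\dsp(w_n) \le 2g$, then any geodesic realizing the distance between $\vD^-$ and $w_n \vD^+$ pins $w_n v_0$ inside a bounded neighborhood of a fixed quasi-convex subset of $\vC(S_g)$, and the probability of such trapping is controlled by exponential decay of the Gromov product of $w_n v_0$ with suitable points of $\partial \vC(S_g)$, as in \cite{M3}. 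The delicate point is that $\mu$ is only assumed to have semi-group support generating $L$, which rules out symmetric arguments and forces one to rely throughout on one-sided drift estimates; extending these estimates from the full mapping class group to finitely generated complete subgroups is precisely the content of the theorem.
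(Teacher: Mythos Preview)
Your reduction to splitting distance via Hempel--Kobayashi--Perelman and Scharlemann--Tomova matches the paper exactly, and you correctly isolate the main technical step: upgrading linear progress of $d(v_0, w_n v_0)$ to linear growth of $d(\vD^-, w_n \vD^+)$ with exponential deviation bounds. However, your proposed mechanism for this upgrade does not work, and this is where the real content of the theorem lies.

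The claim that ``if $\dsp(w_n) \le 2g$ then $w_n v_0$ is pinned inside a bounded neighbourhood of a fixed quasi-convex subset'' is false. The disc sets are quasiconvex but have \emph{infinite diameter}; their limit sets in $\partial\vC(S_g)$ are large (uncountable, nowhere dense) sets, not single points. Small splitting distance only says that \emph{some} point of $w_n\vD^+$ is near $\vD^-$, not that $w_n v_0$ is; and even if $w_n v_0$ were close to $\vD^-$, this cannot be detected by a Gromov product with any fixed boundary point or finite collection of such. Concretely, elements of the handlebody group for $\vD^+$ give $\dsp = 0$ while moving $v_0$ arbitrarily far. The shadow-decay estimates you invoke control proximity to bounded sets or to geodesic rays, not to unbounded quasiconvex sets, so linear progress for $d(v_0, w_n v_0)$ alone cannot exclude the walk tracking along $\vD^-$. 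This is exactly why the earlier result of Maher gave only $\dsp(w_n)\to\infty$ in probability, without exponential decay.

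The paper supplies two ingredients you are missing. First, a theorem of Kerckhoff on train tracks: there is a \emph{finite} family of maximal recurrent train tracks such that for \emph{every} disc set $g^{-1}\vD$, at least one of them carries only curves disjoint from that disc set. Combined with shadow estimates, this gives a uniform lower bound $\e>0$ on the probability that the walk, starting anywhere, moves a definite distance away from the (arbitrary, varying) disc set in a bounded number of steps. Second, a stochastic-domination comparison with an explicit Markov chain on $\N_0$: the local estimates (Kerckhoff near the disc set, shadow decay far from it) dominate the transition kernels of a chain whose spectral radius is strictly less than $1$, yielding exponential decay for $\P(d(\vD, w_n x_0)\le Ln)$. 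Only after this is established does the coarse-geometric argument you sketch (splitting the walk at its midpoint and bounding a Gromov product) finish the job. Without the Kerckhoff input, there is no uniform escape estimate, and without the Markov-chain comparison, the local estimates do not assemble into the global exponential bound.
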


This result is new even for $L = \MCG(S_g)$ or the Torelli group. In
fact, we prove a more general result, which includes the subgroup $H$
as before, which need not be complete.

The real work in proving Theorem 2 is to control the Heegaard
splitting distance. It is known that if the Heegaard distance of $\phi
\in \MCG(S_g)$ is at least 3 then $M(\phi)$ is hyperbolic, by work of
Kobayashi \cite{kobayashi}, Hempel \cite{hempel} and Perelman
\cite{morgan-tian}, and if the spitting distance is at least $2g+1$
then $M(\phi)$ has Heegaard genus exactly $g$, by work of Scharlemann
and Tomova \cite{st}. In fact, we show that the Heegaard splitting
distance of the random $3$-manifold grows linearly with exponential
decay. Results announced by Brock and Souto \cite{brock-souto} would
then imply that the manifolds obtained in Theorem \ref{theorem:main}
have arbitrarily large volume.

\subsection{Acknowledgments}

The authors are grateful to Martin Bridson, Nathan Dunfield, Benson
Farb, Alexander Holroyd, Justin Malestein and Yair Minksy for useful
discussions.  We acknowledge support by ERC, NSF and ISF. The second
author was supported by PSC-CUNY award TRADB-45-17 and Simons
Foundation grant CGM 234477. The third author thanks GARE network and
the warm hospitality of Hebrew University.

\section{Proof of the main theorem}

Before starting the proof we give some background and fix notation.
Let $S_g$ be a closed orientable surface of genus $g$.  We shall write
$\MCG(S_g)$ for the mapping class group of $S_g$, which is the group
of all orientation preserving homeomorphisms of $S_g$ up to
isotopy. We shall write $\mathcal{C}(S_g)$ for the curve complex of
$S_g$, which is a simplicial complex, whose vertices are given by
isotopy classes of simple closed curves, and whose simplices are
spanned by collections of disjoint simple closed curves. A handlebody
$U$ is a compact $3$-manifold with boundary, homeomorphic to a regular
neighbourhood of an embedded graph in $\R^3$, and handlebodies are
classified up to homeomorphism by the genus of their boundary
surfaces.  Given an identification of the surface $S_g$ with the
boundary of a genus $g$ handlebody, the handlebody group is the
subgroup of the mapping class group consisting of those mapping class
group elements which extend over the handlebody, i.e. they arise as
restrictions of self-homeomorphisms of the handlebody. The disc set
$\mathcal{D}$ is defined to be the subset of the curve complex
$\mathcal{C}(S_g)$ consisting of all simple closed curves in $S_g$
which bound discs in the handlebody.  A genus $g$ Heegaard splitting
of a closed orientable $3$-manifold $M$ is an embedding of $S_g$ in
$M$ which divides $M$ into two handlebodies $U$ and $U'$, and we shall
denote their corresponding discs sets by $\mathcal{D}$ and
$\mathcal{D}'$ respectively. Any two handlebodies of the same genus
are homeomorphic, so for any pair of discs sets $\mathcal{D}$ and
$\mathcal{D}'$, corresponding to two identifications of $S_g$ with the
boundaries of the handlebodies, there is a mapping class group element
$h$ such that $\mathcal{D}' = h\mathcal{D}$. The mapping class group
element is not unique, but any two choices differ by composition with
elements of the handlebody group.

In particular, a Heegaard splitting of $\BS^3$ is an embedded copy of
the surface $S_g$ in the standard 3-sphere, separating the sphere into
two genus $g$ handlebodies. In fact, for $\BS^3$, such a splitting is
unique up to isotopy, and from now on we shall fix a pair of discs
sets $\mathcal{D}$ and $\mathcal{D'}$, and a mapping class group
element $h_{\BS^3}$ with $h_{\BS^3}\mathcal{D} = \mathcal{D'}$,
arising from a genus $g$ Heegaard splitting of $\BS^3$. Given an
element $\phi$ of the mapping class group, we may consider the
Heegaard splitting obtained by composing the gluing map $h_{\BS^3}$
with $\phi$, i.e. the Heegaard splitting with disc sets $\mathcal{D}$
and $\phi h_{\BS^3} \mathcal{D}$, and we shall just write $M(\phi)$
for the resulting $3$-manifold. The $3$-manifold $M(\phi)$ is an
integral homology sphere if and only if $\phi$ lies in the Torelli
subgroup $\vT$ of the mapping class group $\MCG(S_g)$, i.e. the
subgroup which acts trivially on the homology of the surface.

We will often consider the orbit map from the mapping class group
$\MCG(S_g)$ to the curve complex $\mathcal{C}(S)$, which sends $\phi
\mapsto \phi x_0$, for some choice of basepoint $x_0$. Our particular
choice of disc sets $\mathcal{D}$ and $\mathcal{D}'$ intersect,
i.e. in the unique genus $g$ Heegaard splitting of $S^3$ there is a
curve on the Heegaard surface which bounds a disc on both sides, and
it will be convenient for us to choose a basepoint $x_0$ which lies in
both $\mathcal{D}$ and $\mathcal{D}'$. We remark that this is for
convenience only, as the argument works for any other choice of
basepoint, possibly with slightly different constants. Furthermore,
the argument works for any other choice of initial disc sets,
$\mathcal{D}$ and $\mathcal{D}'$, again possibly with different
constants, as long as the Heegaard splitting corresponding to the two
disc sets is an integer homology sphere; this is equivalent to
starting the random walk at some other element of the mapping class
group, instead of the usual choice of the identity element.

Let $\mu$ be a probability distribution on $\MCG(S_g)$ with finite
support. The random walk on $\MCG(S_g)$ generated by $\mu$ is the
Markov chain with the transition probability from $x$ to $y$, denoted
$p(x, y)$, equal to $\mu(x^{-1}y)$.  We will always assume that we
start at the identity at time zero, and we will write $w_n$ for the
location of the random walk at time $n$.  The probability distribution
$\mu$ need not be symmetric, but we shall always assume that the
semi-group generated by the support of $\mu$ is a subgroup of the
mapping class group. Taking the probability of the random walk landing
in a particular set gives rise to a way of measuring the size of
subsets of our group: we say that a subset $E \subseteq G$ is
\emph{exponentially small} if there exists numbers $K, c < 1$ such
that for every $n \in \N$,
$$ \P(w_n \in E) \leq K c^n.$$
We will call a subset of $G$ \emph{exponentially large} if its
complement is exponentially small.

Given $g \in \MCG(S_g)$ which is pseudo-Anosov, let $\vL^s(g)$ and
$\vL^u(g)$ denote the stable and unstable laminations of $g$.  A
finitely generated subgroup $G$ of $\MCG(S_g)$ is said to be
\emph{sufficiently large} if it contains two pseudo-Anosov elements
$\vp, \ \psi$ with distinct stable and unstable laminations, namely
$$\{ \vL^s(\vp), \ \vL^u(\vp) \} \cap \{ \vL^s(\psi), \ 
\vL^u(\psi) \} = \phi.$$
The subgroup $G$ is \emph{complete} if the endpoints of its
pseudo-Anosov elements are dense in $\vPML$, Thurston's boundary for
Teichm\"uller space.  Ivanov \cite{ivanov} showed that an infinite
normal subgroup contains a pseudo-Anosov element, and the orbit under
$\MCG(S_g)$ of any point in $\vPML$ is dense, so every infinite normal
subgroup of $\MCG(S_g)$ is complete and sufficiently large.

Let $\vK$ be the subgroup of $\vT$ generated by Dehn twists along
separating curves. As $\vK$ is a normal subgroup, it is complete.  In
fact, as discussed in Farb and Margalit \cite{FM}*{Chapter 6}, the
group $\vK$ coincides with the second Torelli group, $\vT^2(S_g)$,
also known as the Johnson kernel, defined as the kernel of the action
of $\vT(S_g)$ on the quotient $\Gamma /[\Gamma, \Gamma']$ where
$\Gamma = \pi_1(S_g)$ and $\Gamma' = [\Gamma, \Gamma]$ is the
commutator subgroup.

The Casson invariant $\lambda$ of closed orientable integral homology
spheres takes values in $\Z$. Casson defined the invariant in terms of
$SU(2)$ representations arising from a Heegaard splitting of the
manifold, see for example Akbulut and McCarthy \cite{am}. The Casson
invariant of $\BS^3$ is equal to $0$ and the Casson invariant of the
Poincar\'e homology sphere is equal to $1$. As the sign of
$\lambda(M)$ changes if you reverse the orientation on $M$, and the
Casson invariant is additive under connect sum, taking connect sums of
Poincar\'e homology spheres with appropriate orientation gives
examples of manifolds with any integral value for their Casson
invariant, though these manifolds will not be hyperbolic, and their
Heegaard genera are unbounded.

We shall use the following property of the Casson invariant which is
due to Morita \cite{Mo}.

\begin{theorem}(Morita \cite{Mo})
The Casson invariant
$\lambda: \vK \rightarrow \Z$ is a homomorphism.
\end{theorem}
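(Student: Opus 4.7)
The plan is to follow Morita's strategy from \cite{Mo}. Set $\lambda^\ast(\phi) = \lambda(M(\phi))$ for $\phi \in \vT$; this is well-defined since $M(\phi)$ is an integral homology sphere whenever $\phi$ lies in the Torelli group, and $\lambda$ is a topological invariant of the closed manifold (insensitive to the handlebody-group ambiguity in the Heegaard gluing noted earlier in the excerpt). The goal is then to control the coboundary
\[
c(\phi,\psi) \,=\, \lambda^\ast(\phi\psi) - \lambda^\ast(\phi) - \lambda^\ast(\psi)
\]
on $\vT \times \vT$, and show that although $c$ is not identically zero on $\vT$, it does vanish once both arguments are restricted to $\vK$.

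The central technical input is Morita's explicit formula for $\lambda^\ast$ on $\vT$. Starting from Casson's original description of $\lambda$ via intersections of $SU(2)$ character varieties of the two handlebodies bounding the Heegaard surface, Morita rewrites the defect $c$ in surface-theoretic terms and shows that it is controlled by the first Johnson homomorphism $\tau_1 \colon \vT \to \Lambda^3 H_1(S_g;\Z)/H_1(S_g;\Z)$. Schematically, $\lambda^\ast$ on $\vT$ decomposes as a genuine homomorphism plus a ``Johnson piece'' whose failure of additivity is a cocycle pulled back from $\tau_1$. The heart of the argument is this formula: it translates the intrinsically 3-dimensional Casson invariant into data visible on the surface and on its homology.

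Once Morita's formula is in hand, the conclusion follows quickly. Since $\vK$ is by definition the Johnson kernel $\ker \tau_1$ (as recalled in the excerpt via Farb--Margalit), both $\tau_1(\phi)$ and $\tau_1(\psi)$ vanish when $\phi,\psi \in \vK$, and hence the Johnson contribution to $c(\phi,\psi)$ vanishes identically. What remains is the genuine homomorphism piece, so $\lambda^\ast|_\vK \colon \vK \to \Z$ is a group homomorphism. The main obstacle is the first step: producing Morita's formula requires a careful analysis of how the $SU(2)$ representation varieties of handlebodies behave under composition of gluing maps, and a nontrivial identification of the resulting correction term with a cocycle built from $\tau_1$. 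The passage from that formula to the conclusion on $\vK$, by contrast, is essentially immediate from the defining property of the Johnson kernel.
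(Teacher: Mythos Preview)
The paper does not prove this theorem at all: it is stated as a result of Morita and simply cited as \cite{Mo}, with no argument given. So there is nothing in the paper to compare your proposal against beyond the bare citation.

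Your sketch is a reasonable high-level summary of Morita's actual argument: the Casson invariant on the Torelli group fails to be additive by a cocycle governed by the first Johnson homomorphism $\tau_1$, and since $\vK = \ker \tau_1$ this defect vanishes on $\vK$. That is indeed the shape of Morita's proof. But note that what you have written is an outline rather than a proof: the substantive content (Morita's formula expressing $\lambda^\ast$ in terms of a homomorphism plus a Johnson correction, and the identification of the coboundary $c$ with a cocycle pulled back via $\tau_1$) is asserted rather than derived, and you correctly flag that producing this formula is the hard part. For the purposes of this paper, which treats Morita's theorem as a black box, your level of detail already exceeds what is required; if the intent were to actually supply a proof, you would need to reproduce or reference the explicit computations in \cite{Mo}.
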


In fact, this homorphism is surjective, and we show this by
constructing an explicit element of the Johnson kernel $\vK$ which
maps to a generator of $\Z$.

\begin{lemma} \label{lemma:tau} %
The Casson invariant homomorphism $\lambda \colon \vK \rightarrow \Z$ is
surjective.
\end{lemma}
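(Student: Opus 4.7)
The plan is to produce an explicit element $\tau \in \vK$ with $\lambda(\tau) = \pm 1$. Since by Morita's theorem $\lambda \colon \vK \to \Z$ is a homomorphism, its image is a subgroup of $\Z$; producing a single element mapping to $\pm 1$ forces the image to be all of $\Z$.

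The natural target is the Poincar\'e homology sphere $P$, which has Heegaard genus $2$ and $\lambda(P) = 1$. For $g = 2$ I would realize $P = M(\tau)$ for an explicitly constructed $\tau \in \MCG(S_2)$. Since $P$ is an integral homology sphere, $\tau$ automatically lies in $\vT(S_2)$. The crucial step is to arrange that $\tau$ in fact lies in the Johnson kernel $\vK(S_2) \subseteq \vT(S_2)$, i.e.\ that it can be expressed as a product of Dehn twists along separating simple closed curves on $S_2$. A direct approach is to start from a concrete surgery presentation of $P$ (for example, $\pm 1$ surgery on a trefoil) and rewrite the resulting gluing as a product of separating Dehn twists on the Heegaard surface. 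A cleaner alternative is to invoke Morita's explicit formulas for the value of $\lambda$ on Dehn twists about separating curves and exhibit, by direct computation on $S_2$, a product of such twists with Casson invariant $\pm 1$.

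For $g > 2$, the construction stabilizes in the obvious way: extend the genus-$2$ element $\tau$ by the identity on the additional handles to obtain an element of $\MCG(S_g)$. The extended element still lies in $\vK(S_g)$, and the resulting Heegaard splitting is just the genus-$g$ stabilization of the genus-$2$ splitting of $P$, so $M(\tau) = P$ and $\lambda(\tau) = 1$.

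The main obstacle is the factorization step: producing $\tau$ as an honest product of separating-curve Dehn twists, rather than merely as some element of the Torelli group. The Johnson kernel is strictly smaller than the Torelli group, so this does not follow from general principles about gluings producing homology spheres, and an explicit construction is required. This is presumably why the authors formulate the lemma as exhibiting a specific element, labelled $\tau$, rather than deriving surjectivity from an abstract property of $\vK$.
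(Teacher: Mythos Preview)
Your plan is essentially the paper's approach, and the ``main obstacle'' you identify --- factoring the gluing into separating-curve Dehn twists --- is precisely the point the paper's argument dissolves rather than confronts. The key observation you are missing is that the trefoil $\kappa$ can itself be embedded as a \emph{separating} simple closed curve $C$ on the standard genus-$2$ Heegaard surface of $S^3$. Once you see this, the element $\tau$ is just the single Dehn twist along $C$: it lies in $\vK(S_2)$ by definition, and there is no factorization to perform. The standard fact that $(1/m)$-surgery on a knot embedded in a Heegaard surface is realized by $m$ Dehn twists along that knot then gives $M(\tau^m) = S^3 + \tfrac{1}{m}\kappa$, and the Casson surgery formula yields $|\lambda(\tau^2) - \lambda(\tau)| = 1$. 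Since $\lambda$ is a homomorphism this is $|\lambda(\tau)| = 1$, exactly what you wanted.

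Your stabilization to higher genus is the same as the paper's. The only real difference in presentation is that the paper phrases the conclusion as ``consecutive integers lie in the image'' rather than directly computing $\lambda(\tau) = \pm 1$, but these are equivalent once $\lambda$ is known to be a homomorphism. Your alternative of invoking Morita's explicit formulas would also work, but the separating-trefoil picture is more direct and avoids any computation beyond the surgery formula.
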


Given a knot $\kappa$ in $\BS^3$, we will write $\BS^3 + (p/q) \kappa$
for the $3$-manifold obtained by $(p/q)$-Dehn surgery along $\kappa$
in $\BS^3$.

\begin{proof}
By the surgery formula for the Casson invariant, see for example
\cite{Sa}*{Section 3.2.8}, for Dehn surgeries on the trefoil knot
$\kappa$ we have
$$| \lambda (\BS^3 + \frac{1}{m+1}\kappa) -
\lambda(\BS^3+\frac{1}{m}\kappa) |= 1, $$
independent of the integer $m$.

Now we can embed $\kappa$ as a separating curve $C$ in the closed
genus two surface as follows:

\begin{figure}[htbp]
\label{trefoil}
\centering
\includegraphics[scale=0.90]{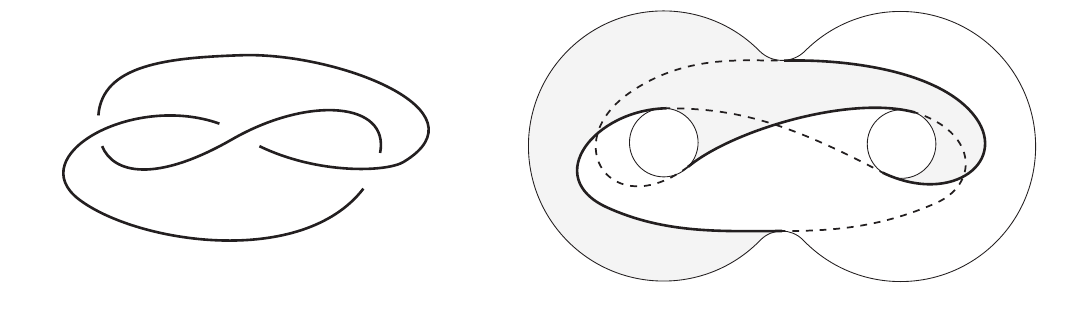}
\caption{Embedded separating trefoil}
\end{figure}

It is a standard fact (see, for example, \cite{Sa}*{Section 3.2.5})
that the $\BS^3 + \frac{1}{m}\kappa$ is the same as taking the
standard embedding of $S_2$ in $\BS^3$ and gluing the two handlebodies
with $m$ Dehn twists along the embedded copy of $\kappa$.

Let $\tau \in \vK$ be the Dehn twist along $C$, so we
have $$|\lambda(\tau^2) - \lambda(\tau)| = 1.$$ This finishes the
proof when $g=2$.

As the Casson invariant is additive under connected sums, hence for
any $g>2$ we may simply add handles on the standard embedding of $S_2$
away from the embedding of $\kappa$ and obtain the same manifold by
applying $m$ Dehn twists along $\kappa$.

Since $\vK$ is the group generated by Dehn twists along separating
curves in $S_g$, we deduce that for all $m$, $\BS^3+\frac{1}{m}\kappa$
are manifolds obtained as $M(\phi)$ for some $\phi \in \vK$. We
conclude there are always consecutive integers in the image
$\lambda(\vK)$, i.e. $\lambda|_{\vK}$ is surjective.
\end{proof}

Given subsets $A$ and $B$ of the curve complex, we will write $d(A,
B)$ for the minimum distance $d(a, b)$ between elements $a \in A$ and
$b \in B$.  The Heegaard splitting distance of a Heegaard splitting,
as defined by Kobayashi \cite{kobayashi2} and Hempel \cite{hempel}, is
the distance between the two disc sets determined by the handlebodies
of the splittings. In our notation, given a mapping class group
element $\phi$, the corresponding $3$-manifold $M(\phi)$ has Heegaard
splitting distance
\[ \dsp(M(\phi)) = d(\mathcal{D}, \phi h_{S^3} \mathcal{D} ). \]

In \cite{M1}, it was shown that for random walks on finitely generated
complete subgroups of the mapping class group, the splitting distance
$\dsp(M(w_n))$ grows linearly with $n$, i.e. there is a number $L > 0$
such that $P( \dsp(M(w_n)) \ge Ln ) \to 1$ as $n \to \infty$.
Although $\vK$ is complete, it is not expected to be finitely
generated, so we need a stronger version of this result which works
for subgroups, which need not be complete, and which furthermore shows
that the probability tends to $1$ exponentially fast. We say a
sequence of random variables $\{ X_n \}_{n \in \N}$ \emph{grow
  linearly with exponential decay} if there are numbers $K, L > 0$ and
$c < 1$ such that
\[  \P( X_n \le Ln ) \le K c^n, \]
for all $n$.  

\begin{theorem} \label{Hg} %
For any complete subgroup $G$ of $\MCG(S_g)$, there is a finitely
generated subgroup $H < G$, such that for any finitely supported
probability distribution $\mu$, whose semi-group support $\langle
\supp(\mu) \rangle_+$ is a subgroup containing $H$, the Heegaard
splitting distance $\dsp(M(w_n))$, of a random walk of length $n$
generated by $\mu$, grows linearly with exponential decay, i.e. there
are numbers $K, L > 0$ and $c < 1$ such that
\[ \P( \dsp(M(w_n)) \le Ln ) \le K c^n. \]
\end{theorem}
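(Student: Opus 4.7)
The plan is to construct $H$ as a Schottky-type free subgroup of $G$ and then exploit the hyperbolicity of the curve complex together with a large-deviation argument. First, using completeness of $G$ and the fact that pseudo-Anosov endpoints are dense in $\vPML$, I would choose two pseudo-Anosov elements $\vp_1, \vp_2 \in G$ whose four fixed laminations all lie at positive distance from the limit sets of $\mathcal{D}$ and $h_{\BS^3}\mathcal{D}$ in $\vPML$ (both are proper closed subsets by Masur's theorem on disc sets). After raising $\vp_1, \vp_2$ to sufficiently high powers, a ping-pong argument in $\vPML$ arranges $H := \langle \vp_1, \vp_2 \rangle$ to be free of rank two, acting loxodromically on the Gromov-hyperbolic curve complex $\mathcal{C}(S_g)$ with a uniform lower bound on translation lengths.

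Next I would establish linear progress with exponential decay for the orbit $w_n x_0$ in $\mathcal{C}(S_g)$. Because $\mu$ has finite support and $\langle \supp(\mu) \rangle_+ \supseteq H$, there exists a fixed integer $N$ and a probability $p_0 > 0$ such that any given block of $N$ consecutive increments of the walk realizes a prescribed element of $H$. Subordinating the walk to these ``good blocks'' yields an induced sub-walk on $H$ whose increments are i.i.d.\ and translate a common quasi-axis by a definite amount. A Chernoff--Hoeffding estimate on the number of good blocks in the first $n$ steps, together with the Schottky alignment which guarantees that translations accumulate in one direction modulo bounded backtracking, produces constants $K, L > 0$ and $c < 1$ with $\P(d(x_0, w_n x_0) \le Ln) \le Kc^n$.

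The final step converts curve-complex progress into a Heegaard-distance estimate. By Masur--Minsky the disc sets $\mathcal{D}$ and $h_{\BS^3}\mathcal{D}$ are quasi-convex in $\mathcal{C}(S_g)$, and our choice of $H$ forces the random walk's direction of escape to stay uniformly away from their limit sets in $\vPML$. A standard bounded-projection argument then shows that the nearest-point projection of $w_n x_0$ to $\mathcal{D}$, and symmetrically the projection of $x_0$ to $w_n h_{\BS^3}\mathcal{D}$, remain uniformly bounded off an exponentially small exceptional set. Combining these with the previous step gives
\[ \dsp(M(w_n)) \;=\; d(\mathcal{D}, w_n h_{\BS^3}\mathcal{D}) \;\ge\; L' n \]
with exponential decay on the complement, for any $L' < L$.

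The main obstacle is obtaining exponential decay rather than mere convergence to $1$ in probability, as in Maher's earlier result: one must preclude rare excursions of the walk from wiping out the accumulated progress. The Schottky structure of $H$ enables a large-deviation bound on the number of good blocks, which is the crucial input. A delicate point is handling the possible asymmetry of $\mu$ and the fact that $\langle \supp(\mu) \rangle_+$ may be much larger than $H$; however, the subordination argument only requires detecting when consecutive blocks of length $N$ spell out chosen elements of $H$, so it is robust to the presence of extra generators and to non-symmetry.
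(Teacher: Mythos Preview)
There is a genuine gap in your step 3, and it stems from how you construct $H$. You choose pseudo-Anosovs $\vp_1, \vp_2$ whose fixed laminations avoid the limit sets of $\mathcal{D}$ and $h_{S^3}\mathcal{D}$, and then assert that ``our choice of $H$ forces the random walk's direction of escape to stay uniformly away from their limit sets.'' But the random walk is on $\langle \supp(\mu) \rangle_+$, which could be all of $\MCG(S_g)$; the only hypothesis is that this group \emph{contains} $H$. The presence of $H$ in the support places no constraint on the direction of escape of the full walk. Concretely, if at time $m$ the walk is at $g$ with $g x_0$ close to $\mathcal{D}$, right-multiplying by a power of $\vp_i$ moves $g x_0$ toward $g\,\mathcal{L}^s(\vp_i)$, which can perfectly well lie in the limit set of $\mathcal{D}$ even though $\mathcal{L}^s(\vp_i)$ does not. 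So your $H$ does not provide a uniform escape mechanism from the disc set, and there is no reason the nearest-point projection of $w_n x_0$ to $\mathcal{D}$ should be bounded with exponentially small failure probability. Since $\mathcal{D}$ has infinite diameter, linear growth of $d(x_0, w_n x_0)$ by itself says nothing about $d(\mathcal{D}, w_n x_0)$; the walk could track along $\mathcal{D}$ forever while still making linear progress from $x_0$.

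The paper's proof is organized around exactly this difficulty. Rather than choosing $H$ to avoid two fixed disc sets, it uses a result of Kerckhoff: there is a single maximal recurrent train track $\tau$ such that, for \emph{any} handlebody structure on $S_g$, a bounded number of splittings of $\tau$ yields a train track carrying no disc-bounding curves. This gives a finite list of train tracks $\{\tau_i\}$, hence a finite list of shadow sets $S'_i \subset C(\tau_i)$, with the property that every translate $g^{-1}\mathcal{D}$ is disjoint from at least one $C(\tau_i)$. The subgroup $H$ is then generated by pseudo-Anosovs in $G$ whose attracting laminations lie in the $S'_i$, guaranteeing that from \emph{any} position $g$ there is a definite probability $\e>0$ of moving distance at least $R$ away from $\mathcal{D}$ in one block of steps. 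This local escape estimate near the disc set, together with an exponential-decay-of-shadows estimate far from it, is fed into a stochastic domination comparison with an explicit Markov chain on $\N_0$ to show directly that $d(\mathcal{D}, w_n x_0)$ grows linearly with exponential decay. The splitting-distance estimate then follows by cutting the walk at time $\lfloor n/2\rfloor$, applying this to each half (for $\mathcal{D}$ and for $h_{S^3}\mathcal{D}$ with the reflected walk), and using a coarse-geometry estimate for the distance between two quasiconvex sets. The Kerckhoff input is precisely what your Schottky construction lacks: uniformity over all translates of the disc set, not just the two you started with.
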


Any sufficiently large normal subgroup of the mapping class group is
complete, so in the result above $G$ may be taken to be the entire
$\MCG(S_g)$, the Torelli group, or the Johnson kernel $\vK$.  If $G$
is finitely generated, we may choose the support of $\mu$ to generate
$G$.  The mapping class group $\MCG(S_g)$, is finitely generated, as
shown by Dehn \cite{dehn} and Lickorish \cite{lickorish}, as is the
Torelli group, for $g > 2$, as shown by Johnson \cite{johnson}. For
$g=2$, the Torelli group is not finitely generated, as shown by
McCullough and Miller \cite{mcc-miller}, and it is not currently known
whether or not $\vK$ is finitely generated.

We postpone the proofs of Theorem \ref{Hg} to the later sections.

Kobayashi \cite{kobayashi} and Hempel \cite{hempel} showed that if the
splitting distance is greater than $2$ then the $3$-manifold is
irreducible, atoroidal and not Seifert-fibered, and so is hyperbolic
by Perelman's proof of Thurston's geometrization conjecture
\cite{morgan-tian}. Scharleman and Tomova \cite{st} showed that if the
Heegaard splitting distance $\dsp(M(h))$ is greater than $2g$, then the
Heegaard genus of the resulting $3$-manifold is equal to $g$. As the
set $H_{2g}$ of elements in $H$ that induce $3$-manifolds of splitting
distance at most $2g$ is exponentially small in $H$, this implies that
its complement, the set $(H_{2g})^c$, all of whose elements correspond
to $3$-manifolds which are hyperbolic and have Heegaard genus equal to
$g$, is exponentially large in $H$.

Now we are ready to put the parts together and obtain the main theorem.

\begin{proof}(of Theorem \ref{theorem:main}) %
Consider $\vK$, which is a complete subgroup of $\MCG(S_g)$. By
Theorem \ref{Hg}, there is a finitely generated subgroup $H_0 <
\MCG(S_g)$ such that for any finitely supported probability
distribution $\mu$ whose support generates a subgroup $H$ containing
$H_0$, the splitting distance $\dsp(M(w_n))$ grows linearly with
exponential decay.

We shall consider the subgroup $H$ generated by $\{ H_0 \cup \tau \}$,
where $\tau \in \vK$ is the element previously constructed in Lemma
\ref{lemma:tau}, for which $\lambda(\tau) = 1$. This ensures that the
homomorphism $\lambda \colon H \to \Z$ is surjective. The subgroup $H$
is finitely generated, as $H_0$ is finitely generated. We may now
choose a symmetric random walk, supported on a finite generating set
for $H$, and the image of this random walk under the homorphism
$\lambda$ is a symmetric finite range random walk on
$\Z$. Furthermore, we may assume that the image of the random walk on
$\Z$ is irreducible, for example by adding an element to the
generating set which maps to $0$.

We shall write $H_{2g}$ to denote the subset of $H$ consisting of
group elements which give rise to Heegaard splittings of distance less
than or equal to $2g$.  Suppose some $k \in \Z$ is not achieved as the
Casson invariant of a hyperbolic homology sphere with Heegaard genus
$g$, then in particular we have
\[ \lambda^{-1}(k) \subseteq H_{2g}. \]
The set on the right hand side is exponentially small by Theorem
\ref{Hg}.

Now since $\lambda$ is a homomorphism on $H$, it projects the random
walk on $H$ to an irreducible Markov process on $\Z$, and for a
symmetric finite range random walk, at step $n$,
$$ \P( \lambda(w_n) = k) \ge c / \sqrt{n} $$
with some small number $c$, see for example Lawler and Limic
\cite{LL}*{Section 2.1}. This contradicts the assumption of
$\lambda^{-1}(k)$ being exponentially small.
\end{proof}

Let us now sketch the proof of Theorems \ref{Hg} and \ref{theorem:2}.
We now give a brief overview of the argument showing that the Heegaard
splitting distance $\dsp(M(w_n))$ grows linearly with exponential
decay.

Let $x_0$ be a basepoint in the complex of curves $\mathcal{C}(S_g)$,
and consider the image of the random walk under the orbit map $w_n
\mapsto w_n x_0$.  A key intermediate result is to show that for a
random walk on a suitably chosen subgroup $H$ of $\vK$, the distance
of the sample path from the disc set, $d(\mathcal{D}, w_n x_0)$, grows
linearly in $n$ with exponential decay, and we now give an outline of
the argument for this result, omitting certain technical details.

We shall write $\N_0$ to denote the set of non-negative integers.
Consider the sequence of random variables $X_n = d(\mathcal{D}, w_n
x_0)$ with values in $\N_0$. This is not a Markov chain, but the
conditional probabilities
%
%
\[ \P( X_{n+1} = j \mid w_n = \phi ) \]
are well-defined. We shall show the following pair of ``local''
estimates for these conditional probabilities. We state approximate
versions of the properties here, and precise versions later on.

\begin{enumerate}

\item If the sample path location $w_n x_0$ is reasonably far from the
disc set, then the probability that after $m$ steps you have moved a
distance $r$ closer to the disc set decays exponentially in $r$,
i.e. there is some $q < 1$ such that
  \[ \P( X_{n+m} = t - r \mid w_n = \phi \text{ with } d(\mathcal{D},
  \phi x_0) = t) \le q^{r+1}. \]
\item If the sample path location $w_n x_0$ is close to the disc set,
then there is a definite chance $\e > 0$ that after $m$ steps you have
moved a reasonably distance $r$ away from the disc set, i.e.
  \[ \P( X_{n+m} \ge r \mid w_n = \phi \text{ with } d(\mathcal{D}, \phi
  x_0) = 0 ) \ge \e.  \]

\end{enumerate}

We briefly indicate some of the details that need to be addressed. We
need to obtain the estimates above for all $n$ and some fixed but
suitably large $m$. It is also more convenient to work with a coarse
version of distance, in which we choose $X_n$ to be the integer part
of $d(\mathcal{D}, w_n x_0)/R$, for some reasonably large $R$, rather
than $d(\mathcal{D}, w_n x_0)$ itself.

In Section \ref{section:coarse} we review some basic results in coarse
geometry, before using these to show the first property in Section
\ref{section:local far}.  Then in Section \ref{section:train track} we
review some basic results about train tracks and shadow sets, and then
use these to show the second property in Section \ref{section:local
  near}. 

In Section \ref{section:distance from D} we show how to use the two
properties above to show that $d(\mathcal{D}, w_n x_0)$ grows
linearly, with exponential decay, and we now describe our approach.

The sequence of random variables $\{ X_n \}$ does not arise from a
Markov chain, but we can compare the distributions of the $\{ X_n \}$
with the distributions $\{ Y_n \}$ arising from a Markov chain on
$\N_0$, which never increases by more than one unit per step, and has
transition probabilities given by $p(0, 0) = 1 - \e$, and $p(t, t - r)
= q^{r+1}$, for $r \ge 0$. The first few vertices of this Markov chain
are illustrated in Figure \ref{pic:markov-chain}.

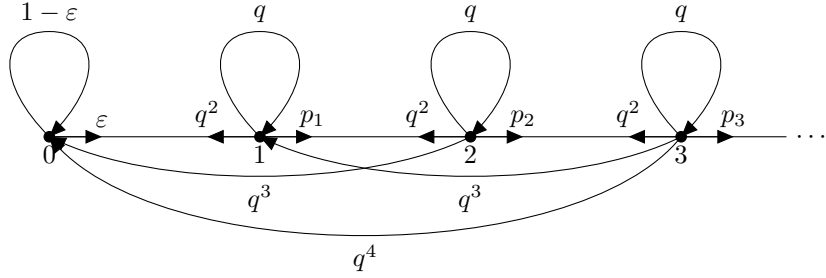
\begin{figure}[H] \begin{center}
\begin{tikzpicture}[scale=1.4]

\draw (0, 0) -- (7,0) node [right] {$\cdots$};

\filldraw[black] (0,0) circle (0.05cm) node [below] {$0$};
\filldraw[black] (2,0) circle (0.05cm) node [below] {$1$};
\filldraw[black] (4,0) circle (0.05cm) node [below] {$2$};
\filldraw[black] (6,0) circle (0.05cm) node [below] {$3$};

\draw[arrows=-triangle 45] (0,0) -- (0.5,0) node [above] {$\e$};
\draw[arrows=-triangle 45] (2,0) -- (1.5,0) node [above] {$q^2$};

\draw[arrows=-triangle 45] (0,0) .. controls (-0.5, 0.5) and (-0.5, 1) .. (0,1) node [above] {$1 - \e$}
                                 .. controls (0.5, 1) and (0.5, 0.5) .. (0, 0);

\begin{scope}[xshift=2cm]
\draw[arrows=-triangle 45] (0,0) -- (0.5,0) node [above] {$p_1$};
\draw[arrows=-triangle 45] (2,0) -- (1.5,0) node [above] {$q^2$};

\draw[arrows=-triangle 45] (2,0) .. controls (1.5, 0.5) and (1.5, 1) .. (2,1) node [above] {$q$}
                                 .. controls (2.5, 1) and (2.5, 0.5) .. (2, 0);

\draw[arrows=-triangle 45] (4,0) .. controls (3, -0.5) and (1, -0.5) .. (0, 0) node [midway, below] {$q^3$};

\end{scope}

\draw[arrows=-triangle 45] (4,0) .. controls (3, -0.5) and (1, -0.5) .. (0, 0) node [midway, below] {$q^3$};

\begin{scope}[xshift=4cm]
\draw[arrows=-triangle 45] (0,0) -- (0.5,0) node [above] {$p_2$};
\draw[arrows=-triangle 45] (2,0) -- (1.5,0) node [above] {$q^2$};

\draw[arrows=-triangle 45] (2,0) .. controls (1.5, 0.5) and (1.5, 1) .. (2,1) node [above] {$q$}
                                 .. controls (2.5, 1) and (2.5, 0.5) .. (2, 0);

\end{scope}

\draw[arrows=-triangle 45] (6,0) -- (6.5,0) node [above] {$p_3$};

\draw[arrows=-triangle 45] (2,0) .. controls (1.5, 0.5) and (1.5, 1) .. (2,1) node [above] {$q$}
                                 .. controls (2.5, 1) and (2.5, 0.5) .. (2, 0);

\draw[arrows=-triangle 45] (6,0) .. controls (5, -1.25) and (1, -1.25) .. (0, 0) node [midway, below] {$q^4$};

\end{tikzpicture}
\end{center} 
\caption{The Markov chain $(\N_0, P)$, where $p_i = 1  - q - q^2 - \cdots - q^{i+1}$.} 
\label{pic:markov-chain}
\end{figure}

Intuitively, it is always more likely that the random variables $\{
X_n \}$ move further to the right than the random variables $\{ Y_n
\}$ arising from the Markov chain. This means that the distribution of
each $Y_n$ will have greater weight on small values than the
distribution for the corresponding $X_n$, or more precisely
$F_{X_n}(t) \le F_{Y_n}(t)$ for all $t$, where $F_{X_n}$ and $F_{Y_n}$
are the cumulative probability functions for $X_n$ and $Y_n$
respectively.  This property is usually described by saying that the
random variables $X_n$ \emph{stochastically dominate} the random
variables $Y_n$, written as $Y_n \prec X_n$. A standard argument from
the theory of Markov chains shows that the random variables $Y_n$
arising from the Markov chain satisfies the linear progress with
exponential decay property that we require, and so this implies the
linear progress with exponential decay property for the $X_n$.

Finally, in Section \ref{section:splitting distance} we use some more
coarse geometry to extend this to show that $d(\mathcal{D}, w_n
h_{S^3} \mathcal{D})$ grows linearly with exponential decay.

\section{Coarse geometry} \label{section:coarse}

Let $(X, d)$ be a $\delta$-hyperbolic space, which need not be locally
compact.  Recall that the Gromov product of two points $y$ and $z$ in
$X$, with respect to a basepoint $x \in X$, is defined to be
\[ \gp{x}{y}{z} = \tfrac{1}{2}( d(x, y) + d(x, z) - d(y, z)
).  \]
This is equal to the distance from $x$ to a geodesic from $y$ to $z$
up to bounded additive error.  Given a basepoint $x \in X$, and a
number $R \ge 0$, we define the shadow set of a point $y \in X$, with
parameter $R$, to be
\begin{equation} \label{eq:shadow def} %
S_{x}(y, R) = \{ z \in X \mid \gp{x}{y}{z} \ge d(x, y) - R \}.  
\end{equation}

We start with an elementary observation concerning distances in
$\delta$-hyperbolic spaces. Suppose you travel from a point $x$ to a
point $y$ along a geodesic $\gamma$, and then from the point $y$ to
$z$ along a geodesic $\gamma'$. By thin triangles, the two geodesics
$\gamma$ and $\gamma'$ fellow travel for some distance near $y$,
before moving apart, and we can think of the length of the
fellow-travelling segments as measuring the overlap of the two
geodesics.

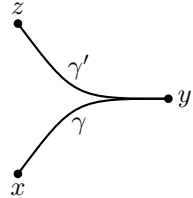
\begin{figure}[H] \begin{center}
\begin{tikzpicture}[scale=0.5]

\draw[thick] (0, 0) .. controls (1.5, 2) .. node [below] {$\gamma$} (4, 2);
\filldraw (0, 0) circle (0.1) node [below] {$x$};
\filldraw (4, 2) circle (0.1) node [right] {$y$};

\draw[thick] (4, 2) .. controls (1.5, 2) .. node [above] {$\gamma'$} (0, 4);
\filldraw (0, 4) circle (0.1) node [above] {$z$};

\end{tikzpicture}
\end{center} 
\caption{Overlapping geodesics} 
\label{pic:overlap}
\end{figure}

The total distance from $d(x, z)$ is equal to $d(x, y) + d(y, z)$
minus approximately twice this overlap. It is convenient to use the
Gromov product $\gp{y}{x}{z}$ as a measure of this overlap, and then
it follows immediately from the definition of the Gromov product that
\[  d(x, z) = d(x, y) + d(y, z) - 2 \gp{y}{x}{z},   \]
i.e. we can estimate the distance from $x$ to $z$ as the sum of the
distances from $x$ to $y$ and $y$ to $z$, together with a term
involving the Gromov product.

The aim of this section is to show similar estimates for the case in
which the point $x$ is replaced by a quasiconvex set, and also for the
case in which both the points $x$ and $z$ are replaced with
quasiconvex sets, as illustrated in Figure \ref{pic:quasiconvex2},
where $x$ is the closest point on $D$ to $y$ and $z$ is the closest
point on $E$ to $y$.

\begin{figure}[H] \begin{center}
\begin{tikzpicture}[scale=0.5]

\def\partt{ 
\draw[thick] (0, 0) .. controls (1.5, 2) .. node [below] {$\gamma$} (4, 2);
\filldraw (0, 0) circle (0.1) node [below] {$x$};
\filldraw (4, 2) circle (0.1) node [right] {$y$};

\draw[thick] (4, 2) .. controls (1.5, 2) .. node [above] {$\gamma'$} (0, 4);
\filldraw (0, 4) circle (0.1) node [above] {$z$};

\begin{scope}[xshift=-0.707cm, yshift=-0.707cm]
        \draw[rotate=45] (0, 0) ellipse (1 and 2) node {$D$};
\end{scope}

\filldraw (-1.2, 0.8) circle (0.1) node [below] {$x'$};
}

\partt

\begin{scope}[xshift=10cm]

        \partt

        \begin{scope}[xshift=-0.707cm, yshift=4.707cm]
                \draw[rotate=-45] (0, 0) ellipse (1 and 2) node {$E$};
        \end{scope}

        \filldraw (0.85, 5.5) circle (0.1) node [right] {$z'$};

\end{scope}

\end{tikzpicture}
\end{center} 
\caption{Estimating distances between quasiconvex sets and points.} 
\label{pic:quasiconvex2}
\end{figure}
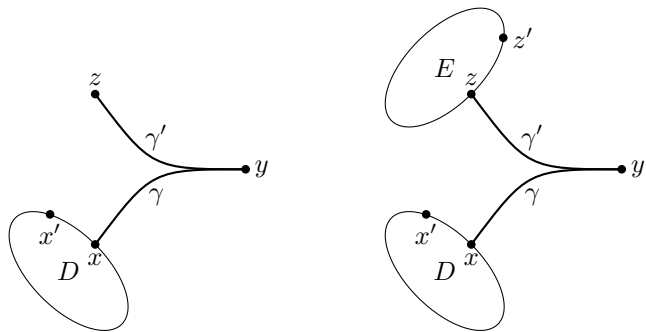

Recall that a set $D \subset X$ is \emph{$Q$-quasiconvex} if any
geodesic connecting two points of $D$ is contained in a
$Q$-neighbourhood of $D$. We now state the two results we will use in
the later sections. In the first result, we replace $x$ with a
quasiconvex set $D$, and find a lower bound on $d(D, z)$, using an
additional hypothesis on $\gp{y}{x}{z}$. Furthermore, we show that the
Gromov product based at $y$ of $z$ with any point in $D$ is equal to
$\gp{y}{x}{z}$ up to bounded additive error depending only on $\delta$
and $Q$.

\begin{prop} \label{prop:one quasiconvex} %
Let $(X, d)$ be a $\delta$-hyperbolic space, which need not be locally
compact. Given a number $Q$, there are numbers $A, B$ and $C$, which
only depend on $\delta$ and $Q$, such that if $D$ is a $Q$-quasiconvex
set, and $x$ is a closest point in $D$ to $y$, then for any point $z$
with
\begin{equation} \label{eq:prop:one quasiconvex1}   
\gp{y}{x}{z} \le d(x, y) - A,  
\end{equation}
then
\begin{equation} \label{eq:prop:one quasiconvex2}
 d(D, z) \ge d(x, y) + d(y, z) - 2 \gp{y}{x}{z} - B.  
\end{equation}
Furthermore, for any point $x' \in D$,
\begin{equation} \label{eq:prop:one quasiconvex3}
\norm{ \gp{y}{x}{z} - \gp{y}{x'}{z} } \le C.    
\end{equation}
\end{prop}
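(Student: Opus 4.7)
The plan is to prove both (\ref{eq:prop:one quasiconvex2}) and (\ref{eq:prop:one quasiconvex3}) by first establishing a standard projection estimate for closest point projections onto quasiconvex subsets, and then twice applying the Gromov four-point condition $\gp{w}{a}{c} \ge \min\{\gp{w}{a}{b}, \gp{w}{b}{c}\} - \delta$.

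The first step is the preliminary claim that there is a constant $C_1 = C_1(\delta, Q)$ such that for every $x' \in D$,
\[ d(y, x') \ge d(y, x) + d(x, x') - 2C_1, \]
or equivalently in terms of Gromov products,
\[ \gp{y}{x}{x'} \ge d(y, x) - C_1. \]
This is the usual projection estimate for quasiconvex sets in hyperbolic spaces: since $x, x' \in D$ and $D$ is $Q$-quasiconvex, the geodesic $[x, x']$ lies in the $Q$-neighbourhood of $D$, so applying thin triangles to $y, x, x'$ the geodesic $[y, x']$ must enter a bounded neighbourhood of $D$, and minimality of $d(y, x) = d(y, D)$ then forces this entry point to lie within a uniform distance of $x$.

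With this in hand I would set $A := C_1 + \delta + 1$ and derive (\ref{eq:prop:one quasiconvex3}) as follows. Applying the four-point condition with $(a,b,c) = (x, x', z)$ gives $\gp{y}{x}{z} \ge \min\{\gp{y}{x}{x'}, \gp{y}{x'}{z}\} - \delta$; but the hypothesis (\ref{eq:prop:one quasiconvex1}) together with the projection estimate gives $\gp{y}{x}{z} \le d(y,x) - A < \gp{y}{x}{x'} - \delta$, so the minimum on the right must be $\gp{y}{x'}{z}$, and hence $\gp{y}{x'}{z} \le \gp{y}{x}{z} + \delta$. Applying the four-point condition again with $(a,b,c) = (x', x, z)$ and using the symmetry $\gp{y}{x'}{x} = \gp{y}{x}{x'}$ yields the reverse inequality by the same comparison, so (\ref{eq:prop:one quasiconvex3}) holds with $C = \delta$.

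Finally, to obtain (\ref{eq:prop:one quasiconvex2}), for each $x' \in D$ I would combine the identity $d(x', z) = d(x', y) + d(y, z) - 2\gp{y}{x'}{z}$ with $d(x', y) \ge d(x, y)$ (minimality of $x$) and $\gp{y}{x'}{z} \le \gp{y}{x}{z} + C$ from the previous step; this gives $d(x', z) \ge d(x, y) + d(y, z) - 2\gp{y}{x}{z} - 2C$, and taking the infimum over $x' \in D$ yields (\ref{eq:prop:one quasiconvex2}) with $B := 2C = 2\delta$. The main obstacle is the preliminary projection lemma: while the statement is well-known, the proof requires careful use of thin triangles together with the assumption that $x$ is an exact closest point (since $X$ need not be locally compact, we cannot appeal to compactness to produce one), and the constant $C_1$ must be tracked as a function of $\delta$ and $Q$ only.
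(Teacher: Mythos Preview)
Your argument is correct. The preliminary projection estimate $\gp{y}{x}{x'} \ge d(y,x) - C_1$ is exactly the content of the paper's Proposition~\ref{prop:atree3}, restated in Gromov-product language rather than tree language, and your sketch of it via thin triangles and quasiconvexity is the standard one. From there, however, your deduction diverges from the paper's. The paper builds an approximate tree on the four points $x, x', y, z$, uses the projection estimate to pin down the combinatorial type of that tree (Figure~\ref{pic:atree4}), and then reads off both \eqref{eq:prop:one quasiconvex2} and \eqref{eq:prop:one quasiconvex3} from tree distances, paying the additive error of Lemma~\ref{approximate_tree_lemma} at each translation between $d$ and $d_T$. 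You instead stay entirely in $X$ and apply the four-point inequality twice, which is shorter and yields sharper constants ($C = \delta$, $B = 2\delta$, versus constants involving $\KT$ and the Morse constant $L_4$). The paper's approximate-tree framework has the advantage that it scales uniformly to the five-point situation of Proposition~\ref{prop:two quasiconvex}, where the tree picture organises three centers at once; your approach would handle that case too, but with more bookkeeping of iterated four-point inequalities. Your concern about the existence of an exact closest point is not actually an obstacle here: it is a hypothesis of the proposition, and in the paper's applications the ambient space is the curve complex with integer-valued metric on vertices, so closest points to the disc set always exist.
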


For the second result, we produce a similar estimate with both $x$ and
$z$ replaced with quasiconvex sets $D$ and $E$, with additional
hypotheses on $\gp{y}{x}{z}$.  Furthermore, we show that the Gromov
product based at $y$ of any point in $D$ with any point in $E$, is
equal to $\gp{y}{x}{z}$, again up to bounded additive error depending
only on $\delta$ and $Q$.

\begin{prop} \label{prop:two quasiconvex} %
Let $(X, d)$ be a $\delta$-hyperbolic space, which need not be locally
compact. Given a number $Q$, there are numbers $A, B$ and $C$, which
only depend on $\delta$ and $Q$, such that if $D$ and $E$ are
$Q$-quasiconvex sets, and $x$ is a closest point in $D$ to $y$, and
$z$ is a closest point in $E$ to $y$, then if
\begin{equation} \label{eq:prop:two quasiconvex1}   
\gp{y}{x}{z} \le \min \{ d(x, y),  d(y, z) \} - A,  
\end{equation}
then
\begin{equation} \label{eq:prop:two quasiconvex2}   
 d(D, E) \ge d(x, y) + d(y, z) - 2 \gp{y}{x}{z} - B.  
\end{equation}
Furthermore, for any points $x' \in D$ and $z' \in E$,
\begin{equation} \label{eq:prop:two quasiconvex3}   
\norm{ \gp{y}{x}{z} - \gp{y}{x'}{z'} } \le C.    
\end{equation}
\end{prop}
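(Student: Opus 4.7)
The plan is to derive Proposition \ref{prop:two quasiconvex} from Proposition \ref{prop:one quasiconvex} by applying the latter twice, once to each of the quasiconvex sets $D$ and $E$. Let $A_1, B_1, C_1$ denote the constants produced by Proposition \ref{prop:one quasiconvex} for the given $\delta$ and $Q$, and set $A := A_1 + C_1$.

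First I would establish the Gromov product estimate \eqref{eq:prop:two quasiconvex3}. Hypothesis \eqref{eq:prop:two quasiconvex1} in particular gives $\gp{y}{x}{z} \le d(x,y) - A_1$, so Proposition \ref{prop:one quasiconvex} applied to the quasiconvex set $D$ with closest point $x$ and third point $z$ yields
\[ |\gp{y}{x}{z} - \gp{y}{x'}{z}| \le C_1 \qquad \text{for every } x' \in D. \]
To then apply Proposition \ref{prop:one quasiconvex} to $E$ with closest point $z$ and third point $x'$, I need to verify that $\gp{y}{z}{x'} \le d(y,z) - A_1$. By symmetry of the Gromov product, $\gp{y}{z}{x'} = \gp{y}{x'}{z}$, and the previous display combined with \eqref{eq:prop:two quasiconvex1} gives
\[ \gp{y}{x'}{z} \le \gp{y}{x}{z} + C_1 \le d(y,z) - A + C_1 = d(y,z) - A_1, \]
so the hypothesis holds. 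This second application then gives $|\gp{y}{x'}{z} - \gp{y}{x'}{z'}| \le C_1$ for every $z' \in E$. Combining the two inequalities by the triangle inequality yields \eqref{eq:prop:two quasiconvex3} with $C := 2C_1$.

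For the distance bound \eqref{eq:prop:two quasiconvex2}, I would start from the defining identity
\[ d(x', z') = d(x', y) + d(y, z') - 2\gp{y}{x'}{z'}. \]
Since $x$ realises the distance from $y$ to $D$ and $z$ realises the distance from $y$ to $E$, we have $d(x', y) \ge d(x, y)$ and $d(y, z') \ge d(y, z)$. Combining these with the bound $\gp{y}{x'}{z'} \le \gp{y}{x}{z} + 2C_1$ established in the first part gives
\[ d(x', z') \ge d(x, y) + d(y, z) - 2\gp{y}{x}{z} - 4C_1 \]
for every $x' \in D$ and $z' \in E$, and taking the infimum over such pairs yields \eqref{eq:prop:two quasiconvex2} with $B := 4C_1$.

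The main subtlety here is entirely one of bookkeeping the constants: the second application of Proposition \ref{prop:one quasiconvex} is made with a third point $x'$ that is not the original $x$, so the Gromov product hypothesis no longer holds on the nose and must be recovered using the additive error $C_1$ introduced by the first application, which forces the choice $A = A_1 + C_1$ rather than $A = A_1$. Beyond this adjustment, the argument is a routine chaining of the one-quasiconvex statement with the elementary identity relating distances and Gromov products.
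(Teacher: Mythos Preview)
Your argument is correct. You chain two applications of Proposition \ref{prop:one quasiconvex}, carefully adjusting the threshold constant so that the Gromov product hypothesis survives the additive error $C_1$ introduced by the first application; the distance estimate then drops out of the defining identity for the Gromov product together with the minimality of $x$ and $z$.

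The paper does not proceed this way. It instead builds a single approximate tree on the five points $x, x', y, z, z'$, invokes Proposition \ref{prop:atree3} once for $D$ (to bound $d(x,v)$) and once for $E$ (to bound $d(z,w)$), and then reads all three conclusions directly off the resulting tree configuration. So the paper reproves from scratch at the level of trees, whereas you treat Proposition \ref{prop:one quasiconvex} as a black box and bootstrap. Your route is more modular and shorter, and it makes clear that the two-set statement is a formal consequence of the one-set statement; the paper's route keeps everything inside one tree picture, which makes the geometry more visible and happens to give slightly smaller explicit constants, though neither argument is optimizing constants.
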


These results follow from standard arguments in coarse geometry, but
we give full details for the convenience of the reader. We recall the
following result regarding approximate trees in $\delta$-hyperbolic
spaces, see for example Ghys and de la Harpe \cite{gh}*{Section 2.2}.

\def\KT{K_T}

\begin{lemma}[Approximate tree]\label{approximate_tree_lemma} %
Let $(X, d)$ be a $\delta$-hyperbolic space, which need not be locally
compact.  Then there is a number $\KT$, which depends only on
$\delta$, such that for any finite collection of points $x_1, \ldots,
x_n$, there is a geodesic tree $T$ in $X$ containing the $x_i$, such
that
\begin{equation} \label{eq:atree}
d_T(x, y) - \KT n  \le d(x, y) \le d_T(x, y)
\end{equation}
for all points $x$ and $y$ in $T$.
We call $T$ the {\em approximate tree} determined by the $x_i$.
\end{lemma}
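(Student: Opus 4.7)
I would argue by induction on $n$, building the tree one point at a time. The cases $n=1$ and $n=2$ are trivial: take $T$ to be $\{x_1\}$, or the geodesic $[x_1,x_2]$, and then $d_T$ equals $d$ on the marked points. The case $n=3$ is the classical tripod approximation: any geodesic triangle with vertices $x_1,x_2,x_3$ in a $\delta$-hyperbolic space lies in a $4\delta$-neighborhood of a tripod whose three external edges have lengths equal to the Gromov products $\gp{x_i}{x_j}{x_k}$, and consequently the vertex distances on this tripod agree with the $X$-distances up to an additive error $C_0$ depending only on $\delta$. This supplies a geodesic tree $T_3 \subset X$ (possibly chosen by pushing the tripod back into $X$ along the thin-triangle projections) with $|d_{T_3}-d|\le C_0$ on $\{x_1,x_2,x_3\}$.

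For the inductive step, suppose I already have a geodesic tree $T_{n-1}\subset X$ containing $x_1,\dots,x_{n-1}$, with $d(a,b)\le d_{T_{n-1}}(a,b)\le d(a,b)+K_T(n-1)$ for every pair. A first observation I would record is that $T_{n-1}$ is $Q$-quasiconvex in $X$, with $Q$ depending only on $\delta$ and the additive constant $C_0$ from the tripod step (since every edge is a genuine geodesic of $X$). To attach $x_n$, I would choose a closest point $y_n\in T_{n-1}$ to $x_n$ and set $T_n:=T_{n-1}\cup [y_n,x_n]$, where $[y_n,x_n]$ is any geodesic in $X$. The upper bound $d(x_i,x_n)\le d_{T_n}(x_i,x_n)$ is immediate from the triangle inequality and the inductive upper bound. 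The content is the lower bound
\[ d_{T_n}(x_i,x_n) \;=\; d_{T_{n-1}}(x_i,y_n)+d(y_n,x_n) \;\le\; d(x_i,x_n)+K_Tn. \]
Because $y_n$ is a nearest point on the $Q$-quasiconvex set $T_{n-1}$ to $x_n$, a standard projection/thin-triangle argument (the same one underlying Proposition \ref{prop:one quasiconvex}) shows
\[ d(x_i,y_n)+d(y_n,x_n)\le d(x_i,x_n)+C_1 \]
with $C_1$ depending only on $\delta$ and $Q$. Combining this with the inductive hypothesis $d_{T_{n-1}}(x_i,y_n)\le d(x_i,y_n)+K_T(n-1)$ yields $d_{T_n}(x_i,x_n)\le d(x_i,x_n)+K_T(n-1)+C_1$. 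If I fix $K_T:=\max\{C_0,C_1\}$, the induction closes with a linear-in-$n$ additive defect, as required.

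The main obstacle is the inductive step where I must verify that the new edge $[y_n,x_n]$ can be glued to $T_{n-1}$ without distorting distances between the older marked points; this is really the statement that nearest-point projection onto a quasiconvex subset of a hyperbolic space is coarsely well-defined and that geodesics from outside the set essentially pass through their projection. A minor subtlety is that $[y_n,x_n]$ may intersect $T_{n-1}$ in more than the point $y_n$, but this only shortens edges and does not affect the upper bound on $d_{T_n}$; a cleaner realization is to view $T_n$ abstractly as an $\mathbb R$-tree with the edge lengths prescribed above and the obvious map to $X$ that is isometric on each edge, which is the standard interpretation of ``geodesic tree'' used in the lemma. With this understanding, all estimates depend only on $\delta$ (and not on $Q$, which itself depends only on $\delta$), so $K_T$ is a function of $\delta$ alone.
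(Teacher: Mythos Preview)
The paper does not prove this lemma; it simply cites Ghys and de~la~Harpe \cite{gh}*{Section 2.2} as a reference. So there is no ``paper's proof'' to compare against, and your proposal must stand on its own.

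There is a genuine gap in your inductive step. You assert that $T_{n-1}$ is $Q$-quasiconvex with $Q$ depending only on $\delta$ and $C_0$, ``since every edge is a genuine geodesic of $X$.'' That justification is insufficient: a tree built from geodesic edges is not automatically quasiconvex with a uniform constant. What you actually know from the inductive hypothesis is that the tree path between any two points of $T_{n-1}$ is a $(1,K_T(n-1))$-quasigeodesic, and the Morse lemma then gives quasiconvexity with a constant $Q$ of order $K_T(n-1)+O(\delta)$, which grows with $n$. The nearest-point projection estimate you invoke (the analogue of Proposition~\ref{prop:one quasiconvex}) produces $C_1$ depending on $Q$, hence on $K_T(n-1)$; feeding this back into the recursion yields an error at step $n$ of order $K_T(n-1)+C_1$, which compounds geometrically rather than linearly. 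Concretely, if at some stage two edges of the tree meet at a vertex $v$ at a small Gromov angle, points far out on the two edges are joined by an $X$-geodesic that strays far from $T_{n-1}$, and the nearest-point projection of a new $x_n$ near that geodesic need not satisfy $d(x_i,y_n)+d(y_n,x_n)\le d(x_i,x_n)+O(\delta)$.

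The standard argument (as in Ghys--de~la~Harpe, or Gromov's original) avoids this by working directly with the four-point inequality: one fixes a basepoint, replaces the Gromov product by a supremum over chains to force an exact ultrametric-type inequality, and reads off a $0$-hyperbolic (tree) metric whose defect is controlled by $\delta$ times a factor logarithmic (hence certainly linear) in $n$. That route never needs an inductive quasiconvexity bound. If you want to salvage an inductive construction, you would need a stronger invariant than the bare additive bound---for instance, controlling the Gromov products at every branch vertex of $T_{n-1}$---so that the attachment step costs only $O(\delta)$ regardless of $n$.
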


As the Gromov product is defined in terms of distances, if we write
$\gp{x}{y}{z}^T$ for the Gromov product in the tree $T$ with vertices
$\{ x_1, \ldots x_n \}$, then $\norm{ \gp{x}{y}{z}^T - \gp{x}{y}{z} }$
is bounded by a number, in fact $3 \KT n /2$, which only depends on
$\delta$ and $n$, where $n$ is the number of vertices in the tree.

Recall that any three distinct points in a tree determine a unique
\emph{center}, namely the unique point that lies in the intersection
of the three geodesics connecting the three possible pairs of
points. It follows from the definition of the Gromov product that for
any points $x, y$ and $z$ in a tree $T$, if $v$ is the center for $x,
y$ and $z$, then $\gp{z}{x}{y}^T = d_T(z, v)$.

Let $I$ be a connected subinterval of $\R$.  We say a path $\gamma
\colon I \to X$ is a \emph{$(K, c)$-quasigeodesic} if
\[ \tfrac{1}{K} \norm{x - y} - c \le d( \gamma(x), \gamma(y) ) \le K
\norm{x - y} + c, \]
for all $x$ and $y$ in $\R$. In a $\delta$-hyperbolic space,
quasigeodesics are contained in bounded neighbourhoods of
geodesics. This is often known as the Morse Lemma, see for example
Bridson and Haefligger \cite{bh}*{Theorem III.H.1.7}. 

\begin{lemma} \label{lemma:morse} %
Let $\gamma$ be a $(K, c)$-quasigeodesic in a $\delta$-hyperbolic
space. Then there is a number $L$, depending only on $K, c$ and
$\delta$, such that $\gamma$ is contained in an $L$-neighbourhood of
the geodesic connecting its endpoints.
\end{lemma}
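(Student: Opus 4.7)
The plan is to follow the standard Morse-stability argument for quasigeodesics in $\delta$-hyperbolic spaces, whose geometric core is the exponential divergence of paths from geodesics.

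The first step is a reduction to a tame quasigeodesic. I would replace $\gamma \colon [a,b] \to X$ with the piecewise-geodesic path $\bar\gamma$ that joins the samples $\gamma(n)$ at integer parameters $n \in \Z \cap [a,b]$ by geodesic segments; each such segment has length at most $K + c$, so $\bar\gamma$ is a continuous $(K', c')$-quasigeodesic with constants depending only on $K$ and $c$, and its image lies within distance $K + c$ of that of $\gamma$. After reparameterizing by arclength, it suffices to prove the result for continuous $1$-Lipschitz quasigeodesics.

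The main technical input is an \emph{exponential divergence} lemma: there exist constants $R_0 = R_0(\delta)$ and $\alpha = \alpha(\delta) > 1$ such that for any geodesic $\eta$ in $X$ and any continuous path $\sigma$ whose endpoints project to $\pi_1, \pi_2 \in \eta$, if the interior of $\sigma$ avoids the open $R$-neighbourhood of $\eta$ for some $R > R_0$, then
\[
 \mathrm{length}(\sigma) \geq \alpha^{R - R_0} \cdot d(\pi_1, \pi_2).
\]
I would prove this by slicing $\sigma$ according to the nested neighbourhoods of $\eta$ at depths $R_0 + k\delta$, $k = 0, 1, 2, \ldots$, and using the thin-triangles property inductively: any subpath passing from depth $r$ to depth $r + \delta$ must cover at least twice the horizontal distance of its shadow at depth $r$. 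Proving this estimate carefully is the step I expect to be the main technical obstacle.

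Finally, set $D = \sup_t d(\gamma(t), [p,q])$, approximately attained at $x_0 = \gamma(t_0)$, and choose $t_- < t_0 < t_+$ maximal so that $\gamma|_{(t_-, t_+)}$ stays outside the open $D/2$-neighbourhood of $[p, q]$. Let $p_\pm$ denote the nearest-point projections of $\gamma(t_\pm)$ to $[p,q]$, and set $M = d(p_-, p_+)$. Divergence at depth $R = D/2$ gives $\mathrm{length}(\gamma|_{[t_-, t_+]}) \geq \alpha^{D/2 - R_0} \cdot M$, while the triangle inequality via the projections gives $d(\gamma(t_-), \gamma(t_+)) \leq D + M$, and the upper quasigeodesic inequality applied to the $1$-Lipschitz path $\gamma$ gives $\mathrm{length}(\gamma|_{[t_-, t_+]}) \leq K'(D + M) + c'$. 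When $M$ is bounded away from zero, comparing an exponential and a linear function of $D$ forces an upper bound on $D$; when $M$ is small, the endpoints $\gamma(t_\pm)$ are close, and the lower quasigeodesic inequality applied to them bounds $t_+ - t_-$ and hence $D$ by a constant. Either way, $D$ is bounded by a constant depending only on $K, c$ and $\delta$, which is the desired $L$.
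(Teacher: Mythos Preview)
The paper does not actually prove this lemma; it merely cites Bridson and Haefliger, Theorem III.H.1.7, so there is no argument in the paper to compare against. Your outline is the standard divergence-based route, and the taming reduction and the exponential divergence estimate are both fine ingredients.

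However, the final paragraph has a genuine gap in the case where $M = d(p_-, p_+)$ is small. You assert that then ``the endpoints $\gamma(t_\pm)$ are close, and the lower quasigeodesic inequality applied to them bounds $t_+ - t_-$ and hence $D$ by a constant.'' But the triangle inequality only gives $d(\gamma(t_-), \gamma(t_+)) \le D/2 + M + D/2 = D + M$, so the quasigeodesic inequality yields $|t_+ - t_-| \le K'(D + M + c')$. Since the $1$-Lipschitz path must climb from height $D/2$ to $D$ and back, one also has $|t_+ - t_-| \ge D$, and combining these gives only $D \le K'(D + M + c')$, which is vacuous once $K' \ge 1$. Meanwhile your divergence inequality $\mathrm{length} \ge \alpha^{D/2 - R_0} \cdot M$ is trivially satisfied when $M = 0$, so neither branch of the dichotomy bounds $D$. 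Concretely, nothing in your argument rules out a long radial ``spike'' of $\gamma$ that leaves and returns to the $D/2$-neighbourhood with both endpoints projecting to the same point of $[p,q]$.

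The standard repair (as in Bridson--Haefliger) is to choose the interval by arclength rather than by excursion: set $t_\pm = t_0 \pm 2K'D$, clipped to the domain. The lower quasigeodesic inequality then forces $d(x_0, \gamma(t_\pm)) \ge 2D - c'$, so $x_0$ lies at distance at least $D - c'$ from each of the three sides $[\gamma(t_-), p_-]$, $[p_-, p_+]$, $[p_+, \gamma(t_+)]$ of the detour path, which has total length $O(D)$. One then either invokes the logarithmic estimate (every point on the geodesic $[\gamma(t_-), \gamma(t_+)]$ lies within $\delta \log_2 O(D) + 1$ of the detour, and thin quadrilaterals place $x_0$ within $2\delta$ of that geodesic), or equivalently first proves $[p,q] \subset N_{D_0}(\gamma)$ for a uniform $D_0$ and then runs a short connectedness argument for the reverse inclusion. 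Either way the comparison of $D$ against $\log D$ bounds $D$.
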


As geodesics in the approximate tree $T$ are $(1, \KT
n)$-quasigeodesics, this implies that any geodesic in $T$ is contained
in an $L_n$-neighbourhood of the geodesic in $X$ connecting its
endpoints, for some number $L_n$, depending only on $\delta$ and
$n$.

\begin{prop} \label{prop:atree3} %
Let $(X, d)$ be a $\delta$-hyperbolic space, which need not be locally
compact.  Let $D$ be a $Q$-quasiconvex set, and let $y$ be a point in
$X$. Let $x$ be the closest point in $D$ to $y$, and let $x'$ be an
arbitrary point in $D$. Let $T$ be an approximate tree determined by a
set of $n$ points, which include $x, x'$ and $y$, and let $v$ be the
center of $x, x'$ and $y$ in $T$. Then there is a number $A$, which
only depends on $\delta, Q$ and $n$, such that 
\begin{equation} \label{eq:prop:atree}
d(x, v) \le A.
\end{equation}
\end{prop}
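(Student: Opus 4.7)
The plan is to exploit the fact that $v$ lies simultaneously on the $T$-geodesic from $x$ to $x'$ and on the $T$-geodesic from $y$ to $x$, and then extract a contradiction with the ``closest point'' property of $x$ whenever $d(v,x)$ is large.

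First I would use Lemma \ref{approximate_tree_lemma} to note that the $T$-geodesic from $x$ to $x'$ in $T$, viewed as a concatenation of geodesic segments in $X$, is a $(1, \KT n)$-quasigeodesic in $X$ whose endpoints lie in $D$. By Lemma \ref{lemma:morse} it is contained in an $L_n$-neighbourhood of an $X$-geodesic $[x,x']$, and by $Q$-quasiconvexity of $D$ this $X$-geodesic lies in a $Q$-neighbourhood of $D$. Since $v$ lies on the $T$-geodesic from $x$ to $x'$, there is therefore a point $p \in D$ with
\[
d(v, p) \le L_n + Q.
\]

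Next I would use the fact that $v$ lies on the $T$-geodesic from $y$ to $x$, so that $d_T(v, y) + d_T(v, x) = d_T(x, y)$. Applying the two-sided estimate $d \le d_T \le d + \KT n$ from \eqref{eq:atree}, this gives
\[
d(v, y) \;\le\; d_T(v, y) \;=\; d_T(x, y) - d_T(v, x) \;\le\; d(x, y) + \KT n - d(v, x).
\]
Combining with the triangle inequality and the closest-point property of $x$,
\[
d(x, y) \;\le\; d(p, y) \;\le\; d(v, p) + d(v, y) \;\le\; (L_n + Q) + d(x, y) + \KT n - d(v, x),
\]
which rearranges to $d(v, x) \le L_n + Q + \KT n$. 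Thus $A := L_n + Q + \KT n$, which depends only on $\delta$, $Q$, and $n$, works.

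The only mildly subtle point is the application of the Morse lemma: one must verify that the piecewise-geodesic path in $X$ traced out by the $T$-geodesic from $x$ to $x'$ really is a $(1, \KT n)$-quasigeodesic, which follows directly from Lemma \ref{approximate_tree_lemma} applied to arbitrary pairs of points on the path (including those in the interiors of tree edges). The rest is elementary bookkeeping with the triangle inequality and the defining property that $x$ realises the distance from $y$ to $D$.
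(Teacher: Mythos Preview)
Your proof is correct and follows essentially the same approach as the paper's: both arguments observe that $v$ lies on the $T$-geodesic $[x,x']$, apply the Morse lemma and $Q$-quasiconvexity to place $v$ within $L_n + Q$ of $D$, and then combine this with the tree relation $d_T(x,y) = d_T(x,v) + d_T(v,y)$ and the closest-point property of $x$ to conclude $d(x,v) \le L_n + Q + \KT n$. The only cosmetic difference is that the paper works with $d(D,v)$ directly rather than naming a point $p \in D$, and carries out the final inequality chain entirely in the tree metric before passing back to $d$; the resulting constant $A = L_n + Q + \KT n$ is identical.
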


Although the constant $A$ depends on the number of points determining
the approximate tree $T$, we will only consider approximate trees
determined by at most $5$ points.

\begin{proof}(of Proposition \ref{prop:atree3}.) %
Let $D$ be a $Q$-quasiconvex set, let $y$ be a point in $X$, let $x$
be the closest point in $D$ to $y$, and let $x'$ be any other point in
$D$.  Let $T$ be an approximate tree containing $n$ points, which
include $x, x'$ and $y$, and let $v$ be the center of $x, x'$ and $y$
in $T$. In particular, the minimal subtree in $T$ containing $x, x'$
and $y$ also contains $v$, and has the configuration illustrated in
Figure \ref{pic:atree3}, possibly with edges of zero length.

\begin{figure}[H] \begin{center}
\begin{tikzpicture}[scale=0.5]

\draw[thick] (0, 0) -- (2, 2);
\filldraw (0, 0) circle (0.1) node [left] {$x$};
\filldraw (2, 2) circle (0.1) node [left] {$v$};

\draw[thick] (2, 2) -- (4, 2);
\filldraw (4, 2) circle (0.1) node [right] {$y$};

\draw[thick] (2, 2) -- (0, 6);
\filldraw (0, 6) circle (0.1) node [left] {$x'$};

\end{tikzpicture}
\end{center} 
\caption{A minimal subtree containing $x$, $x'$ and $y$,
  with center $v$.}
\label{pic:atree3}
\end{figure}
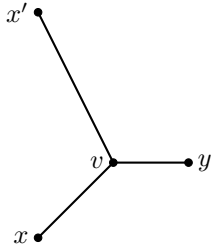

By geodesic stability, Lemma \ref{lemma:morse}, as $v$ lies on the
geodesic in $T$ connecting $x$ and $x'$, the point $v$ lies within a
bounded distance $L_n$ of the geodesic in $X$ connecting $x$ and
$x'$. As $D$ is $Q$-quasiconvex, the geodesic in $X$ from $x$ to $x'$
is contained in a $Q$-neighbourhood of $D$, and so
\begin{equation} \label{eq:containment1} 
d(D, v) \le Q + L_n. 
\end{equation}  
As $x$ is a closest point in $D$ to $y$, $d(x, y) = d(D, y)$, and so
the triangle inequality implies
\[  d(x, y) \le d(D, v) + d(v, y). \]
Using \eqref{eq:containment1} this implies
\[  d(x, y) \le Q + L_n + d(v, y). \]
As the metric in $X$ is coarsely equivalent to the metric in $T$,
using \eqref{eq:atree} implies
\[  d_T(x, y) - n \KT \le Q + L_n + d_T(v, y). \]
As $T$ is a tree, $d_T(x, y) = d_T(x, v) + d_T(v, y)$, which gives
\begin{equation} \label{eq:containment2} %
d(x, v) \le d_T(x, v) \le A,
\end{equation}
where $A = Q + L_n + n \KT$, and so the distance from $x$ to $v$ in
$X$, is bounded by a number which only depends on $\delta, Q$ and $n$,
the number of points in the approximate tree, as required.
\end{proof}

We now prove Proposition \ref{prop:one quasiconvex}.

\begin{proof}
Let $T$ be an approximate tree containing the points $x, x', y$ and
$z$, and let $v$ be the center in $T$ for $x, x'$ and $y$, and let $w$
be the center in $T$ for $x, y$ and $z$.  The set $D$ and the points
$x, x'$ and $y$ satisfy the conditions of Proposition
\ref{prop:atree3}, and so $d(x, v) \le A_1$, where $A_1$ depends only on
$\delta, Q$ and $n$, and in this case $n$ is equal to $4$. 

We now show that condition \eqref{eq:prop:one quasiconvex1}, where $A$
is chosen to be $6 \KT + A_1$, implies that the centers $v$ and
$w$ occur in that order along the geodesic from $x$ to $y$, so the
approximate tree $T$ has the configuration illustrated in Figure
\ref{pic:atree4}.

\begin{figure}[H] \begin{center}
\begin{tikzpicture}[scale=0.5]

\draw[thick] (0, 0) node [left] {$x$} -- (2, 2) node [left] {$v$};

\draw[thick] (2, 2) -- (4, 2) node [right] {$w$};

\draw[thick] (2, 2) -- (0, 5) node [left] {$x'$};

\draw[thick] (4, 2) -- (6, 0) node [right] {$y$};

\draw[thick] (4, 2) -- (6, 7) node [right] {$z$};

\filldraw (0, 0) circle (0.1);
\filldraw (2, 2) circle (0.1);
\filldraw (4, 2) circle (0.1);
\filldraw (0, 5) circle (0.1);
\filldraw (6, 0) circle (0.1);
\filldraw (6, 7) circle (0.1);

\end{tikzpicture}
\end{center} 
\caption{An approximate tree containing $x, x', y$ and $z$.}
\label{pic:atree4}
\end{figure}
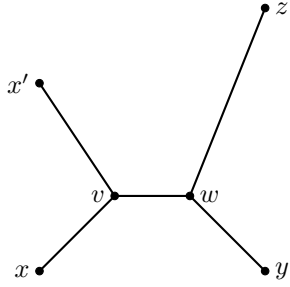

As the metric in $T$ is equal to the metric in $X$ up to bounded
additive error, condition \eqref{eq:prop:one quasiconvex1} is
equivalent to 
\[ \gp{y}{x}{z}^T \le d_T(x, y) - A + 6 \KT. \]
In $T$, the Gromov product $\gp{y}{x}{z}^T$ is equal to $d_T(y, w)$,
and the center $v$ lies on the geodesic connecting $x$ to $y$, so
\[ d_T(w, y) \le d_T(x, v) + d_T(v, y) - A + 6 \KT . \]
As $d(x, v) \le A_1$, this implies
\[ d_T(w, y) \le d_T(v, y) - A + 6 \KT + A_1. \]
As we have chosen $A = 6 \KT  + A_1$, this implies that $d_T(w, y)
\le d_T(v, y)$, and so the centers $v$ and $w$ must lie in that order
along the geodesic from $x$ to $y$. The constant $A$ only depends on
$\delta, Q$ and $n$, which in this case is $4$, and so $A$ only
depends on $\delta$ and $Q$, as required.

In order to show \eqref{eq:prop:one quasiconvex2}, let $x'$ be the
closest point on $D$ to $z$. Then, given the configuration of the
approximate tree shown in Figure \ref{pic:atree4},
\[ d_T(x', z) \ge d_T(v, z).  \]
As $d_T(x, v) \le A_1$,
\[ d_T(x', z) \ge d_T(x, z) - A_1,  \]
and now using the definition of the Gromov product,
\[ d_T(x', z) \ge d_T(x, y) + d_T(y, z) - 2 \gp{y}{x}{z}^T - A_1.  \]
As the metrics in $T$ and $X$ are equal up to additive constants, this
implies that
\[ d(D, z) \ge d(x, y) + d(y, z) - 2 \gp{y}{x}{z} - B,  \]
where $B = A_1 + 14 \KT$, which only depends on $\delta$ and $Q$, as
required.

Finally, the configuration of the approximate tree shown in Figure
\ref{pic:atree4}, shows that the two Gromov products $\gp{y}{x}{z}^T$
and $\gp{y}{x'}{z}^T$ are both equal to $d_T(y, w)$, and so as the
metrics in $T$ and $X$ are equal up to additive error, this shows
\eqref{eq:prop:one quasiconvex3}, for some constant $C$, which only
depends on $\delta$ and $Q$, as required.
\end{proof}

Finally, we prove Proposition \ref{prop:two quasiconvex}.

\begin{proof}
Consider an approximate tree $T$ containing the points $x, x', y, y'$
and $z$.  Let $v$ be the center in $T$ for $x, x'$ and $y$, let
$w$ be the center in $T$ for $z, z'$ and $y$, and let $u$ be the
center for $x, y$ and $z$.

The set $D$ and the points $x, x'$ and $y$ satisfy the conditions of
Proposition \ref{prop:atree3}, and so $d(x, v) \le A_1$, where $A_1$
depends only on $\delta, Q$ and $n$, and in this case $n$ is equal to
$5$. Similarly, the set $E$ and the points $z, z'$ and $y'$ also
satisfy the conditions of Proposition \ref{prop:atree3}, and so $d(z,
w) \le A_1$, where $A_1$ depends only on $\delta, Q$ and $n$, and
again in this case $n$ is equal to $5$.

We now show that condition \eqref{eq:prop:two quasiconvex1}, where $A$
is chosen to be $15 \KT / 2 + A_1$, implies that the centers $v, u$
and $w$ occur in that order along the geodesic from $x$ to $z$, so the
approximate tree $T$ has the configuration illustrated in Figure
\ref{pic:approximate-tree2}, possibly with some zero length edges.

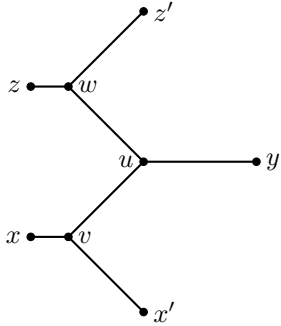
\begin{figure}[H] \begin{center}
\begin{tikzpicture}[scale=0.5]

\draw[thick] (0, 0) -- (-1, 0) node [left] {$x$};

\draw[thick] (0, 0) node [right] {$v$} -- (2, 2) node [left] {$u$};

\draw[thick] (0, 0) -- (2, -2) node[right] {$x'$};

\draw[thick] (2, 2) -- (5, 2) node [right] {$y$};

\draw[thick] (2, 2) -- (0, 4) node [right] {$w$};

\draw[thick] (0, 4) -- (-1, 4) node [left] {$z$};

\draw[thick] (0, 4) -- (2, 6) node[right] {$z'$};

\filldraw (0, 0) circle (0.1);
\filldraw (-1, 0) circle (0.1);
\filldraw (2, -2) circle (0.1);
\filldraw (5, 2) circle (0.1);
\filldraw (0, 4) circle (0.1);
\filldraw (-1, 4) circle (0.1);
\filldraw (2, 6) circle (0.1);
\filldraw (2, 2) circle (0.1);

\end{tikzpicture}
\end{center} 
\caption{An approximate tree for $x, x', y, z$ and $z'$.}
\label{pic:approximate-tree2}
\end{figure}

As the metric in $T$ is equal to the metric in $X$ up to bounded
additive error, condition \eqref{eq:prop:two quasiconvex1} is
equivalent to 
\[ \gp{y}{x}{z}^T \le \min \{ d_T(x, y), d_T(y, z) \} - A + 15
\KT/2. \]
In $T$, the Gromov product $\gp{y}{x}{z}^T$ is equal to $d_T(y, u)$,
and the center $v$ lies on the geodesic in $T$ connecting $x$ to $y$,
and the center $w$ lies on the geodesic connecting $z$ to $y$, so
\[ d_T(y, u) \le \min \{ d_T(x, v) + d_T(v, y), d_T(z, w) + d_T(w, y)
\} - A + 15 \KT / 2. \]
As $d(x, v) \le A_1$, and $d(z, w) \le A_1$, this implies
\[ d_T(y, u) \le \min \{ d_T(v, y), d_T(w, y) \} - A + 15 \KT / 2 + A_1. \]
As we have chosen $A = 15 \KT / 2 + A_1$, this implies that $d_T(y, u)
\le d_T(v, y)$, and also $d_T(y, u) \le d_T(w, y)$, and so the centers
$v$ and $w$ must lie further from $y$ than $u$, and the only way in
which this can happen is if the center $u$ lies between $v$ and $w$,
as illustrated in Figure \ref{pic:approximate-tree2}.  The constant
$A$ only depends on $\delta, Q$ and $n$, which in this case is $5$,
and so $A$ only depends on $\delta$ and $Q$, as required.

For estimate \eqref{eq:prop:two quasiconvex2}, the distance $d(D, E)$
between the two quasiconvex sets, suppose that $x'$ and $z'$ are
closest points in $D$ and $E$ respectively, i.e. $d(x', z') = d(D,
E)$. In the approximate tree, $d_T(x', z') \ge d_T(x, z)$. Using the
definition of the Gromov product in the approximate tree gives
\[  d_T(x', y') \ge d_T(x, y) + d_T(y,z) - 2 \gp{y}{x}{z}^T - 2A_1.    \]
Now using Lemma \ref{approximate_tree_lemma} to estimate distances in
$X$, gives
\[ d(D, E) \ge d(x, y) + d(y,z) - 2 \gp{y}{x}{z} - 2A_1 -5 \KT.  \]
If we choose $B = 2A_1 + 5 \KT$, which only depends on $\delta$ and
$Q$, then this shows \eqref{eq:prop:two quasiconvex2}.

For the final estimate for the Gromov products, observe that in the
approximate tree $T$, the Gromov product $\gp{y}{x}{z}^T$ is equal to
$ \gp{y}{x'}{z'}^T$, and so again by Lemma
\ref{approximate_tree_lemma},
\[ \norm{ \gp{y}{x}{z} - \gp{y}{x'}{z'} } \le 15 \KT /2, \]
and so we may choose $C = 15 \KT /2$, which only depends on $\delta$
and $Q$, and this gives the final estimate \eqref{eq:prop:two quasiconvex3}.
\end{proof}

\section{Local estimates far from the disc set} \label{section:local far}

We are interested in estimating distances from points in the curve
complex to a particular disc set $\mathcal{D}$. Given a positive
number $R$, it will be convenient to consider the following function
$\phi_R \colon \mathcal{C}(S_g) \to \N_0$ defined by
\[ \phi_R(x) = \lfloor d(\mathcal{D}, x) / R \rfloor, \]
where for any real number $r$, the function $\lfloor r \rfloor$ is the
largest integer less than or equal to $r$.  In particular, a random
walk on the mapping class group gives rise to a sequence of random
variables $X_n = \phi_R(w_n x_0)$, with values in $\N_0$.

We now show that if $X_m$ is sufficiently large, then the probability
that $X_{m+n}$ is less than $X_m - r$ decays exponentially in $r$, for
all $n$ sufficiently large. As the random walk is a Markov chain on
$G$, it suffices to consider the case of a random walk of length $n$
starting at some point $g x_0$ in $\mathcal{C}(S_g)$.

\begin{prop} \label{prop:backtrack} %
Let $\mu$ be a finitely supported probability distribution on the
mapping class group, whose semi-group support $\langle \supp(\mu)
\rangle_+$ is a non-elementary subgroup.  For any number $q > 0$,
there are numbers $R$ and $N$, which depend on $q$ and $\mu$, such
that
\[ \P( \phi_R( g w_n x_0 ) \le t + 1 - r \mid \phi_R( g x_0 ) = t \ge 1
) \le q^{r}, \]
for all $n \ge N$ and $r \ge 0$.
\end{prop}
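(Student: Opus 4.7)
The plan is to combine the coarse-geometry estimates of Proposition \ref{prop:one quasiconvex} with the shadow decay estimate for non-elementary random walks on the mapping class group, due to Maher \cite{M1}. The key claim is that, on the event to be bounded, the endpoint $y_n := g w_n x_0$ is confined to a shadow of the disc set whose depth grows linearly in $r$.

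\emph{Geometric step.} Write $y := g x_0$ and let $x \in \mathcal{D}$ be a closest point to $y$, so that $tR \le d(y, x) < (t+1)R$; let $x''$ be a closest point in $\mathcal{D}$ to $y_n$. The event $E_r := \{d(y_n, \mathcal{D}) < (t+2-r)R\}$ gives $d(x'', y_n) < (t+2-r)R$, so the triangle inequality supplies
\[ (x'' \cdot y_n)_y \ge d(y, x'') - d(x'', y_n) \ge d(y, \mathcal{D}) - (t+2-r)R. \]
If the hypothesis $(x \cdot y_n)_y \le d(y, x) - A$ of Proposition \ref{prop:one quasiconvex} holds, estimate \eqref{eq:prop:one quasiconvex3} gives $(x \cdot y_n)_y \ge (x'' \cdot y_n)_y - C$, combining to
\[ (x \cdot y_n)_y \ge d(y, x) - (t+2-r)R - C, \]
so $y_n \in S_y(x, (t+2-r)R + C)$. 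Otherwise $y_n \in S_y(x, A)$ directly, and this is contained in the same shadow once $R > A - C$ in the valid range $r \le t + 1$. The main technical point is exactly this step: using \eqref{eq:prop:one quasiconvex3} to trade the Gromov product based on the random point $x''$ for one based on the fixed $x$, so the shadow lemma applies to a single uniform target.

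\emph{Probabilistic step and conclusion.} Because $\MCG(S_g)$ acts isometrically on $\mathcal{C}(S_g)$, shadows based at $y$ are $g$-translates of shadows based at $x_0$, so Maher's shadow lemma \cite{M1} transfers: there are constants $K_\mu, \rho_\mu < 1$ depending only on $\mu$ with $\P(y_n \in S_y(z, R')) \le K_\mu\, \rho_\mu^{d(y,z) - R'}$ whenever $d(y, z) \ge R'$. Applied to the containment above,
\[ \P(E_r) \le K_\mu\, \rho_\mu^{(r-2)R - C}, \]
for $r \ge 2 + C/R$. Choose $R$ large enough that $\rho_\mu^R \le q^c$ for suitable $c > 1$ and further enlarge $R$ to absorb the factor $K_\mu\, \rho_\mu^{-C}$; this gives $\P(E_r) \le q^r$ for all $r$ above a fixed threshold $r_0 = r_0(q, \mu)$. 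For $r \le r_0$, where $q^r \ge q^{r_0}$, the bound follows from Maher's linear progress theorem \cite{M1} by taking $N$ large enough that the walk has drifted sufficiently far with probability at least $1 - q^{r_0}$; for $r > t + 1$ the event $E_r$ is empty, since $\phi_R \ge 0$.
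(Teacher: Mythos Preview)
Your geometric step and the shadow bound are correct for $r$ above a threshold (roughly $r \ge 3$), but the treatment of small $r$ has a real gap. The shadow you produce, $S_y(x, (t+2-r)R + C)$, has depth $d(y,x) - (t+2-r)R - C \le (r-2)R - C$; for $r \le 2$ this is non-positive and the shadow estimate is vacuous, and at $r=1$ enlarging $R$ only makes the bound $K_\mu \rho_\mu^{-R-C}$ worse. You then invoke linear progress for these small $r$, but linear progress controls $d(gx_0, gw_nx_0)$, not $d(\mathcal{D}, gw_nx_0)$: the walk can travel arbitrarily far from its starting point while remaining within any fixed distance of the infinite-diameter disc set, so ``drifted sufficiently far'' does not by itself force $\phi_R(y_n) > t$.

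What you are missing is exactly the distance estimate \eqref{eq:prop:one quasiconvex2} from Proposition~\ref{prop:one quasiconvex}, which you never use:
\[ d(\mathcal{D}, y_n) \ge d(x, y) + d(y, y_n) - 2\,(x \cdot y_n)_y - B. \]
This is how the paper proceeds. On the linear-progress event $\{d(y,y_n)\ge 2R\}$ (whose complement has probability $\le Kc^n$, made $\le q^3/3$ by choosing $N$ large), and with $R \ge B + 2C$, this yields $d(\mathcal{D}, y_n) \ge (t+1)R - 2\,(x \cdot y_n)_y$; hence $\phi_R(y_n) \le t+1-r$ forces the Gromov product to be at least of order $rR/2$, and shadow decay then gives a bound $\le Kc^{rR/2}$, effective already at $r=1$. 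Your argument recovers only the Gromov-product half of \eqref{eq:prop:one quasiconvex2} via \eqref{eq:prop:one quasiconvex3}, losing the $d(y,y_n)$ term; it is precisely the combination of that term (supplied by linear progress) with the Gromov product (bounded by shadow decay) that handles small $r$.
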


Recall that the probability distribution $\mu$ depends on the group
$G$, and so any number which depends on $\mu$ implicitly depends on
the group $G$, and hence on the coarse geometry constants determined
by $G$.

In order to show this, we make use of Proposition \ref{prop:one
  quasiconvex}, which enables us to estimate the distance of $g w_n
x_0$ from a quasiconvex set in terms of the distance travelled by the
sample path, and an estimate on the size of a particular Gromov
product. We will also use the fact that the distance travelled by the
sample path in the curve complex $\mathcal{C}(S_g)$ grows linearly in
$n$ with exponential decay.

\def\Llin{L}
\def\Klin{K_\ell}
\def\clin{c_\ell}

\begin{theorem} \cite{M2} \label{theorem:linear progress} %
Let $\mu$ be a probability distribution on the mapping class group
with finite support whose semi-group support $\langle \supp(\mu)
\rangle_+$ is a non-elementary subgroup. Then there are numbers
$\Klin, \Llin > 0$ and $\clin < 1$, which depend on $\mu$, such that
\[ \P( d( w_n x_0, x_0) \le \Llin n ) \le \Klin \clin^n \]
for all $n$.
\end{theorem}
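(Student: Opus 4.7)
The plan is to use the Masur--Minsky theorem that the curve complex $\mathcal{C}(S_g)$ is $\delta$-hyperbolic (though not locally compact), combined with the non-elementary action of $\langle \supp(\mu) \rangle_+$, to produce positive drift and then upgrade it to exponential concentration. The argument has three main steps.

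First I would establish convergence to the Gromov boundary. Since $\langle \supp(\mu) \rangle_+$ is non-elementary, it contains two independent pseudo-Anosov elements whose fixed points on $\partial\mathcal{C}(S_g)$ are distinct. A standard Kaimanovich-type argument (using that $\mu$ is finitely supported, hence has finite first moment with respect to the curve complex metric by the Lipschitz property of orbit maps) shows the sample path $w_n x_0$ converges almost surely to a random point $\eta_\infty \in \partial \mathcal{C}(S_g)$. The distribution of $\eta_\infty$ defines a $\mu$-stationary hitting measure $\nu$, which is non-atomic because the action is non-elementary.

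Second, I would prove positive linear drift. By Kingman's subadditive ergodic theorem applied to the subadditive cocycle $d(x_0, w_n x_0)$ under the shift on the path space, the limit $\ell = \lim_{n \to \infty} d(x_0, w_n x_0)/n$ exists almost surely. To show $\ell > 0$, I would use that the Gromov product $\gp{x_0}{w_n x_0}{\eta_\infty}$ remains stochastically bounded: if the drift were zero, then with positive probability $w_n x_0$ would stay in bounded neighborhoods of $x_0$, contradicting convergence to a non-atomic boundary measure.

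Third, and hardest, I would upgrade to exponential decay. The approach is to show that shadow sets of distant points have exponentially small hitting measure: there exist $K, c < 1$ with $\nu(S_{x_0}(y, R)) \le K c^{d(x_0, y)}$ for $y$ in the orbit. One proves this by exhibiting a nested family of shadows along geodesic rays towards the boundary and applying the stationarity equation $\nu = \mu * \nu$ iteratively, using quasiconvexity of axes of the independent pseudo-Anosovs and the Gromov product estimates of Propositions \ref{prop:one quasiconvex} and \ref{prop:two quasiconvex} to bound the overlap between shadows. Once exponential shadow decay is in hand, the finite-time estimate $\P(d(w_n x_0, x_0) \le \Llin n) \le \Klin \clin^n$ follows: if the walk travels slowly, then $w_n x_0$ lies in a ball of radius $\Llin n$ around $x_0$, which can be covered by shadows whose total $\nu$-mass (comparing the time-$n$ distribution to $\nu$ via reversibility/convergence) is exponentially small in $n$.

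The main obstacle is the third step. In a cocompact hyperbolic setting one gets exponential decay cheaply from combinatorial counting, but the curve complex is locally infinite and the stabilizers are complicated, so the non-atomicity of $\nu$ alone is not enough --- one genuinely needs a quantitative separation of independent pseudo-Anosov trajectories. An alternative route which sidesteps delicate shadow bookkeeping is Gou\"ezel's pivoting technique: one constructs a regenerative sequence of random times at which the trajectory has made a definite ``Schottky'' progress that cannot be undone by any future increment, so that linear progress decomposes as a sum of independent bounded-below contributions, and then Cram\'er's theorem yields exponential concentration with the constants $\Klin, \Llin, \clin$ depending only on $\mu$.
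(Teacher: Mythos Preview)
The paper does not prove this theorem: it is quoted verbatim from \cite{M2} and used as a black box input to the proof of Proposition~\ref{prop:backtrack}. So there is no ``paper's own proof'' against which to compare your proposal.

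That said, your sketch is a reasonable outline of how the result is actually established in \cite{M2}. The first two steps (boundary convergence and positive drift via subadditivity) are standard. Your diagnosis of the third step is accurate: the exponential upgrade is the heart of the matter, and Maher's original argument does go through shadow-set estimates of exactly the kind you describe, showing $\nu(\overline{S_{x_0}(y,R)}) \le K c^{d(x_0,y)-R}$ and then bootstrapping to control $\mu_n$. Your alternative via Gou\"ezel's pivoting is also valid and is now the preferred route in the literature, since it bypasses the stationarity bookkeeping and works uniformly for non-proper hyperbolic spaces; it postdates \cite{M2} but yields the same conclusion with cleaner constants.

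One minor gap in your Step~3 as written: the sentence ``comparing the time-$n$ distribution to $\nu$ via reversibility/convergence'' is not how the argument goes. The measure $\mu_n$ does not converge to $\nu$ in any sense that would let you transfer mass bounds on balls directly; rather, one bounds $\mu_n(S_{x_0}(y,R))$ for all $n$ uniformly by a supermartingale argument on the Gromov product, and then covers the event $\{d(x_0, w_n x_0) \le Ln\}$ not by a ball but by noting that on this event some intermediate Gromov product $\gp{x_0}{w_m x_0}{w_n x_0}$ must be large, which is itself a shadow event with exponentially small probability. The pivoting route you mention avoids this subtlety entirely.
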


In particular, this result holds when $n=0$, and so the number $\Klin$
must be at least $1$. In order to estimate probabilities involving
Gromov products, we will also use the following estimate for the
probability a sample path lies in a shadow set, as defined in
\eqref{eq:shadow def}, which we shall refer to as exponential decay
for shadows.

\def\Ksh{K_S}
\def\csh{c_S}

\begin{lemma} \cite{M2} \label{lemma:shadow decay} %
Let $\mu$ be a probability distribution on the mapping class group
with finite support whose semi-group support $\langle \supp(\mu)
\rangle_+$ is a non-elementary subgroup of the mapping class
group. Then there are numbers $\Ksh > 0$ and $\csh < 1$, which depend
on $\mu$, such that for any point $y$,
\[ \P ( w_n x_0 \in S_{x_0}(y, R) ) \le \Ksh \csh^{d(x_0, y) - R}, \]
for all $n$.
\end{lemma}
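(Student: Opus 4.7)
My plan is to prove the shadow decay estimate via a nested-shadow argument combined with a uniform escape bound. First, since the inequality is trivial when $d(x_0,y) - R$ is bounded (absorbed by enlarging $\Ksh$), I assume $N := d(x_0, y) - R$ is large. Choose points $x_0 = y_0, y_1, \ldots, y_N = y$ at unit spacing along a geodesic $[x_0, y]$ in the $\delta$-hyperbolic curve complex $\mathcal{C}(S_g)$, and set $S_k := S_{x_0}(y_k, R)$. A direct Gromov-product computation in a hyperbolic space gives the nested inclusion $S_N \subseteq S_{N-1} \subseteq \cdots \subseteq S_0$, perhaps after enlarging $R$ by $O(\delta)$ at the cost of a constant factor in $\Ksh$, and each $S_k$ is $Q$-quasiconvex for a uniform $Q$ depending only on $\delta$.

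Next, introduce first entry times $\tau_k := \inf\{m \ge 0 : w_m x_0 \in S_k\}$. Since $\{w_n x_0 \in S_{x_0}(y, R)\} \subseteq \{\tau_N \le n\} \subseteq \{\tau_N < \infty\}$, it suffices to bound $\P(\tau_N < \infty)$. Applying the strong Markov property at $\tau_{k-1}$,
\[ \P(\tau_k < \infty) \le \Big( \sup_{g} \P\big( w_j x_0 \in g^{-1} S_k \text{ for some } j \ge 0 \big) \Big) \cdot \P(\tau_{k-1} < \infty), \]
where the supremum is over possible values of $w_{\tau_{k-1}}$. Because $\mu$ has finite support, every such $g$ satisfies $g x_0$ within bounded distance of the complement of $S_{k-1}$, that is, near the outer boundary of $S_{k-1}$. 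Iterating will yield $\P(\tau_N < \infty) \le c^N$ once the supremum is shown to be bounded by some $c < 1$ independent of $k$.

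The main obstacle, and the heart of the argument, is establishing this uniform escape bound: from any entry point $g x_0$ on the outer boundary of $S_{k-1}$, the walk avoids ever entering $S_k$ with probability at least $1 - c > 0$, uniformly in $k$ and in the entry point. Here one uses the non-elementarity of $\langle \supp(\mu) \rangle_+$: pick two pseudo-Anosov elements $\alpha, \beta$ in the semi-group support with distinct stable and unstable laminations. A fixed-length word in $\supp(\mu)$ produces one of $\alpha^M$ or $\beta^M$, and by choosing $M$ large the corresponding shift in the sample path moves it by distance of order $M$ in a direction transverse to $[x_0, y]$, with probability bounded below by a constant depending only on $\mu$. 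By Proposition \ref{prop:one quasiconvex} applied to the quasiconvex set $S_{k-1}$, this places the walk outside $S_{k-1}$ with a definite transverse Gromov product at $x_0$. Linear progress (Theorem \ref{theorem:linear progress}) then ensures that, with uniform positive probability, the walk continues to escape in this transverse direction and never returns to the still-smaller $S_k$.

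Uniformity in $k$ holds because all the $S_k$ look identical up to bounded distortion at their outer boundaries — they are nested shadows along a common geodesic — so the escape estimate depends only on the local coarse geometry near the entry point and on $\mu$, not on $k$. Combining these pieces, the recursion $\P(\tau_k < \infty) \le c \cdot \P(\tau_{k-1} < \infty)$ iterates to give $\P(w_n x_0 \in S_{x_0}(y, R)) \le \P(\tau_N < \infty) \le c^N = c^{d(x_0, y) - R}$, which establishes Lemma \ref{lemma:shadow decay} with $\csh = c$ and some appropriate $\Ksh$ absorbing the small-$N$ base case.
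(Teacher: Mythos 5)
This lemma is not proved in the paper at all --- it is quoted as a black box from \cite{M2} --- so your proposal has to be judged against the known arguments rather than against an in-paper proof. Your skeleton (nested shadows along a geodesic $[x_0,y]$, first-entry times $\tau_k$, the strong Markov property, and a per-level escape estimate iterated to give geometric decay) is a sensible reduction, and the bookkeeping issues are fixable: the spacing between consecutive levels must exceed the maximal step size $\max_{s \in \supp(\mu)} d(x_0, s x_0)$, and since each inclusion $S_{x_0}(y_{k+1},R) \subset S_{x_0}(y_k, R+O(\delta))$ loses $O(\delta)$, you should work with sublevel sets of the Gromov product $\gp{x_0}{y}{\cdot}$ (which are genuinely nested) rather than re-using the same parameter $R$ at every level. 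With those repairs, the lemma does reduce to a single uniform escape estimate, exactly as you say.

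The genuine gap is that this uniform escape estimate is asserted rather than proved, and the justification offered cannot prove it. After translating by $g^{-1}$ at the entry time, the set to be avoided is $g^{-1}S_k$, an essentially arbitrary quasiconvex set at bounded distance from $x_0$ (it is contained in a shadow of bounded parameter about a point at distance comparable to the spacing); ``transverse to $[x_0,y]$'' has no meaning in this translated picture. A two-fixed-point argument can indeed guarantee, for a suitably large fixed spacing, that at least one of $\alpha^M x_0$, $\beta^M x_0$ lies a definite distance from $g^{-1}S_k$, but the concluding step ``linear progress then ensures the walk never returns to $S_k$'' is invalid: Theorem \ref{theorem:linear progress} controls only the radial quantity $d(x_0, w_n x_0)$, which is perfectly compatible with the walk marching straight into the unbounded set $g^{-1}S_k$. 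What you actually need is a bound, uniform over all such configurations, strictly less than $1$ on the probability of \emph{ever} hitting a bounded-parameter shadow about a point at fixed distance from the basepoint --- and that is precisely a unit-scale version of the statement being proved, so as written the argument is circular at its key step. There is also a circularity of attribution: the exponential-decay form of linear progress you invoke is, in \cite{M2}, deduced \emph{from} this shadow estimate. Closing the gap requires a different input (in \cite{M2} this role is played by harmonic-measure estimates for shadows and a careful inductive scheme), not transversality plus linear progress.
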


The following estimate for the Gromov product then follows immediately
from the definition of shadow sets, \eqref{eq:shadow def}.
\[ \P \left( \ \gp{x_0}{y}{w_n x_0} \ge d(x_0, y) - R \ \right) \le K
c^{d(x_0, y) - R}. \]

In order to simplify notation, we shall unify the exponential decay
constants in the two results above, by setting $K = \max \{ \Klin,
\Ksh \}$ and $c = \max \{ \clin, \csh \}$. 

We now give a brief overview of the proof. Consider a random walk of
length $n$ starting at a point $g x_0$, reasonably far from the disc
set $\mathcal{D}$, and so the endpoint of the random walk is $g w_n
x_0$. By linear progress, it is very likely that the random walk has
gone a reasonable distance, and by exponential decay for shadows, it
is very unlikely that the Gromov product $\gp{g x_0}{x_0}{g w_n x_0}$
is large, and this Gromov product measures how much the geodesic from
$g x_0$ to $g w_n x_0$ fellow-travels, or backtracks, along the path
from $g x_0$ to $x_0$. In this case, we may then apply Proposition
\ref{prop:one quasiconvex} to estimate the distance from $\mathcal{D}$
to $g w_n x_0$. This is the main case we consider in the proof below,
but we also need to estimate separately the less likely cases in which
the random walk does not go very far, or the Gromov product is so
large that Proposition \ref{prop:one quasiconvex} does not apply. The
final bounds arise from adding the bounds we get in each case.

\begin{proof} (of Proposition \ref{prop:backtrack}.) %
We need to choose appropriate values for $N$ and $R$. In order to make
clear that there is no circularity in our choice of constants, we now
state how we will choose $N$ and $R$. We shall choose
\begin{equation} \label{eq:backtrackR} 
R \ge \max \{ \log(q^2/3 K) / \log(c) + A + C, 2 \log(q^2 / 3 K)
/ \log(c) , B + 2C  \},  
\end{equation}
where $A, B$ and $C$ are the constants from Proposition \ref{prop:one
  quasiconvex}, which only depend on the coarse geometry constants
$\delta$ and $Q$, and $K$ and $c$ are the exponential decay constants,
which depend on $\mu$. We shall then choose
\begin{equation} \label{eq:backtrackN}
N \ge \max \{ 2R / L , \log (q^3 / 3 K) / \log(c) \},  
\end{equation}
with the same notation for constants as above, and where $L$ is the
linear progress constant, which only depends on $\mu$.  We emphasize
that our choice of $R$ only depends on $q$ and $\mu$, and our choice
of $N$ depends on $R$, $q$ and $\mu$.

We wish to estimate the probability that $\phi_R(g w_n x_0)$ takes
certain values, and so we need to estimate the distance from the disc
set $\mathcal{D}$ to $g w_n x_0$, and in the main case we consider we
shall do this by using Proposition \ref{prop:one quasiconvex}. We
shall apply Proposition \ref{prop:one quasiconvex} with $y = g x_0$,
$z = g w_n w_0$, and $x$ the nearest point in the disc set to $g
x_0$. Proposition \ref{prop:one quasiconvex} then says that the
following condition on the Gromov product
\begin{equation} \label{eq:gp condition} 
\gp{g x_0}{x}{g w_n x_0} \le d(x, g x_0) - A,   
\end{equation}
implies the following bound on the distance from $g w_n x_0$ to the
disc set $\mathcal{D}$,
\[  d(\mathcal{D}, g w_n x_0) \ge d(x, g x_0) + d(g x_0, g w_n x_0) - 2 \gp{g
  x_0}{x}{g w_n x_0} - B.    \]
Recall that as $x_0 \in \mathcal{D}$, the final part of Proposition
\ref{prop:one quasiconvex} shows that we may replace $\gp{g x_0}{x}{g
  w_n x_0}$ with $\gp{g x_0}{x_0}{g w_n x_0}$ up to bounded error, so
\eqref{eq:gp condition} is implied by the following condition
\[  \gp{g x_0}{x_0}{g w_n x_0} \le d(x, g x_0) - A - C.     \]
As $d(x, g x_0) = d(\mathcal{D}, g x_0)$, and $d(\mathcal{D}, g x_0)
\le R \phi_R(g x_0)$, and we have assumed $\phi_R(g x_0) = t$, we can
rewrite the condition above as
\begin{equation} \label{eq:condition1}  
\gp{g x_0}{x_0}{g w_n x_0} \le Rt - A - C.
\end{equation}

We shall consider various cases, depending on whether some combination
of condition \eqref{eq:condition1}, and the following condition
\eqref{eq:condition2}, hold. The second condition is that the sample
path has travelled a distance at least $2R$ from $g x_0$ to $g w_n
x_0$, i.e.
\begin{equation} \label{eq:condition2}
d(g x_0, g w_n x_0) \ge 2R.
\end{equation}

We shall consider the following three cases, defined in terms of the
conditions above, which cover all possibilities. The table below
summarizes the three cases, the various possibilities for $\phi_R( g
w_n x_0)$ which may occur given the conditions, and the bounds on the
probabilities that these values of $\phi_R(g w_n x_0)$ occur.

\bigskip

\begin{center}
\begin{tabular}{clll}
Case & Conditions & Value of $\phi_R(g w_n x_0)$ & Probability \\

1 & \eqref{eq:condition1} and \eqref{eq:condition2} hold & $\phi_R( g w_n x_0 ) \le t - r $ & $\le q^{r+1}/3, r \ge 1$ \\

  & & & $\le 1, r = 0 $ \\

2 & \eqref{eq:condition1} fails & $\phi_R( g w_n x_0 ) \ge 0$ & $\le q^{t+1}/3$ \\

3 & \eqref{eq:condition2} fails & $\phi_R( g w_n x_0 ) \ge t - 2$ & $\le q^3 / 3$  \\

\end{tabular}
\end{center}

\bigskip

We now consider each case in turn.

\begin{case}
We first consider the case in which both conditions
\eqref{eq:condition1} and \eqref{eq:condition2} hold, and so we may
apply Proposition \ref{prop:one quasiconvex}.  In this case,
consequences \eqref{eq:prop:one quasiconvex2} and \eqref{eq:prop:one
  quasiconvex3} imply
\[ d(\mathcal{D}, g w_n x_0) \ge R t + d(g x_0, g w_n x_0) - 2 \gp{g
  x_0}{x_0}{g w_n x_0} - 2 C - B,  \]
where $B$ and $C$ only depend on $\delta$ and $Q$.  As condition
\eqref{eq:condition2} holds, we are in the case in which $d(g x_0, g
w_n x_0) \ge 2R$, and as we have chosen $R \ge B + 2C$ this implies
that
\[ d(\mathcal{D}, g w_n x_0) \ge Rt + R - 2 \gp{g x_0}{x_0}{g w_n
  x_0}, \]
and so $\phi_R(g w_n x_0) \le t + 1 - r$ may only occur if $\gp{g
  x_0}{x_0}{g w_n x_0} \ge R r/ 2$. By exponential decay for shadows
this happens with probability at most $K c^{R r/2}$, which is at most
$q^{r}/3$, for $r \ge 1$, and at most $K \ge 1$ for $r=0$, by
\eqref{eq:backtrackR}, as we have chosen $R \ge 2 \log(q^2 / 3K) /
\log(c)$. Therefore this case contributes an amount $q^r/3$ to the
upper bound for the probability that $\phi_R(g w_n x_0) = t + 1 - r$,
for all $1 \le r \le t+1$, and an amount $1$ to the upper bound for
$\phi_R(g w_n x_0) = t+1$.
\end{case}

\begin{case}
We now consider the case in which condition
\eqref{eq:condition1} fails, i.e.
\begin{equation} \label{eq:backtrack2}
\gp{g x_0}{x_0}{g w_n x_0} \ge Rt - C - A. 
\end{equation}
By exponential decay for shadows, Lemma \ref{lemma:shadow decay}, the
probability that condition \eqref{eq:backtrack2} does not hold is at
most $K c^{R t - C - A}$, which is at most $q^{t+1}/3$, for $t \ge 1$,
as by \eqref{eq:backtrackR} we have chosen $R \ge \log(q^2 / 3 K ) /
\log (c) + A + C$. In this case there is no restriction on the
possible value of $\phi_R(g w_n x_0)$, and so this case contributes an
amount of $q^{t+1}/3$ to the upper bound for the probability that
$\phi_R(g w_n x_0) = r$ for every possible value of $r \in \N_0$.
\end{case}

\begin{case}
We now consider the final case in which the sample path travels
distance at most $2R$, i.e. condition \eqref{eq:condition2} fails. By
linear progress with exponential decay, Theorem \ref{theorem:linear
  progress},
\[ \P(d(g x_0 , g w_n x_0 ) \le L n) \le K c^n. \] 
By \eqref{eq:backtrackN} we have chosen $N \ge 2R / L$, and so this
implies that
\[ \P(d(g x_0 , g w_n x_0 ) \le 2R) \le K c^n, \] 
for all $n \ge N$. Also by \eqref{eq:backtrackN} we have chosen $N
\ge \log(q^3 / 3 K) / \log(c)$, and so $K c^n \le q^3
/ 3$ for all $n \ge N$. As the sample path has not gone very far, then
the distance from $\mathcal{D}$ to $g w_n x_0$ can not have decreased
too much. In fact, by the triangle inequality, as
\[ d(\mathcal{D}, g w_n x_0) \ge d(\mathcal{D}, g x_0) - d(g x_0, g
w_n x_0), \] 
this implies that
\[ d(\mathcal{D}, g w_n x_0) \ge R t - 2R,  \]
i.e. $\phi_R( g w_n x_0 ) \ge \phi_R(g x_0 ) - 2$ in this case, which
occurs with probability at most $q^3 / 3$. This case contributes an
amount of $q^3/3$ to the upper bound for those values of $r \in \N_0$
satisfying $r \ge t - 2$.
\end{case}

At least one of the three cases above must occur, and so the desired
upper bounds arise from summing the probabilities in each case, which
we summarize in the table below.

\setlength{\tabcolsep}{2pt}
\begin{center}
\begin{tabular}{rlrlccccc} 
& & & & \multicolumn{4}{c}{upper bound} & \\ 
\multicolumn{2}{c}{$r$} & \multicolumn{2}{l}{Value of $\phi_R(g w_n x_0) = t+1-r$} & Case 1       & Case 2   & Case 3 & Total & \\
$4$ & $\le r \le t+1$ & $0$   & $ \le t + 1 - r \le t-3$                 &
$q^{r}/3 $ & $ + \ q^{t+1}/3$    & & $\le q^{r}$ \\
$1$ & $\le r \le 3$ & $t-2$ & $ \le t + 1 - r \le t$                   &
$q^{r}/3 $ & $ + \ q^{t+1}/3 $  & $ + \ q^3/3$ & $\le q^{r}$ \\
& $r = 0$ & $t+1$ & $ \le t + 1 - r $                        & 1
& $ + \ q^{t+1}/3$   & $ + \ q^3/3$ & $\le 1$ \\ 
\end{tabular}
\end{center}
\setlength{\tabcolsep}{6pt}
The Total column gives an upper bound on probability that any of the
cases occur, which in the final row is the trivial upper bound of $1$.

This completes the proof of Proposition \ref{prop:backtrack}.
\end{proof}

\section{Train tracks and shadows} \label{section:train track}

In this section we collect some useful facts about train tracks and
shadow sets, as defined in \eqref{eq:shadow def}. The key observation
is that a maximal recurrent train track determines a subset of the
curve complex which contains a shadow set.

We briefly review some properties of train tracks, see Penner and
Harer \cite{penner-harer} for more details. A \emph{train track} is a
smoothly embedded $1$-complex $\tau$ on a surface such that edges
(called \emph{branches}) at each vertex (called a \emph{switch}) are
all tangent, and there is at least one edge in both possible tangent
directions at each vertex. Therefore, for each switch, the branches
are divided into two non-empty sets of branches with the same signed
tangent vector, which are called the \emph{incoming} and
\emph{outgoing} branches. The complementary regions are surfaces with
boundaries and cusps, and none of the complementary regions may be
annuli or discs with two or fewer cusps. A train track on a closed
surface is \emph{maximal} if every complementary region is a triangle.

A train route is a smooth path in $\tau$, and so it crosses a switch
by going from an incoming branch to an outgoing branch, or vice
versa. A train track is \emph{recurrent} if every branch is contained
in a closed train route.  A \emph{transverse measure} on $\tau$ is a
non-negative function on the branches which satisfies the \emph{switch
  condition}, i.e. at each switch, the sum of the measures of the
incoming branches is equal to the sum of the measures of the outgoing
branches. Any closed train route induces a transverse measure on
$\tau$ given by the counting measure. We shall write $P(\tau)$ for the
collection of transverse measures supported by $\tau$, which is a
subset of $\mathcal{ML}$, Thurston's space of measured laminations on
the surface, and $P(\tau)$ is a cone on a compact polyhedron in
$\mathcal{PML}$, the projectivization of $\mathcal{ML}$. If the train
track is maximal, then $P(\tau)$ has the same dimension as
$\mathcal{ML}$.

A simple closed curve $x$ is \emph{carried} on $\tau$ if $x$ is
homotopic to a train route. A train track $\sigma$ is \emph{carried}
by a train track $\tau$ if every train route on $\sigma$ is homotopic
to a train route on $\tau$, and we denote this by $\sigma \prec \tau$.

Given a train track $\tau$, we may produce new train tracks by
\emph{splitting} $\tau$, by the following local modification,
illustrated in Figure \ref{pic:splits}, in which a subset of the train
track corresponding to the top configuration is replaced by one of the
three lower configurations.

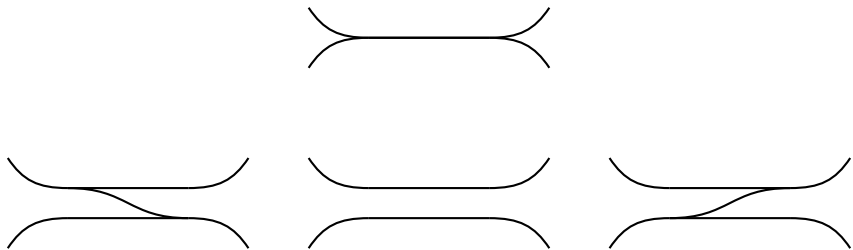
\begin{figure}[H] \begin{center}
\begin{tikzpicture}[scale=0.4]

\def\tangent{
\draw[thick] (-2, 1) .. controls (-1.5, 0.25) and (-1, 0) .. (0, 0);
}

\def\segment{
\tangent
\draw[thick] (0, 0) -- (4, 0);
\begin{scope}[xshift=4cm,x=-1cm]
        \tangent
\end{scope}
}

\segment
\begin{scope}[y=-1cm]
        \segment
\end{scope}

\def\split{
\begin{scope}[yshift=-5cm]
        \segment
\end{scope}

\begin{scope}[yshift=-6cm,y=-1cm]
        \segment
\end{scope}
}

\split

\begin{scope}[xshift=10cm]
        \split
        \draw[thick] (0, -6) .. controls (2, -6) and (2, -5) .. (4, -5);
\end{scope}

\begin{scope}[xshift=-10cm]
        \split
        \draw[thick] (0, -5) .. controls (2, -5) and (2, -6) .. (4, -6);
\end{scope}

\end{tikzpicture}
\end{center} 
\caption{Splitting a train track.}
\label{pic:splits}
\end{figure}

The central configuration is called a \emph{collision} or
\emph{degenerate split}. For our purposes, we will only need to
consider non-degenerate splits.

Given a recurrent maximal train track $\tau$, let $C(\tau)$ be the
collection of simple closed curves carried by $\tau$. 

We start by showing that if two shadow sets intersect, then we may
increase the parameter of one of them by a bounded amount such that
the new shadow set contains both of the original ones.

\begin{prop} \label{prop:intersect} %
There is a number $R$, which only depends on $\delta$, such that if
$S_{x_0}(x_1, R_1)$ and $S_{x_0}(x_2, R_2)$ intersect, then
$S_{x_0}(x_1, R_1) \subset S_{x_0}(x_2, \min \{ d(x_0, x_2) - d(x_0,
x_1) + R_1, R_2 \} + R)$.
\end{prop}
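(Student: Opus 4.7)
The plan is to exploit the standard four-point $\delta$-hyperbolicity inequality
$$ (a \cdot c)_p \;\ge\; \min\bigl\{(a \cdot b)_p,\ (b \cdot c)_p\bigr\} - \delta $$
twice, so that the constant $R$ is produced entirely by two copies of $\delta$ and therefore depends only on $\delta$ as required. The role of the intersection hypothesis is to supply a ``bridge'' point $w$ that lets me combine the two shadow conditions into control on the Gromov product $(x_1 \cdot x_2)_{x_0}$, and that control is then transferred to an arbitrary point $z$ in the shadow of $x_1$.

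Concretely, I would first pick any $w \in S_{x_0}(x_1, R_1) \cap S_{x_0}(x_2, R_2)$. Unwinding the two shadow conditions from the definition \eqref{eq:shadow def} gives $(x_1 \cdot w)_{x_0} \ge d(x_0, x_1) - R_1$ and $(x_2 \cdot w)_{x_0} \ge d(x_0, x_2) - R_2$, and the four-point inequality applied with $(a, b, c, p) = (x_1, w, x_2, x_0)$ immediately produces
$$ (x_1 \cdot x_2)_{x_0} \;\ge\; \min\bigl\{\,d(x_0, x_1) - R_1,\ d(x_0, x_2) - R_2\,\bigr\} - \delta. $$
Then, for an arbitrary $z \in S_{x_0}(x_1, R_1)$, the shadow definition gives $(x_1 \cdot z)_{x_0} \ge d(x_0, x_1) - R_1$, so a second application of the four-point inequality with $(a, b, c, p) = (x_2, x_1, z, x_0)$, combined with the bound on $(x_1 \cdot x_2)_{x_0}$ just derived, yields
$$ (x_2 \cdot z)_{x_0} \;\ge\; \min\bigl\{\,d(x_0, x_1) - R_1,\ d(x_0, x_2) - R_2\,\bigr\} - 2\delta. $$

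The final step is essentially book-keeping: recognise that the last inequality is exactly a shadow inclusion for $x_2$ once the right-hand side is rewritten in the form $d(x_0, x_2) - R'$, where $R'$ is built from the two quantities $d(x_0, x_2) - d(x_0, x_1) + R_1$ and $R_2$ in the way prescribed by the statement. This gives $z \in S_{x_0}(x_2, R')$ with the enlargement parameter in the conclusion and $R = 2\delta$. I do not expect any real obstacle beyond this last algebraic rearrangement; the argument is essentially forced once one notices that two applications of the four-point inequality suffice, and the only place where $\delta$-hyperbolicity of $X$ enters is through those two inequalities.
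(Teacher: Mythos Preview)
Your proposal is correct and follows essentially the same route as the paper's proof: pick a point in the intersection, apply the four-point inequality once to bound $(x_1 \cdot x_2)_{x_0}$, then apply it a second time with an arbitrary $z \in S_{x_0}(x_1, R_1)$ to bound $(x_2 \cdot z)_{x_0}$, and read off $R = 2\delta$. The paper's argument and yours are the same down to the choice of constant.
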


\begin{proof}
Let $y$ be a point in the intersection $S_{x_0}(x_1, R_1) \cap
S_{x_0}(x_2, R_2)$. As $y$ lies in both shadow sets, $\gp{x_0}{x_1}{y}
\ge d(x_1, y) - R_1$ and $\gp{x_0}{x_2}{y} \ge d(x_2, y) -
R_2$. Recall that for any points $x, y$ and $z$, we have
\begin{equation} \label{eq:gp} %
\gp{x_0}{x}{z} \ge \min \{ \gp{x_0}{x}{y}, \gp{x_0}{y}{z} \} - \delta,
\end{equation} 
see for example Bridson and Haefliger \cite{bh}*{III.H.1.20}. Applying
\eqref{eq:gp} to the three points $x_1, x_2$ and $y$ implies that
$\gp{x_0}{x_1}{x_2} \ge \min \{ d(x_0, x_1) - R_1, d(x_0, x_2) - R_2
\} - \delta$. Now let $z$ be a point in $S_{x_0}(x_1, R_1)$, so
$\gp{x_0}{x_1}{z} \ge d(x_0, z) - R_1$. Similarly, applying
\eqref{eq:gp} to the three points $x_1, x_2$ and $z$ implies that
$\gp{x_0}{x_2}{z} \ge \min \{ d(x_0, x_1) - R_1, d(x_0, x_2) - R_2 \}
- 2 \delta$, so we may choose $R = 2 \delta$.
\end{proof}

A shadow set is always non-empty as long as $R \ge 0$, and
furthermore, if the parameter $R$ is sufficiently large, the shadow
sets have non-empty limit sets in $\partial \mathcal{C}(S_g)$. We say
a group $G$ acts coarsely transitively on $X$ if there is a number $K$
such that for any $x$ and $y$ in $X$, there is a group element $g$
such that $d(gx, y) \le K$. The action of $\MCG(S_g)$ on
$\mathcal{C}(S_g)$ is coarsely transitive.

\begin{prop} \cite{bhm} %
Let $X$ be a Gromov hyperbolic space, which need not be locally
compact, but whose isometry group acts coarsely transitively on $X$.
There is a number $R_0 \ge 0$, which only depends on $\delta$, such
that for all $R \ge R_0$ the shadow set $S_{x_0}(x, R)$ has a limit
set in $\partial X$ which contains a non-empty open set, for all $x$
and $x_0$.
\end{prop}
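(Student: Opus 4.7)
The plan is to handle the trivial case separately, then construct a single boundary limit point $\xi$ for the shadow set using coarse transitivity, and finally upgrade $\xi$ to an open subset of $\partial X$ in the limit set using the fact that shadow sets form a neighborhood basis at infinity. If $d(x_0, y) \le R$, then $\gp{x_0}{y}{z} \ge 0 \ge d(x_0, y) - R$ for every $z \in X$, so $S_{x_0}(y, R) = X$ and its limit set is all of $\partial X$; assume henceforth that $d(x_0, y) > R$.

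Let $K$ be the coarse transitivity constant. I would build inductively a sequence $y_0 = y, y_1, y_2, \ldots$ by choosing isometries $g_n$ with $d(g_n(x_0), y_n) \le K$ and setting $y_{n+1} = g_n(y)$. The crucial step is to bound the Gromov product $\gp{y_n}{x_0}{y_{n+1}}$ by a constant $C = C(\delta, K)$: pulling the triangle $[x_0, y_n, y_{n+1}]$ back by $g_n^{-1}$ gives a triangle whose vertices lie near $g_n^{-1}(x_0)$, $x_0$, and $y$, and the required estimate reduces, up to additive error in $K$, to showing $\gp{x_0}{g_n^{-1}(x_0)}{y}$ is small, i.e.\ that $g_n^{-1}(x_0)$ does not itself lie in the shadow of $y$. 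This should follow, after a suitable refinement of the choice of $g_n$, from the fact that unboundedness combined with coarse transitivity forces the isometry group to contain hyperbolic elements with large translation length, by Gromov's classification of isometries of $\delta$-hyperbolic spaces. Once the bound is established, $d(x_0, y_{n+1}) \ge d(x_0, y_n) + d(x_0, y) - 2C$, so $(y_n)$ escapes to infinity with $\gp{x_0}{y_m}{y_n} \to \infty$, converging to some $\xi \in \partial X$, and each $y_n$ lies in $S_{x_0}(y, R)$ once $R \ge R_0(\delta, K)$.

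Finally, the boundary shadows $\overline{S_{x_0}(y_n, R_0)} \cap \partial X$ form a neighborhood basis at $\xi$ in the Gromov topology on $\partial X$, by a nesting argument analogous to Proposition \ref{prop:intersect}. For $n$ sufficiently large, each such set lies in the limit set of $S_{x_0}(y, R)$, exhibiting a non-empty open subset. The main obstacle is verifying that the inductively constructed sequence genuinely escapes to $\partial X$: without local compactness there is no Arzel\`a--Ascoli argument producing a limiting geodesic ray, and one must check directly that the $g_n$ can be chosen to approximately translate along $[x_0, y_n]$ rather than fold back as an approximate involution exchanging $x_0$ and $y_n$, which is where the extra structure of the isometry group beyond mere coarse transitivity has to be invoked.
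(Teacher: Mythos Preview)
The paper does not prove this proposition; it is simply quoted from the external reference \cite{bhm} and used as a black box. So there is no argument in the paper to compare yours against, and your attempt must stand on its own.

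Your proposal has a genuine gap at exactly the point you yourself flag as ``the main obstacle.'' The inductive step requires the bound $\gp{y_n}{x_0}{y_{n+1}} \le C$, which after pulling back by $g_n^{-1}$ becomes a bound on $\gp{x_0}{g_n^{-1}(x_0)}{y}$. Coarse transitivity gives you \emph{some} $g_n$ with $g_n(x_0)$ near $y_n$, but places no constraint whatsoever on $g_n^{-1}(x_0)$; if $g_n$ behaves like an involution swapping $x_0$ and $y_n$, then $g_n^{-1}(x_0)$ sits deep in the shadow of $y$ and the Gromov product blows up. You acknowledge this and assert that it ``should follow, after a suitable refinement of the choice of $g_n$,'' from the existence of hyperbolic isometries, but you neither prove that coarse transitivity on an unbounded hyperbolic space forces the existence of hyperbolic elements, nor explain how, even given a hyperbolic element $h$, you would manufacture a $g_n$ that both lands $x_0$ near the \emph{specific} point $y_n$ and sends $g_n^{-1}(x_0)$ away from the shadow. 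The phrase ``extra structure of the isometry group beyond mere coarse transitivity'' is worrying, since coarse transitivity is the only hypothesis available. As written, the argument is a plausible outline whose central lemma is stated but not proved; to complete it you must either exhibit the refined choice of $g_n$ explicitly, or abandon the inductive construction and instead argue directly that the closure of $S_{x_0}(x, R)$ in $\overline{X}$ meets $\partial X$ (for instance by showing the shadow is unbounded in $X$) and then use the shadow--neighbourhood correspondence from your final paragraph, which is correct.
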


For any number $A > 0$, the shadow set $S_{x_0}(x, R)$ is contained in
$S_{x_0}(x, R + A)$, and furthermore, if $A$ is sufficiently large,
then the limit sets of the shadow sets are strictly nested, and the
distance between them in $\mathcal{C}(S_g)$ is bounded below in terms of
$A$.

\begin{lemma} \cite{M2} \label{lemma:nested shadows} %
Let $X$ be a $\delta$-hyperbolic space, which need not be locally
compact.  There is a number $K$, which depends only on $\delta$,
such that for all positive numbers $A$ and $R$, and any $x, y \in X$
with $d(x, y) \ge A + R + 2K$, the closure of the shadow $S_x(y, R)$
is disjoint from the closure of the complement of the shadow $S_x(y, R
+ A + K)$, i.e.
\[ \overline{ S_x(y, R) } \cap \overline{ X \setminus S_x(y, R + A +
  K) } = \varnothing. \]
Furthermore for any pair of points $a, b \in X$ such that $a \in
S_x(y, R)$ and $b \in X \setminus S_x(y, R + A + K)$, the distance
between $a$ and $b$ is at least $A$.
\end{lemma}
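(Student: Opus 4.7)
The plan is to prove the quantitative distance estimate first and derive the disjoint-closures statement as a direct consequence. The hypothesis $d(x,y) \ge A + R + 2K$ is needed only to ensure that the complement $X \setminus S_x(y, R + A + K)$ is non-vacuous, since for $d(x,y) \le R + A + K$ the larger shadow is all of $X$ and the statement is trivially true.

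For the distance bound, fix $a \in S_x(y, R)$ and $b \in X \setminus S_x(y, R + A + K)$, so $\gp{x}{y}{a} \ge d(x,y) - R$ and $\gp{x}{y}{b} < d(x,y) - (R + A + K)$. I would apply the four-point $\delta$-hyperbolicity inequality used already in Proposition \ref{prop:intersect},
\[ \gp{x}{y}{b} \ge \min\{\gp{x}{y}{a}, \gp{x}{a}{b}\} - \delta, \]
to the points $y, a, b$. Provided $K \ge \delta$, the gap between $\gp{x}{y}{a}$ and $\gp{x}{y}{b}$ exceeds $\delta$, so the minimum cannot be realized by $\gp{x}{y}{a}$. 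This forces the key upper bound $\gp{x}{a}{b} \le \gp{x}{y}{b} + \delta$, which is uniform in $a$ and $b$.

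Next I expand $d(a,b) = d(x,a) + d(x,b) - 2\gp{x}{a}{b}$, substitute the upper bound on $\gp{x}{a}{b}$, and use the identity $2\gp{x}{y}{b} = d(x,y) + d(x,b) - d(y,b)$ to rewrite the right-hand side as
\[ d(a,b) \ge d(x,a) + d(y,b) - d(x,y) - 2\delta. \]
The shadow condition on $a$ gives $d(x,a) \ge \gp{x}{y}{a} \ge d(x,y) - R$. For $d(y,b)$, I combine the rearranged shadow condition $d(y,b) > d(x,b) - d(x,y) + 2(R + A + K)$ with the triangle inequality $d(y,b) \ge d(x,y) - d(x,b)$; averaging the two yields $d(y,b) > R + A + K$ regardless of whether $d(x,b)$ is small or large. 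Plugging back in gives $d(a,b) > A + K - 2\delta$, so $K = 2\delta$ suffices.

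For the closures in $X \cup \partial X$, the crucial observation is that the bound $\gp{x}{a}{b} \le d(x,y) - (R + A + K) + \delta$ is a \emph{constant} depending only on $x, y, R, A, K, \delta$ and not on the particular choice of $a$ or $b$. If sequences $a_n \in S_x(y, R)$ and $b_n \in X \setminus S_x(y, R + A + K)$ both converged to the same Gromov boundary point $\xi \in \partial X$, then $\liminf_{n,m} \gp{x}{a_n}{b_m} = \infty$ by definition of the boundary, contradicting the uniform bound. Combined with the positive separation $d(a,b) \ge A$ inside $X$, this gives disjointness of the closures. The only step requiring genuine attention is the case analysis in the four-point inequality; everything else is routine manipulation of the Gromov product identity and the triangle inequality.
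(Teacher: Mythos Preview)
Your argument is correct. The four-point inequality forces $\gp{x}{a}{b} \le \gp{x}{y}{b} + \delta$ once $K \ge \delta$, and your chain of substitutions leading to $d(a,b) \ge d(x,a) + d(y,b) - d(x,y) - 2\delta$ is clean; the averaging trick for $d(y,b) > R + A + K$ is a nice way to eliminate $d(x,b)$, and $K = 2\delta$ indeed closes the estimate. The boundary argument via the uniform bound on $\gp{x}{a}{b}$ is exactly the right way to handle the closures in $X \cup \partial X$.

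Note, however, that the paper does \emph{not} supply its own proof of this lemma: it is quoted from \cite{M2} and used as a black box. So there is no in-paper argument to compare yours against. Your write-up is a perfectly good self-contained proof; the only cosmetic point is that the hypothesis $d(x,y) \ge A + R + 2K$ is slightly stronger than what you actually need for non-triviality (namely $d(x,y) > R + A + K$), but as you observe, the distance estimate itself uses neither form of the hypothesis.
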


We will use these two properties in the form of the following
elementary corollary, which says that every shadow set contains shadow
sets nested inside it by an arbitrarily large distance.

\begin{cor}
There is a number $R_0 \ge 0$ such that for any number $A \ge 0$,
and for any shadow set $S_{x}(y, R_0)$, there is a shadow set
$S_{x}(z, R_0) \subset S_{x}(y, R_0)$ with $\overline{\mathcal{C}(S_g)
  \setminus S_{x}(y, R_0)} \cap \overline{S_{x}(z, R_0)} =
\varnothing$ and $d(\mathcal{C}(S_g) \setminus S_{x}(y, R_0), S_{x}(z,
R_0) ) \ge A$.
\end{cor}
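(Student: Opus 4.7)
The plan is to build the nested shadow $S_x(z, R_0)$ by picking $z$ inside $S_x(y, R_0)$ but much farther from $x$ than $y$ is. Then $S_x(z, R_0)$ will sit strictly inside $S_x(y, R_0)$, and the gap between the two can be made arbitrarily large by pushing $z$ even deeper. This is exactly the mechanism that Proposition \ref{prop:intersect} (for the inclusion of shadows) and Lemma \ref{lemma:nested shadows} (for the distance bound) together provide. I will fix $R_0$ once and for all as in the preceding proposition, so that every shadow of parameter $R_0$ has non-empty limit set in $\partial \mathcal{C}(S_g)$; the freedom to choose the distance gap $A$ will be handled entirely by how deep $z$ is pushed into the shadow.

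Concretely, let $R_*$ denote the additive constant produced by Proposition \ref{prop:intersect} and let $K$ be the constant in Lemma \ref{lemma:nested shadows}. Given $A \ge 0$, the plan is to choose $z \in S_x(y, R_0)$ with
\[
d(x, z) \ \ge \ \max\bigl\{\, d(x, y) + A + K + R_*,\ \ A + R_0 + 2K\,\bigr\}.
\]
Such $z$ exist because $S_x(y, R_0)$ has non-empty limit set on $\partial \mathcal{C}(S_g)$, so any sequence in $S_x(y, R_0)$ converging to a boundary point supplies points with $d(x, \cdot)$ arbitrarily large. With $z$ so chosen, both $S_x(z, R_0 + A + K)$ and $S_x(y, R_0)$ contain $z$, so Proposition \ref{prop:intersect} applies and yields
\[
S_x(z, R_0 + A + K) \ \subset \ S_x\bigl(y,\, \min\{\, d(x, y) - d(x, z) + R_0 + A + K,\ R_0\,\} + R_*\bigr).
\]
The first requirement on $d(x, z)$ forces the first entry of the minimum to be at most $R_0 - R_*$, so the parameter on the right is at most $R_0$, giving $S_x(z, R_0) \subset S_x(z, R_0 + A + K) \subset S_x(y, R_0)$. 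I then apply Lemma \ref{lemma:nested shadows} with $y$ replaced by $z$, $R$ replaced by $R_0$, and the same $A$: the hypothesis $d(x, z) \ge A + R_0 + 2K$ is built into the second bound on $d(x, z)$, so the closures of $S_x(z, R_0)$ and $\mathcal{C}(S_g) \setminus S_x(z, R_0 + A + K)$ are disjoint and at distance at least $A$. Since $\mathcal{C}(S_g) \setminus S_x(y, R_0) \subset \mathcal{C}(S_g) \setminus S_x(z, R_0 + A + K)$, the same conclusions persist with $S_x(y, R_0)$ in place of $S_x(z, R_0 + A + K)$, which is exactly the claimed statement.

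I do not expect any real difficulty here: the corollary is essentially routine bookkeeping of the constants supplied by Proposition \ref{prop:intersect} and Lemma \ref{lemma:nested shadows}. The one ingredient with any content is the fact that $S_x(y, R_0)$ contains points $z$ with $d(x, z)$ arbitrarily large, and this is precisely what the non-emptiness of the limit set in the preceding proposition is for.
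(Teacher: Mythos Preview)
Your proof is correct and is essentially the argument the paper has in mind; the paper states the corollary without proof, calling it an ``elementary'' consequence of the preceding two results, and your write-up is a faithful unpacking of that. The only cosmetic difference is that you route the inclusion $S_x(z, R_0 + A + K) \subset S_x(y, R_0)$ through Proposition~\ref{prop:intersect}, whereas the paper's phrasing suggests deriving it directly from the Gromov-product inequality; since Proposition~\ref{prop:intersect} is exactly that inequality packaged for shadows, there is no real distinction.
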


We also observe that the complement of a shadow set is roughly a
shadow set.

\begin{lemma} \cite{M2} \label{lemma:shadow complement} %
There is a number $K$, which only depends on $\delta$, such that for
all numbers $R \ge 2K$, and all $x, z \in \mathcal{C}(S_g)$ with $d(x,
z) \ge R + K$,
\[ S_x(z, d(x,z) - R - K ) \subset \mathcal{C}(S_g) \setminus S_z(x, R)
\subset S_x(z, d(x,z) - R + K).  \]
\end{lemma}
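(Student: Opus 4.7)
The plan is to reduce both inclusions to the exact algebraic identity
\[ \gp{x}{z}{w} + \gp{z}{x}{w} = d(x,z), \]
which follows immediately from the defining formula $\gp{p}{a}{b} = \tfrac{1}{2}(d(p,a)+d(p,b)-d(a,b))$ in any metric space, without using $\delta$-hyperbolicity. Combined with the definition \eqref{eq:shadow def}, this identity shows that for any $w$ and any radius $R'$,
\[ w \in S_x(z,R') \ \Longleftrightarrow \ \gp{x}{z}{w} \ge d(x,z) - R' \ \Longleftrightarrow \ \gp{z}{x}{w} \le R'. \]

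Armed with this, I would rewrite each of the three sets appearing in the lemma purely in terms of the single quantity $t(w) := \gp{z}{x}{w}$:
\[ S_x(z, d(x,z)-R-K) = \{w : t(w) \le d(x,z)-R-K\}, \]
\[ \mathcal{C}(S_g) \setminus S_z(x,R) = \{w : t(w) < d(x,z)-R\}, \]
\[ S_x(z, d(x,z)-R+K) = \{w : t(w) \le d(x,z)-R+K\}. \]
The left inclusion is then the elementary inequality $d(x,z)-R-K < d(x,z)-R$, which holds for any $K>0$; the right inclusion is $d(x,z)-R \le d(x,z)-R+K$, which holds for any $K \ge 0$. The role of the side hypotheses $R \ge 2K$ and $d(x,z) \ge R+K$ is simply to guarantee that the radii $d(x,z)-R-K$ and $d(x,z)-R+K$ are non-negative, so that the shadow sets are genuine (non-trivial) subsets of $\mathcal{C}(S_g)$ rather than empty or nonsensical.

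There is essentially no obstacle here: the whole lemma is a bookkeeping consequence of the exact additivity of Gromov products. I expect $K$ to appear in the statement (rather than $0$) only to match the constant used in the neighbouring Lemma \ref{lemma:nested shadows} and to allow a unified citation of \cite{M2}, where shadow sets may be defined via an $(x,z)$-geodesic rather than via the algebraic Gromov product, introducing $O(\delta)$ slack in the equivalence. Under \eqref{eq:shadow def} as stated in this paper, any $K > 0$ works, and in particular the choice $K$ coming from Lemma \ref{lemma:nested shadows} is valid.
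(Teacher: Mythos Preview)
Your argument is correct. The paper does not supply its own proof of this lemma; it is quoted verbatim from \cite{M2} and used as a black box, so there is nothing to compare against beyond confirming soundness. Your reduction via the exact identity $\gp{x}{z}{w}+\gp{z}{x}{w}=d(x,z)$ is valid and, as you observe, shows that with the Gromov-product definition \eqref{eq:shadow def} of shadows adopted here the inclusions hold for any $K>0$; the side hypotheses merely prevent the flanking shadow sets from degenerating to $\varnothing$ or the whole space. Your guess that the constant $K$ in the cited statement absorbs an $O(\delta)$ discrepancy coming from an alternative (geodesic-based) definition of shadows in \cite{M2} is the natural explanation for why a nonzero $K$ appears at all.
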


Finally, we recall the following ``change of basepoint'' result for
shadow sets.

\begin{lemma} \cite{M2} \label{lemma:shadow basepoint} %
There are numbers $A$ and $B$, which only depend on $\delta$, such
that for any $r$, and any three points $x, y$ and $z$ with
$\gp{z}{x}{y} \le r - A$, there is an inclusion of shadows, $S_z (x, r)
\subset S_y (x, s)$, where $s = d(x, y) - d(x, z) + r - B$.
\end{lemma}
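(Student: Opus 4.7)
My plan is to prove the inclusion by a direct calculation in an approximate tree. Fix an arbitrary point $w \in S_z(x, r)$, which by definition means $\gp{z}{x}{w} \ge d(x, z) - r$. Apply Lemma \ref{approximate_tree_lemma} to build a tree $T$ containing the four points $\{x, y, z, w\}$; since the number of vertices is fixed, all distances and Gromov products on these four points transfer between $T$ and $X$ up to an additive error depending only on $\delta$.

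The first step is to establish the combinatorial shape of $T$. The tripod on $\{x, y, z\}$ has its center $c_1$ on the $zx$-segment at tree-distance $\gp{z}{x}{y}^{T}$ from $z$. When $w$ is added, its branch point $c_2$ attaches to one of the three arms. I would show that once $A$ is chosen sufficiently large in terms of $\delta$ (a small multiple of $K_T$), the hypotheses $\gp{z}{x}{y} \le r - A$ and $\gp{z}{x}{w} \ge d(x, z) - r$ force $c_2$ to lie on the $c_1$-to-$x$ arm, strictly further from $z$ than $c_1$. Outside the range where this separation can be guaranteed, the shadow $S_z(x, r)$ already covers essentially all of the space and the conclusion is vacuous.

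With $T$ in this configuration, one computes directly that
\[
\gp{y}{x}{w}^{T} \;=\; \gp{y}{x}{z}^{T} + \gp{z}{x}{w}^{T} - \gp{z}{x}{y}^{T} \;=\; d_T(x, y) - d_T(x, z) + \gp{z}{x}{w}^{T},
\]
where the second equality uses the tripod identity $\gp{y}{x}{z}^{T} = d_T(x, y) - d_T(x, z) + \gp{z}{x}{y}^{T}$. Translating back to $X$ via Lemma \ref{approximate_tree_lemma} and absorbing additive errors into a single constant $B = B(\delta)$ gives $\gp{y}{x}{w} \ge d(x, y) - d(x, z) + \gp{z}{x}{w} - B$. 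Substituting $\gp{z}{x}{w} \ge d(x, z) - r$ and unwinding the definition of the shadow then shows $w \in S_y(x, s)$ for $s = d(x, y) - d(x, z) + r - B$, as claimed.

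The principal obstacle is the combinatorial step of pinning down the tree configuration. This forces one to carefully account for the $\delta$-dependent errors from Lemma \ref{approximate_tree_lemma} when comparing $\gp{z}{x}{y}$ with $\gp{z}{x}{w}$, and that accounting is precisely what fixes the constant $A$; the constant $B$ then absorbs the remaining additive errors incurred when translating the tree identity back to $X$.
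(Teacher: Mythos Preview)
The paper does not prove this lemma; it is quoted from \cite{M2} without proof, so there is no argument in the paper to compare your approach against. Your approximate-tree method is entirely in keeping with how the paper handles its other coarse-geometry statements (Propositions \ref{prop:one quasiconvex}, \ref{prop:two quasiconvex}, \ref{prop:atree3}), so stylistically the approach is appropriate.

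There is, however, a genuine arithmetic slip in your final step. From the tree identity you correctly obtain
\[
\gp{y}{x}{w} \ge d(x,y) - d(x,z) + \gp{z}{x}{w} - B,
\]
and then substitute $\gp{z}{x}{w} \ge d(x,z) - r$. But this substitution gives
\[
\gp{y}{x}{w} \ge d(x,y) - d(x,z) + \bigl(d(x,z) - r\bigr) - B = d(x,y) - (r + B),
\]
so the conclusion is $w \in S_y(x, r+B)$, not $w \in S_y(x,\, d(x,y) - d(x,z) + r - B)$. The two $d(x,z)$ terms cancel, and you have not actually reproduced the parameter $s$ stated in the lemma. (Indeed, a tree computation shows that under the stated hypothesis the two shadows $S_z(x,r)$ and $S_y(x,r)$ coincide up to $O(\delta)$, so the ``right'' $s$ is $r$ plus a constant, independent of $d(x,y)-d(x,z)$; this suggests the formula for $s$ in the paper's statement may itself contain a typo carried over from the source.)

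A secondary gap: your claim that outside the good configuration ``the shadow $S_z(x,r)$ already covers essentially all of the space and the conclusion is vacuous'' is not justified. The alternative configuration occurs when $\gp{z}{x}{w}^T \le \gp{z}{x}{y}^T$, which only forces $d(x,z) \lesssim 2r - A$, not $d(x,z) \lesssim r$; the shadow $S_z(x,r)$ need not be the whole space in that range. A cleaner fix is to observe that in \emph{both} tree configurations one gets $\gp{y}{x}{w}^T \ge d_T(x,y) - r$ (in the second configuration the center of $y,x,w$ coincides with $c_1$, and one uses $\gp{z}{x}{y}^T \ge \gp{z}{x}{w}^T \ge d_T(x,z)-r$), so no case split is needed and the hypothesis on $\gp{z}{x}{y}$ is used only to rule out the third topology where $y$ and $w$ pair against $x$ and $z$.
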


We now provide a link between subsets of the curve complex determined
by train tracks and shadow sets, by showing that every maximal train
track $\tau$ determines a subset $C(\tau)$ of the curve complex which
contains a shadow set.

We shall write $\mathcal{L}_{min}(S_g)$ for the set of all laminations
corresponding to minimal foliations, i.e. those laminations which
neither contain simple closed curves, and are not disjoint from any
simple closed curves. This is a subset of $\vPML(S_g)$ with the
relative topology. The set of ending laminations $\mathcal{EL}(S_g)$
is a quotient of $\mathcal{L}_{min}(S_g)$ by the equivalence relation
of topological equivalence, i.e. two measured laminations are
identified if they correspond to different measures on the same
topological lamination. We shall write $\mathcal{T}(S_g)$ for the
Teichm\"uller space of the surface $S_g$, which is the space of
hyperbolic metrics on the surface. We shall write $\iota$ for the
coarsely well-defined map $\iota \colon \mathcal{T}(S_g) \to
\mathcal{C}(S_g)$, which sends a point in Teichm\"uller space to a
simple closed curve on the surface of shortest length with respect to
the corresponding metric. Klarreich \cite{klarreich}, see also
Hamenst\"adt \cite{hamenstadt}, showed the Gromov boundary of the
complex of curves is homeomorphic to the space of ending laminations.

\begin{theorem} \cite{klarreich} %
The inclusion map $\iota \colon \mathcal{T}(S_g) \to
\mathcal{C}(S_g)$ extends continuously to the portion
$\mathcal{L}_{min}(S_g)$ of minimal laminations of $\vPML(S_g)$ to
give a map $\pi \colon \mathcal{L}_{min}(S_g) \to \partial
\mathcal{C}(S_g)$. The map $\pi$ is surjective and $\pi(\mathcal{L}) =
\pi(\mathcal{L}')$ if and only if $\mathcal{L}$ are topologically
equivalent, and in fact $\pi$ induces a homeomorphism between
$\mathcal{EL}(S_g)$ and $\partial \mathcal{C}(S_g)$.
\end{theorem}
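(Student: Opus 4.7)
The plan is to follow Klarreich's original strategy, exploiting the interplay between Teichm\"uller geometry and the curve complex. The principal input is the Masur--Minsky theorem that $\mathcal{C}(S_g)$ is Gromov hyperbolic, together with their companion result that the systole projection $\iota$, applied along a Teichm\"uller geodesic ray, yields an unparameterized quasigeodesic in $\mathcal{C}(S_g)$. Consequently every Teichm\"uller ray $\gamma(t)$ with minimal and filling vertical foliation $\mathcal{L}\in\mathcal{L}_{min}(S_g)$ determines a well-defined limit point in $\partial\mathcal{C}(S_g)$, namely the endpoint of the quasigeodesic $t\mapsto \iota(\gamma(t))$, and I would take this limit as the definition of $\pi(\mathcal{L})$.

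To see that $\pi$ is well-defined and descends to $\mathcal{EL}(S_g)$, I would note that two Teichm\"uller rays whose vertical foliations are supported on a common topological lamination are forward asymptotic in Teichm\"uller space (combining Masur's criterion for recurrence with the behaviour of typical uniquely ergodic directions), and systole projections of forward asymptotic rays stay a bounded Hausdorff distance apart in $\mathcal{C}(S_g)$. Continuity of the extension then follows from continuity of $\iota$ on the thick part, combined with the fact that nearby Teichm\"uller rays with close vertical foliations have systole projections which fellow-travel along arbitrarily long initial segments before diverging.

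Surjectivity and injectivity of the induced map $\bar\pi\colon\mathcal{EL}(S_g)\to\partial\mathcal{C}(S_g)$ require more work, and constitute the main obstacle. For surjectivity, given a geodesic ray $\{x_n\}$ in $\mathcal{C}(S_g)$ converging to a boundary point, I would build a sequence of recurrent maximal train tracks $\tau_n$ carrying $x_n$ with $\tau_{n+1}\prec\tau_n$, so that the nested compact polyhedra $P(\tau_n)\subset\vPML$ intersect in a nonempty compact set of measured laminations. The crucial step is to verify that this intersection consists of topologically equivalent minimal and filling laminations; this is a thinness argument asserting that an unbounded splitting sequence in $\mathcal{C}(S_g)$ forces the nested sub-polyhedra to collapse onto a single topological lamination, and here finiteness of surface type enters critically through a dimension-count on the cone $P(\tau_n)$. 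For injectivity modulo topological equivalence, I would exploit the link developed in the preceding subsection between train tracks and shadows: the subsets $C(\tau_n)$ associated with a splitting sequence approaching a given ending lamination form a neighbourhood basis in $\mathcal{C}(S_g)\cup\partial\mathcal{C}(S_g)$ at $\pi(\mathcal{L})$, and Proposition~\ref{prop:intersect} together with Lemma~\ref{lemma:nested shadows} translate nested containments of such shadows into separation on the boundary, forcing distinct topological classes to map to distinct boundary points. The same neighbourhood basis argument yields continuity of $\bar\pi^{-1}$, and hence the claimed homeomorphism.
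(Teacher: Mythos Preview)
The paper does not prove this theorem: it is quoted as a result of Klarreich (with an alternative reference to Hamenst\"adt) and then used as a black box, in particular in the proof of Proposition~\ref{prop:maximal}. There is therefore no ``paper's own proof'' to compare your proposal against; what you have written is a sketch of Klarreich's argument itself.

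As such a sketch, the broad architecture---Masur--Minsky hyperbolicity, systole projections of Teichm\"uller rays being unparameterized quasigeodesics, nested train-track polyhedra for surjectivity---is accurate. There is, however, a genuine gap in your well-definedness step. You assert that two Teichm\"uller rays whose vertical foliations are topologically equivalent are forward asymptotic in $\mathcal{T}(S_g)$. This is false in general: Masur's asymptoticity holds only in the uniquely ergodic case, and for a minimal filling lamination admitting several transverse measures the corresponding rays can diverge in Teichm\"uller space. What Klarreich actually shows is that the \emph{curve-complex images} of such rays nonetheless converge to the same boundary point, because the systole map is insensitive to the transverse measure; this requires a separate argument and is precisely where the quotient $\mathcal{L}_{min}(S_g)\to\mathcal{EL}(S_g)$ enters. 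Your appeal to ``typical uniquely ergodic directions'' does not handle the exceptional non-uniquely-ergodic laminations, which are exactly the ones at issue.

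A smaller point: invoking Proposition~\ref{prop:intersect} and Lemma~\ref{lemma:nested shadows} for injectivity is not circular (those are pure $\delta$-hyperbolic facts), but the link you need between splitting sequences and shadow neighbourhoods of boundary points is not supplied by those statements alone; in the paper that link (Proposition~\ref{prop:maximal}) is established \emph{using} Klarreich's theorem, so you would have to prove it independently.
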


We now use this to show that every maximal recurrent train track
$\tau$ determines a subset $C(\tau)$ of the complex of curves which
contains a shadow set.

\begin{prop} \label{prop:maximal} %
For any simple closed curve $x$ and any maximal recurrent train track
$\tau$ there is a simple closed curve $y$ and a number $R \ge R_0$
such that the shadow set $S_{x}(y, R)$ is contained in $ C(\tau)$.
\end{prop}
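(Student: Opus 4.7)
The plan is to exploit the fact that a maximal recurrent train track $\tau$ cuts out a top-dimensional region in $\vPML(S_g)$, and to place a shadow set deep inside the corresponding open region of $\partial \mathcal{C}(S_g)$ so that every simple closed curve in the shadow is forced to be carried by $\tau$.

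First, because $\tau$ is maximal and recurrent, $P(\tau)$ is a full-dimensional polyhedral cone in $\mathcal{ML}(S_g)$, so $PP(\tau) \subset \vPML(S_g)$ has non-empty interior. Choose a uniquely ergodic filling measured lamination $\eta$ in the interior of $PP(\tau)$; since such laminations form a residual set in $\vPML(S_g)$, such a choice is possible. Then $\eta \in \mathcal{L}_{min}(S_g)$, so Klarreich's theorem assigns it a definite point $\pi(\eta) \in \partial \mathcal{C}(S_g)$. Pick a sequence of simple closed curves $y_n$ with $[y_n] \to \eta$ in $\vPML$; for $n$ large, $[y_n]$ lies in the interior of $PP(\tau)$, so each such $y_n$ is carried by $\tau$, and by the continuous extension in Klarreich's theorem, $y_n \to \pi(\eta)$ in $\mathcal{C}(S_g) \cup \partial \mathcal{C}(S_g)$. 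In particular $d(x, y_n) \to \infty$.

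Setting $y = y_n$ for sufficiently large $n$ and $R = R_0$, the containment $S_x(y, R_0) \subset C(\tau)$ is proved by contradiction. Suppose that for arbitrarily large $n$ there is a simple closed curve $z_n \in S_x(y_n, R_0)$ which is not carried by $\tau$. The defining condition $\gp{x}{y_n}{z_n} \ge d(x, y_n) - R_0 \to \infty$, combined with the Gromov-boundary convergence criterion, forces $z_n \to \pi(\eta)$ in $\partial \mathcal{C}(S_g)$. Extract a $\vPML$-subsequential limit $\zeta$ of $[z_n]$; since $PP(\tau)$ is closed in $\vPML$ and each $[z_n] \notin PP(\tau)$, we have $\zeta \notin PP(\tau)$. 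On the other hand, Klarreich's description of $\partial\mathcal{C}(S_g)$ implies that any $\vPML$-subsequential limit of simple closed curves converging to a boundary point $\xi$ in $\mathcal{C}(S_g) \cup \partial \mathcal{C}(S_g)$ is a minimal lamination with $\pi$-image $\xi$. Thus $\zeta$ is minimal with $\pi(\zeta) = \pi(\eta)$, and unique ergodicity of $\eta$ then forces $[\zeta] = [\eta]$ in $\vPML$. But $\eta$ lies in the interior of $PP(\tau)$, so $[\zeta] \in PP(\tau)$, contradicting $\zeta \notin PP(\tau)$.

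The main obstacle is the two-way bridge between $\vPML$-convergence and $\mathcal{C}(S_g)\cup\partial\mathcal{C}(S_g)$-convergence of simple closed curves, which has to be extracted from Klarreich's (and Hamenst\"adt's) continuous extension $\mathcal{T}(S_g) \cup \mathcal{L}_{min}(S_g) \to \mathcal{C}(S_g) \cup \partial \mathcal{C}(S_g)$. The outgoing direction, that $[y_n] \to \eta$ implies $y_n \to \pi(\eta)$, is handled by approximating weighted simple closed curves with Teichm\"uller points (for instance by pinching along the curve). The incoming direction, that every $\vPML$-subsequential limit of a curve-complex convergent sequence is minimal and projects correctly under $\pi$, is a standard consequence of the same machinery. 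A secondary technical point is verifying that uniquely ergodic filling laminations intersect the interior of $PP(\tau)$ non-trivially, which follows from their density in $\vPML(S_g)$.
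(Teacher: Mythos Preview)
Your argument has one slip: from $[z_n] \notin PP(\tau)$ you cannot conclude $\zeta \notin PP(\tau)$ merely because $PP(\tau)$ is closed --- the implication runs the wrong way (points in an open set can limit onto the boundary of its complement). What you do get is $\zeta \notin \mathrm{int}\, PP(\tau)$, since the $[z_n]$ lie in the open set $\vPML \setminus PP(\tau)$ and hence $\zeta$ lies in its closure $\vPML \setminus \mathrm{int}\,PP(\tau)$. This is all you need: you chose $\eta$ in the \emph{interior} of $PP(\tau)$, so $\zeta = \eta$ still gives the contradiction. With that correction the proof goes through.

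Your route is genuinely different from the paper's. The paper argues dynamically: it first shows (via Klarreich) that \emph{some} shadow set $S_x(y,R)$ has the closure of $\iota^{-1}(S_x(y,R))$ in the Thurston compactification missing an open set $U \subset \vPML$, then picks a pseudo-Anosov $\phi$ with $\mathcal{L}^s(\phi) \in \mathrm{int}\,P(\tau)$ and $\mathcal{L}^u(\phi) \in U$ (using density of pseudo-Anosov pairs in $\vPML \times \vPML$), and finally invokes north--south dynamics so that a high $\phi$-iterate of the shadow lands inside $C(\tau)$. Your approach bypasses the pseudo-Anosov machinery entirely: by anchoring at a \emph{uniquely ergodic} point $\eta \in \mathrm{int}\,PP(\tau)$, you force any $\vPML$-limit $\zeta$ of a hypothetical escaping sequence to coincide with $\eta$, which pins down the shadow location directly. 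The trade-off is that you must invoke Masur's density of uniquely ergodic laminations and work out both directions of the $\vPML \leftrightarrow \partial \mathcal{C}(S_g)$ correspondence for simple closed curves (you flag these honestly as the main technical points), whereas the paper uses only one direction of Klarreich but leans on the mapping class group action instead.
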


\begin{proof}
Let $S_{x}(y, R)$ be any shadow set with $R \ge R_0$, with $d(x, y)$
sufficiently large such that the closure of $S_x(y, R)$ is not equal
to all of $\mathcal{C}(S_g)$.

Given a pseudo-Anosov element $\phi$ in $\MCG(S_g)$, we shall write
$(\mathcal{L}^s(\phi), \mathcal{L}^u(\phi))$ for the pair in
$\mathcal{PML} \times \mathcal{PML}$ consisting of its stable and
unstable laminations. Such pairs $(\mathcal{L}^s(\phi),
\mathcal{L}^u(\phi))$ are dense in $\mathcal{PML} \times
\mathcal{PML}$, as $\phi$ runs over all pseudo-Anosov elements in
$\MCG(S_g)$, see for example \cite{M1}*{Lemma 3.4}. Furthermore, a
pseudo-Anosov element $\phi$ acts with \emph{north-south dynamics} on
the Thurston compactification $\mathcal{T}(S_g) \cup \vPML(S_g)$, with
fixed points the stable and unstable laminations $\mathcal{L}^s(\phi)$
and $\mathcal{L}^u(\phi))$ of $\phi$. This means that for any open set
$U$ containing $\mathcal{L}^s(\phi)$ and disjoint from
$\mathcal{L}^u(\phi)$, and for any closed set $V$ disjoint from the
fixed points, there is a number $N$, depending on $U, V$ and $g$ such
that $g^n(V) \subset U$ for all $n \ge N$.

Therefore, it suffices to show that there is a shadow set $S_x(y, R)$,
such that the closure of $\iota^{-1}(S_x(y, R))$ in the Thurston
compactification $\mathcal{T}(S_g) \cup \vPML(S_g)$ is disjoint from
an open set in $\vPML$. Suppose not, then the closure of
$\iota^{-1}(S_x(y, R))$ is dense in $\vPML$. As the ending laminations
$\mathcal{EL}$ are dense in $\vPML$, and their image under $\iota$ is
equal to $\partial \mathcal{C}(S_g)$, this implies that the closure of
the shadow $S_x(y, R)$ is equal to all of $\partial \mathcal{C}(S_g)$,
a contradiction.
\end{proof}

We now show that if the closure of a shadow set $S_x(y, R)$ contains a
limit point of the subgroup supporting the random walks, then there is
a slightly larger shadow set $S_x(y, R + R_1)$ whose closure has
positive measure with respect to the hitting measure $\nu$.

\begin{prop} \label{prop:positive} %
There is a number $R_1 > 0$ such that if $S_{x_0}(y, R)$ contains a
limit point of $H$, then $\nu(\overline{S_{x_0}(y, R + R_1)} ) >
0$. Furthermore, for any number $D$, there is a shadow set $S_{x_0}(z,
R_0) \subset S_{x_0}(y, R + R_1)$, distance at least $D$ from $x_0$,
with $\nu( \overline{S_{x_0}(z, R_0)} ) > 0$.
\end{prop}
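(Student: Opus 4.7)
The plan is to use the fact that the hitting measure $\nu$ on $\partial \mathcal{C}(S_g)$ is supported on the limit set of $\langle \supp(\mu) \rangle_+$, a standard consequence of non-elementary random walk theory on Gromov hyperbolic spaces. Since $H \subseteq \langle \supp(\mu) \rangle_+$, every limit point of $H$ lies in $\supp(\nu)$, so every open neighborhood of such a point in $\partial \mathcal{C}(S_g)$ has positive $\nu$-measure. The task then reduces to realising suitable shadow sets as open neighborhoods (in the bordification) of the given limit point $\xi$.

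For the first statement I would apply Lemma \ref{lemma:nested shadows} to choose $R_1$, depending only on $\delta$, so that $\overline{S_{x_0}(y, R)}$ is contained in the interior of $S_{x_0}(y, R+R_1)$ in the Gromov bordification $\mathcal{C}(S_g) \cup \partial \mathcal{C}(S_g)$. The hypothesised limit point $\xi \in \overline{S_{x_0}(y, R)}$ of $H$ then lies in this interior, and the intersection of the interior with $\partial \mathcal{C}(S_g)$ is an open neighborhood $U$ of $\xi$. Since $\xi \in \supp(\nu)$ we have $\nu(U) > 0$, and since $U \subseteq \overline{S_{x_0}(y, R + R_1)}$ the first claim follows.

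For the second statement I would take a sequence $h_n \in H$ with $h_n x_0 \to \xi$ in the bordification, and choose $n$ large enough that $d(x_0, h_n x_0) \ge D$ and $h_n x_0$ lies deep inside $S_{x_0}(y, R + R_1)$. Standard hyperbolic estimates give $\gp{x_0}{h_n x_0}{\xi} \ge d(x_0, h_n x_0) - O(\delta)$, so $\xi$ lies in the closure of $S_{x_0}(h_n x_0, R_0)$, and by the result of \cite{bhm} quoted earlier this closure contains an open neighborhood of $\xi$ in the bordification. Applying the Gromov-product inequality
\[
\gp{x_0}{y}{w} \ge \min\{ \gp{x_0}{y}{h_n x_0},\, \gp{x_0}{h_n x_0}{w} \} - \delta
\]
to any $w \in S_{x_0}(h_n x_0, R_0)$, together with the fact that $\gp{x_0}{y}{h_n x_0}$ is close to $d(x_0, y)$ for $n$ large, shows $S_{x_0}(h_n x_0, R_0) \subset S_{x_0}(y, R + R_1)$ after possibly enlarging $R_1$ by an $O(\delta)$ amount independent of $y$ and $R$. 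The positivity $\nu(\overline{S_{x_0}(h_n x_0, R_0)}) > 0$ then follows exactly as in the first part.

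The main obstacle, I expect, will be cleanly invoking the identification of $\supp(\nu)$ with the limit set of $\langle \supp(\mu) \rangle_+$ in the generality required --- the semi-group support of $\mu$ need only contain $H$, not equal it --- together with the coarse-geometric bookkeeping needed so that a single $R_1$, depending only on $\delta$ and the basepoint data but not on $y$ or $R$, works uniformly in both parts of the statement.
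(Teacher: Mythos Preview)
Your first paragraph is correct and is a genuinely different route from the paper's. The paper does not invoke the identification $\supp(\nu) = \Lambda(\langle \supp(\mu)\rangle_+)$; instead it argues directly: since $H$ lies in the semigroup support, any $h_i \in H$ satisfies $\mu_n(h_i) > 0$ for some $n$, and then exponential decay for shadows (Lemma~\ref{lemma:shadow decay}) applied to the complement shadow at $h_i x_0$ shows that a definite proportion of sample paths starting at $h_i x_0$ never leave a slightly enlarged shadow, forcing $\nu(\overline{S_{x_0}(y, R+R_1)}) > 0$. Your approach is cleaner conceptually but imports an external fact; the paper's is more self-contained with the tools already on the table.

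Your second paragraph, however, contains a genuine gap. The claimed ``standard hyperbolic estimate'' $\gp{x_0}{h_n x_0}{\xi} \ge d(x_0, h_n x_0) - O(\delta)$ is false. Convergence $h_n x_0 \to \xi$ only gives $\gp{x_0}{h_n x_0}{\xi} \to \infty$; it says nothing about the difference $d(x_0, h_n x_0) - \gp{x_0}{h_n x_0}{\xi}$. In a tree, take $h_n x_0$ to follow a ray toward $\xi$ for $n$ steps and then branch off for $n^2$ steps: then $h_n x_0 \to \xi$, but $d(x_0, h_n x_0) - \gp{x_0}{h_n x_0}{\xi} = n^2$. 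So $\xi$ need not lie in $\overline{S_{x_0}(h_n x_0, R_0)}$, and your construction of the nested shadow fails. The fix within your framework is to center the inner shadow at a point on a geodesic ray from $x_0$ toward $\xi$, not at $h_n x_0$. The paper sidesteps this entirely with a countable covering argument: the closures $\overline{S_{x_0}(z, R_0)}$, as $z$ ranges over vertices with $d(x_0, z) \ge D$, cover the limit set of the original shadow, so at least one of them has positive $\nu$-measure; Proposition~\ref{prop:intersect} then places it inside a bounded enlargement of $S_{x_0}(y, R)$. This avoids any delicate tracking of $\xi$.
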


\begin{proof}
There is a sequence of group elements $( h_i )_{i \in \N}$ such that
$( h_i x_0 )_{i \in \N}$ converges to $\mathcal{L}$ in $S_{x_0}(y,
R)$, so infinitely many of the $( h_i x_0 )_{i \in \N}$ lie in
$S_{x_0}(y, R + R_1)$. Choose one with $\mu_n(h_i) > 0$ for some
$n$. Then by exponential decay for shadows, a definite proportion of
sample paths starting from $h_i x_0$ at time $n$ converge into
$S_{x_0}(y, R + R_2)$, so $\nu( \overline{S_{x_0}(y, R + R_2)} ) > 0$,
as required.

For any $D$, the countable collection of sets $S_{x_0}(z, R_0)$, as
$z$ runs over all vertices of the curve complex $\mathcal{C}(S_g)$
with $d(x_0, z) \ge D$, cover $S_{x_0}(y, R + R_2)$, so at least one of
these has positive measure, and is contained in $S_{x_0}(y, R + R_3)$,
for some $R_3$.
\end{proof}

\section{Local estimates close to the disc set} \label{section:local near}

The main purpose of this section is to show

\begin{prop} \label{prop:local} %
For any complete subgroup $G < \MCG(S_g)$ there is a finitely
generated non-elementary subgroup $H < G$, such that any finitely
supported probability distribution $\mu$, whose semi-group support
$\langle \supp(\mu) \rangle_+$ is a subgroup containing $H$, has the
following property.

For any number $R$ there are numbers $N$ and $\e > 0$, depending only
on $R$ and $\mu$, such that for any mapping class group element $g$,
there is a definite probability $\e$ that the random walk of length
$n$ generated by $\mu$ starting at $g$ is distance at least $R$ from
the disc set $\mathcal{D}$, so in particular
\[ \P( \phi_R(g w_{n} x_0) = 1 \mid \phi_R( g x_0) = 0) \ge \e, \]
for all $n \ge N$
\end{prop}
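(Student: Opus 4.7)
The strategy is to exhibit, for every $g$ with $\phi_R(gx_0)=0$, a shadow set based at $gx_0$ which is charged with probability bounded uniformly below by the walk and which lies entirely in the complement of the $R$-neighbourhood of $\mathcal{D}$. The geometric input that makes this possible is that, since $\mathcal{D}$ is $Q$-quasiconvex, its limit set $\Lambda(\mathcal{D})\subset\partial\mathcal{C}(S_g)$ is a proper closed subset, so the walk can escape in directions disjoint from $\Lambda(\mathcal{D})$.

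I first construct $H$. The completeness of $G$ and the density of pseudo-Anosov endpoints in $\vPML$ let me pick pseudo-Anosov elements $\phi_1,\dots,\phi_k\in G$ whose attracting laminations $\lambda_1,\dots,\lambda_k$ all lie outside $\Lambda(\mathcal{D})$, with pairwise distinct stable and unstable laminations, so that $H=\langle\phi_1,\dots,\phi_k\rangle$ is non-elementary. Applying Propositions~\ref{prop:maximal} and~\ref{prop:positive}, I attach to each $\lambda_i$ a shadow set $S_i=S_{x_0}(y_i,R_0)$ whose closure contains $\lambda_i$, with $d(x_0,y_i)\ge D$ for a constant $D$ chosen large in terms of $R,Q,\delta$, and with hitting measure $\nu(\overline{S_i})\ge \e_0$ for some $\e_0>0$ uniform in $i$. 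The convergence of $w_n x_0$ in distribution to $\nu$, together with the shadow decay of Lemma~\ref{lemma:shadow decay}, yields a threshold $N=N(R,\mu)$ such that $\P(w_n x_0\in S_i)\ge \e_0/2$ for each $i$ and every $n\ge N$.

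The core step is a uniform matching: for each $g$ with $d(\mathcal{D},gx_0)\le R$, I need an index $i(g)$ such that the translated shadow $gS_{i(g)}=S_{gx_0}(gy_{i(g)},R_0)$ lies at distance at least $R$ from $\mathcal{D}$. Translating by $g^{-1}$ this amounts to choosing $S_i$ disjoint from the $R$-neighbourhood of the $Q$-quasiconvex set $g^{-1}\mathcal{D}$, which passes within distance $R$ of $x_0$. By Lemma~\ref{lemma:shadow complement} and Proposition~\ref{prop:one quasiconvex} based at $x_0$, this $R$-neighbourhood is contained in a bad shadow $S^{\mathrm{bad}}_g=S_{x_0}(p_g,R+C)$, where $p_g$ is a closest point to $x_0$ in $g^{-1}\mathcal{D}$ and $C$ depends only on $R,Q,\delta$; the matching problem reduces to ensuring that no such bad shadow contains every $\lambda_i$.

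The main obstacle is making the matching genuinely uniform in $g$, since a fixed finite list of boundary points cannot meet every shadow. It is overcome by exploiting rigidity of the family of bad shadows: the boundary closure of $S^{\mathrm{bad}}_g$ is the translate $g^{-1}\Lambda(\mathcal{D})$ of a fixed proper closed subset of $\partial\mathcal{C}(S_g)$, and the relevant group elements $g$ are constrained to cosets of the handlebody stabiliser of $\mathcal{D}$ carrying $x_0$ into the bounded region $\{y:d(\mathcal{D},y)\le R\}$. Combining density of pseudo-Anosov fixed points in $\vPML$ with a compactness argument on these cosets, I enlarge the family $\{\lambda_i\}$ so that some $\lambda_i$ always lies outside $S^{\mathrm{bad}}_g$ by a definite margin depending only on $R,Q,\delta$. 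Once this is in place, for $n\ge N$ and each $g$ we have $\P(w_n x_0\in S_{i(g)})\ge\e_0/2=:\e$, and on that event $gw_n x_0\in gS_{i(g)}$ satisfies $d(\mathcal{D},gw_n x_0)\ge R$, so $\phi_R(gw_n x_0)\ge 1$, as required.
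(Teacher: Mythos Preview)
Your overall architecture is right, and you have correctly isolated the crux: uniformity in $g$. But the resolution you sketch has a genuine gap.

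First, a technical point: the set you need to avoid is the $R$-neighbourhood of $g^{-1}\mathcal{D}$, an unbounded $Q$-quasiconvex set passing within $R$ of $x_0$. This cannot be contained in a single shadow $S_{x_0}(p_g,R+C)$ with $d(x_0,p_g)<R$; that shadow is all of $\mathcal{C}(S_g)$ since the defining inequality $\gp{x_0}{p_g}{z}\ge d(x_0,p_g)-R-C$ has negative right-hand side. What you really need is that some $\lambda_i$ lies outside $g^{-1}\Lambda(\mathcal{D})$ by a definite margin, and this is exactly where the argument breaks down.

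The compactness argument is the real gap. The set $\{g:d(\mathcal{D},gx_0)\le R\}$ is a union of handlebody-group cosets, and the handlebody group is infinite with noncompact quotient; there is no finiteness here to exploit. More to the point, the translates $g^{-1}\Lambda(\mathcal{D})$ are the limit sets of \emph{all} handlebody structures on $S_g$, and it is far from obvious that any finite list of boundary points has one member outside every such limit set. This is not a soft fact: it is precisely the content of Kerckhoff's theorem (Theorem~\ref{theorem:kerckhoff}), which produces a single maximal recurrent train track $\tau$ such that for \emph{every} handlebody, a bounded number of splittings of $\tau$ yields a train track disjoint from that disc set. The paper's proof takes the finite collection $T$ of all such splittings, places a shadow inside each $C(\tau_i)$, and chooses one pseudo-Anosov with a fixed point in each; the uniformity in $g$ then comes for free from Kerckhoff. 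Your proposal tries to reinvent this finiteness by compactness, but without Kerckhoff (or an equivalent structural input about disc sets) there is no reason such a finite family exists.
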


We will construct such a subgroup $H$ using the following result of
Kerckhoff \cite{Ker}.

\begin{theorem} \cite{Ker}*{Proposition on p36}
\label{theorem:kerckhoff}
There is a recurrent maximal train track $\tau$ on a closed orientable
surface $S_g$, such that for any identification of $S_g$ with the
boundary of a handlebody, $\tau$ can be split at most $-9\chi(S_g)$
times to a recurrent maximal train track $\tau'$ such that $N(\tau')$
is disjoint from the disk set $\mathcal{D}$ of the handlebody. Here
$\chi(S_g)$ is the Euler characteristic of the surface.
\end{theorem}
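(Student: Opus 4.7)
The plan is to construct a single universal maximal recurrent train track $\tau$ on $S_g$ that works for every handlebody identification, and then for each handlebody run a splitting procedure that removes all disc-bounding curves from the carrying set in a uniformly bounded number of steps. The choice of $\tau$ will depend only on $S_g$, but the splitting sequence will depend on the handlebody.

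For the universal track, I would take $\tau$ to be a maximal recurrent train track with sufficiently generic carrying polytope $P(\tau) \subset \mathcal{ML}(S_g)$; a standard way to obtain one is to use the invariant train track of some pseudo-Anosov $\phi \in \MCG(S_g)$. Such a $\tau$ has full-dimensional $P(\tau)$, is splitting recurrent (iterated non-degenerate splits stay maximal and recurrent), and carries a filling measured lamination. No finite set of disc-bounding curves corresponds to a single handlebody, so the genericity of $\tau$ is what lets the same track serve all handlebody identifications.

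Next, fix an identification of $S_g$ with $\partial U$ for a handlebody $U$, with disc set $\mathcal{D}$. Any $c \in C(\tau) \cap \mathcal{D}$ comes with a properly embedded disc $D \subset U$ realising it, and the pattern of how $\partial D$ traverses the branches of $\tau$ provides combinatorial data tied to an integral transverse measure on a subtrack $\sigma \prec \tau$. An innermost-disc / meridian-surgery argument on the intersection of $D$ with the Heegaard surface should identify a large branch of $\tau$ at which a particular non-degenerate split produces $\tau_1 \prec \tau$ that is still maximal and recurrent and strictly reduces a complexity function $c(\tau, \mathcal{D}) \in \mathbb{Z}_{\geq 0}$ counting (weighted) combinatorial data of carried discs. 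Iterating yields a splitting sequence $\tau = \tau_0, \tau_1, \ldots$ along which $c$ is strictly decreasing, and the procedure terminates precisely when $N(\tau_k) \cap \mathcal{D} = \varnothing$.

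The main obstacle, and the heart of the theorem, is the uniform upper bound $-9\chi(S_g)$ on the length of this sequence. A maximal train track on $S_g$ has $-3\chi(S_g) = 6g - 6$ branches, $-2\chi(S_g) = 4g-4$ switches, and complementary regions that are triangles, and the constant $-9\chi(S_g) = 3 \cdot (-3\chi(S_g))$ strongly suggests an accounting of the form ``at most three complexity-reducing splits per branch'' before every carried curve must leave $\mathcal{D}$. I would prove this by tracking an Euler-characteristic / cusp-counting invariant on the complementary regions of the successive $\tau_i$ against the pattern of meridian intersections, and showing each split can only consume a bounded share of this invariant. The local existence of a complexity-reducing split is the standard part; the global counting that yields the constant $-9\chi(S_g)$, independently of the handlebody, is where the real work lies.
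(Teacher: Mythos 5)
First, note that the paper contains no proof of this statement: it is imported verbatim from Kerckhoff's work (the cited Proposition on p.~36), where it is established by an explicit construction of a particular train track together with a concrete count of how the boundaries of compressing discs can traverse it. So the benchmark here is Kerckhoff's argument, and your sketch does not reach it. Your first step already has no content: choosing $\tau$ to be the invariant track of some pseudo-Anosov and appealing to ``genericity'' gives no control over the disc sets, because the family of handlebody identifications is the full $\MCG(S_g)$-orbit of a standard one, and the statement demands a bound that is uniform over this entire orbit. Genericity of $P(\tau)$ in $\mathcal{ML}(S_g)$ is not a property that can be played off against an $\MCG(S_g)$-equivariant family of disc sets; for any maximal recurrent $\tau$ there are identifications whose disc set meets $C(\tau)$ in an a priori unbounded way, and nothing in your setup distinguishes the pseudo-Anosov track. (As a smaller point, arbitrary non-degenerate splits of a maximal recurrent track need not remain recurrent; one must choose splits guided by a carried filling lamination or similar, so even the assertion that your splitting sequences stay in the right class needs an argument.)

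The more serious gap is the second half. Your complexity function $c(\tau,\mathcal{D})$ is never defined: you do not show it is finite (the set of disc-bounding curves carried by $\tau$ is typically infinite), you do not show that some split strictly decreases it while the relevant splits exist, and, crucially, you do not show it is bounded above by $-9\chi(S_g)$ \emph{independently of the handlebody}. You acknowledge this yourself (``where the real work lies''), but that uniform bound is not a technical refinement --- it is the entire content of the proposition, and it is exactly what the paper needs from Kerckhoff (it feeds into Theorem~\ref{theorem:kerckhoff} being applied simultaneously to all translates $g^{-1}\mathcal{D}$ in the proof of Proposition~\ref{prop:local}). An Euler-characteristic heuristic suggesting ``three splits per branch'' is a plausible reading of the constant, but as written it is a guess about the shape of the count, not an argument; without the explicit track and the explicit analysis of how meridian boundaries (waves/cusps) interact with its branches, the proposal does not constitute a proof of the statement.
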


We now prove Proposition \ref{prop:local}.

\begin{proof}(of Proposition \ref{prop:local}.)  %
Let $\tau$ be a recurrent maximal train track, satisfying Theorem
\ref{theorem:kerckhoff}, and let $T$ be the finite collection of
maximal train tracks obtained by splitting $\tau$ at most $-9
\chi(S_g)$ times.

By Proposition \ref{prop:maximal}, the subset of the curve complex
$C(\tau_i)$ corresponding to each maximal train track $\tau_i \in T$
contains a shadow set which we shall denote $S_i = S_{x_0}(y_i,
R_0)$. Furthermore, for any number $R \ge 0$, for each $S_i$ we may
choose a nested shadow set $S'_i = S_{x_0}(y'_i, R_0)$, with
$\overline{S'_i} \cap \overline{S_i \setminus \mathcal{C}(S_g)} =
\varnothing$ and $d(\mathcal{C}(S_g) \setminus S_i, S'_i) \ge R$, for
each $i$.

The subgroup $G$ is complete in $\MCG(S_g)$, and so endpoints of
pseudo-Anosov elements are dense in $\partial \mathcal{C}(S_g)$. In
fact, the pairs $(F_+, F_-)$ of stable and unstable laminations are
dense in $\partial \mathcal{C}(S_g) \times \partial \mathcal{C}(S_g)$.
Each shadow set $S'_i$ contains a non-empty open set in $\partial
\mathcal{C}(S_g)$, so for each $S'_i$, choose a pseudo-Anosov element
$g_i$, at least one of whose limit points lies in $S'_i$. Let $H$ be
the finitely generated subgroup generated by the finite list of
elements $g_i$.

By Proposition \ref{prop:positive}, for any finitely supported
probability distribution $\mu$ whose semi-group support $\langle
\supp(\mu) \rangle_+$ is a group containing $H$, the hitting measure
$\nu( \overline{S_i} ) > 0$. Set $\e = \min \nu( \overline{S_i}
)/2$. As the convolution measures $\mu_n$ weakly converge to $\nu$,
there is an $N$ such that $\mu_n( \overline{S_i} ) \ge \e$ for all $n
\ge N$ and for all $i$.

Now consider a random walk of length $n$ starting from $g x_0$. We
wish to estimate the distance $d(\mathcal{D}, g w_n x_0)$, and by
applying the isometry $g^{-1}$, this is equivalent to considering the
distance $d(g^{-1}\mathcal{D}, w_n x_0)$. By Theorem
\ref{theorem:kerckhoff}, for any disc set, in particular the disc set
$g^{-1} \mathcal{D}$, there is some maximal train track $\tau_i$ in
$T$ disjoint from $g^{-1} \mathcal{D}$, and so if $w_n x_0$ lies in
the nested shadow set $S'_i$ contained in $C(\tau_i)$, then $d(g^{-1}
\mathcal{D}, w_n x_0) \ge R$, and so $\phi_R(g w_n x_0 ) \ge 1$. As
$\mu_n(S'_i) \ge \e$ for all $n \ge N$, this implies that $w_n x_0$
lies in $S'_i$ with probability at least $\e$, and so this completes
the proof of Proposition \ref{prop:local}.
\end{proof}

\section{Exponential decay for distance from the disc set} \label{section:distance from D}

We now use the local estimates for distance from the disc set obtained
in the previous sections to show that the distance from $w_n x_0$ to
the disc set grows linearly with exponential decay.

\begin{prop} \label{prop:distance from D} %
For any complete subgroup $G$ of $\MCG(S_g)$, there is a finitely
generated subgroup $H < G$, such that for any finitely supported
probability distribution $\mu$ whose semi-group support $\langle
\supp(\mu) \rangle_+$ is a subgroup containing $H$, there are numbers
$K, L > 0$ and $c < 1$, which depend on $\mu$, such that for any disc
set $\mathcal{D}$ and any basepoint $x_0$, the distance of $w_n x_0$
from the disc set $\mathcal{D}$ grows linearly with exponential decay,
i.e.
\[ \P( d(\mathcal{D}, w_n x_0) \le Ln   ) \le K c^n.  \]
\end{prop}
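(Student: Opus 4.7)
The plan is to combine the two ``local'' estimates, Proposition \ref{prop:backtrack} (exponential decay of backward jumps when far from $\mathcal{D}$) and Proposition \ref{prop:local} (definite chance of escaping when close to $\mathcal{D}$), via the stochastic domination strategy sketched in the introduction. First I would let $G$ be a complete subgroup of $\MCG(S_g)$ and use Proposition \ref{prop:local} to pick a finitely generated subgroup $H < G$; any finitely supported $\mu$ whose semi-group support contains $H$ is then non-elementary, so Proposition \ref{prop:backtrack} also applies. I would fix a small number $q<1$ (to be optimized later), apply Proposition \ref{prop:backtrack} to obtain parameters $R$ and $N_1$ so that $\P(\phi_R(gw_mx_0)\le t+1-r\mid\phi_R(gx_0)=t\ge1)\le q^r$ for all $m\ge N_1$, and then apply Proposition \ref{prop:local} with this $R$ to obtain an $N_2$ and $\varepsilon>0$ so that $\P(\phi_R(gw_mx_0)\ge1\mid\phi_R(gx_0)=0)\ge\varepsilon$ for $m\ge N_2$. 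Set $N=\max\{N_1,N_2\}$ and consider the subsampled sequence $Z_k=\phi_R(w_{kN}x_0)$ with values in $\N_0$.

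Next I would compare $\{Z_k\}$ to the Markov chain $\{Y_k\}$ on $\N_0$ of Figure \ref{pic:markov-chain}. Although $\{Z_k\}$ is not itself a Markov chain, the estimates above give uniform bounds on the one-step transitions conditioned on the full past: from state $t\ge1$, the probability of landing at or below $t+1-r$ is $\le q^r$, and from state $0$, the probability of landing at $0$ is $\le 1-\varepsilon$. These are precisely the tail bounds satisfied by $\{Y_k\}$, which allows a standard coupling, proceeding one subsampled step at a time and conditioning on the past, to produce a joint realization with $Y_k\le Z_k$ almost surely. This gives the stochastic domination $Y_k\prec Z_k$, so in particular $\P(Z_k\le s)\le\P(Y_k\le s)$ for every $s$ and $k$.

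Then I would establish that $\{Y_k\}$ itself makes linear progress with exponential decay, i.e.\ $\P(Y_k\le L'k)\le K'c'^{k}$ for some $L'>0$ and $c'<1$. For $q$ sufficiently small, the expected one-step drift from any state $t\ge1$ is $p_t-\sum_{r\ge1}rq^{r+1}=1-q-q^2-\cdots-q^{t+1}-q^2/(1-q)^2$, which is bounded below by a positive constant uniformly in $t$; and the state $0$ is exited with probability $\varepsilon$ per step. A standard exponential martingale argument (or equivalently a Chernoff-type bound on the moment generating function of $Y_k$, exploiting the fact that backward jumps have exponentially decaying tails so all exponential moments exist) then yields the desired linear-progress-with-exponential-decay estimate. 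Transferring via the stochastic domination gives the analogous statement for $Z_k$, i.e.\ $\P(Z_k\le L'k)\le K''c''^{k}$.

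Finally I would deinterpolate to bound $X_n=\phi_R(w_nx_0)$: for $n=kN+j$ with $0\le j<N$, the triangle inequality gives $|X_n-Z_k|\le C_0 N$ where $C_0$ depends on $R$ and the maximal displacement of a single step of $\mu$ in the curve complex, so linear progress of $Z_k$ in $k$ upgrades to linear progress of $X_n$ in $n$, possibly with smaller constants. Multiplying by $R$ converts $\phi_R$ back into the actual distance and yields the proposition. The technical heart of the argument is the uniform stochastic domination step: one must check that the bounds from Propositions \ref{prop:backtrack} and \ref{prop:local}, which are stated conditionally on the current group element $g$, in fact imply the stronger conditioning on the full history needed for the coupling, and one must choose $q$ small enough that the comparison Markov chain has strictly positive drift out of every state. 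The subsequent exponential decay for $\{Y_k\}$ and the deinterpolation are routine once these are in place.
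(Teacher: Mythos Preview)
Your proposal is correct and follows essentially the same architecture as the paper: invoke Propositions \ref{prop:backtrack} and \ref{prop:local}, pass to the $N$-step walk, stochastically dominate $\phi_R(w_{kN}x_0)$ by the Markov chain of Figure \ref{pic:markov-chain}, prove linear progress with exponential decay for that chain, and then undo the subsampling. Your coupling justification for the domination is the same content as the paper's appeal to Strassen's theorem, and your deinterpolation step is the paper's observation that one may replace $\mu$ by $\mu_N$ at the outset.

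The one genuine difference is how the comparison chain is handled. You argue via positive drift and an exponential-martingale/Chernoff bound, using that the backward jump distribution has exponential tails so the moment generating function is finite in a neighbourhood of zero. The paper instead shows directly that the chain has spectral radius $\rho<1$ by exhibiting the explicit $t$-superharmonic function $f(k)=(2q)^k$ for $q<1/4$, and then invokes uniform irreducibility (Woess, Lemma 8.1) to get $p^{(n)}(0,j)\le A^j\rho^n$, whence $\P(Y_n\le Ln)\le Ln\,A^{Ln}\rho^n$ decays exponentially for $L$ small. Both routes work; yours is more probabilistic and would generalize more readily to other increment laws, while the paper's is shorter and gives the constants with essentially no work once the superharmonic function is guessed.
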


We show this by comparing the distribution of the random variables
$d(\mathcal{D}, w_n x_0)$ with a Markov chain on $\N_0$ which gives an
upper bound on the probability that the random variable takes small
values. Ultimately, we show the Markov chain $(\N, P_0)$ has spectral
radius $\rho(P)$ strictly less than $1$, and this gives the
exponential decay result we require.

One minor technical point is that the local estimates hold for all $n
\ge N$. We now observe that if an exponential decay estimate holds for
the iterated random walk $w_{nN}$ generated by the $N$-fold
convolution measure $\mu_{N}$, then a similar exponential decay
estimate holds for $w_n$, though for different constants.

\begin{prop}
Let $\mu$ be a finitely supported probability distribution such that
for some $N > 0$, there are numbers $K, L > 0$ and $c < 1$ such that
\[ \P( d(\mathcal{D}, w_{nN} x_0) \le Ln   ) \le K c^n.  \]
Then there are numbers $L' > 0$ and $c' < 1$ such that
\[ \P( d(\mathcal{D}, w_{n} x_0) \le L'n   ) \le K {c'}^n,  \]
for all $n$.
\end{prop}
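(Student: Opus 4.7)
The plan is straightforward: decompose $n = kN + r$ with $0 \le r < N$, and use the triangle inequality together with finite support of $\mu$ to transfer the estimate along the subsequence $\{w_{kN}\}$ to an estimate along the full sequence $\{w_n\}$.

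First I would observe that since $\mu$ has finite support, the quantity
\[ M = \max \{ d(x_0, g x_0) : g \in \supp(\mu) \} \]
is finite. One step of the random walk can move the orbit point by at most $M$ in the curve complex, so for any realization of the walk,
\[ d(w_{kN} x_0, w_n x_0) \le r M \le NM. \]
The triangle inequality then gives
\[ d(\mathcal{D}, w_n x_0) \ge d(\mathcal{D}, w_{kN} x_0) - NM. \]

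Next, fix any $L'$ with $0 < L' < L/N$. A direct computation shows that for $n$ at least some threshold $n_0$ depending only on $L, L', N, M$, the inequality $L k - NM \ge L' n$ holds whenever $k = \lfloor n/N \rfloor$, since $Lk \ge L(n-N)/N = Ln/N - L$ and $Ln/N - L - NM - L'n \to +\infty$ as $n \to \infty$. Consequently, for all $n \ge n_0$,
\[ \P(d(\mathcal{D}, w_n x_0) \le L' n) \le \P(d(\mathcal{D}, w_{kN} x_0) \le L k) \le K c^k. \]
Because $k \ge (n - N + 1)/N$, we get $c^k \le c^{(1-N)/N} (c^{1/N})^n$, which is an exponentially decaying function of $n$ with base $c^{1/N} < 1$. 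For the finitely many $n < n_0$ the probability is at most the trivial bound $1$, and choosing $c' \in (c^{1/N}, 1)$ sufficiently close to $1$ (and, if needed, enlarging the multiplicative constant to absorb the factor $c^{(1-N)/N}$ and the initial segment) yields a single uniform bound of the form claimed, with $c' < 1$ and $L' > 0$ as above.

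No step here is deep — the content is simply that the orbit map has uniformly bounded $\mu$-step displacement, so linear progress with exponential decay along the $N$-subsequence passes to the full sequence. The only real obstacle is the bookkeeping of constants: one must ensure $L'$ is chosen strictly less than $L/N$ so that the residual term $NM$ can be absorbed, and one must pick $c'$ strictly greater than $c^{1/N}$ so that the base change from $c^k$ to $(c')^n$ leaves enough slack to handle the finitely many small values of $n$ via a trivial bound.
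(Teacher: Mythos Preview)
Your argument is correct and follows the same approach as the paper: use the bounded step size from finite support and the triangle inequality to pass from the $N$-subsequence to the full sequence, then rescale the exponential base by $c^{1/N}$. Your bookkeeping is in fact more careful than the paper's one-line sketch; the only small remark is that the statement keeps the same constant $K$, but since $K \ge 1$ (apply the hypothesis at $n=0$, noting $x_0 \in \mathcal{D}$), your device of taking $c'$ close enough to $1$ already handles the initial segment without enlarging $K$.
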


\begin{proof}
As $\mu$ has finite support, with diameter $R$ say, $d(\mathcal{D},
w_{nN + k} x_0 ) \le d(\mathcal{D}, w_{nN} x_0) + RN$, for any $ 0 \le
k \le N$, so we may choose $L' = L + RN$ and $c' = c^{1/N}$.
\end{proof}

This shows that by replacing $\mu$ with $\mu_{N}$, we may assume that
the local estimates hold for $N = 1$, and we shall do this for the
remainder of this section to simplify notation. We now consider the
sequence of random variables $X_n \colon \Omega \to \N_0$ defined by
$X_n(\omega) = \phi_R(w_{n} x_0 )$.

We wish to compare the distributions of the $X_n$ with the
distributions $X'_n$ arising from the following Markov chain $(\N_0,
P)$, starting with total mass $1$ at $0 \in \N_0$ at time $n = 0$.  We
shall write $p(i, j)$ for the probability you go from $i$ to $j$.
\[ p(0, j) = \left\{ \begin{array}{cl} 1 - \e & \text{ if } j = 0 \\
\e & \text{ if } j = 1 \\ 0 & \text{ if } j \ge 2 \\ \end{array}
\right. \]
and for $i > 0$,
\[ p(i, j) = \left\{ \begin{array}{cl} q^{i - j + 1} & \text{ if } j
\le i \\ 1 - q - q^2 - \cdots q^{i+1} & \text{ if } j = i + 1 \\ 0 &
\text{ if } j \ge i + 2 \end{array} \right. \]
Figure \ref{pic:markov-chain} illustrates the first few vertices of
this Markov chain.

Given a probability measure $P$ on $\N_0$, we shall write $F_P$ for
the cumulative distribution function of $X$, i.e.
\[ F_P(n) = \sum_{i = 0}^n P( i ). \]
Given two probability measures $P$ and $P'$ on $\N_0$, we say that
$P'$ \emph{stochastically dominates} $P$ if $F_P(n) \ge F_{P'}(n)$ for
all $n$, and we shall denote this by $P \dom P'$.  Similarly, given a
random variable $X$ which takes values in $\N_0$, i.e. $X \colon (I,
\P) \to \N_0$, we shall write $F_X$ for the cumulative distribution
function of $X$. We say a random variable $X$ \emph{stochastically
  dominates} a random variable $X'$ if $F_{X}(n) \le F_{X'}(n)$ for
all $n$, and we shall denote this by $X' \dom X$.

Let $\{ X_n \}$ be a sequence of random variables with values in
$\N_0$. The \emph{transition kernels} for the sequence are the
measures on $\N_0$ given by
\[ K_n(i_1, \ldots, i_{n-1})(A) = \P(X_n \in A \mid (X_1, \ldots,
X_{n-1}) = (i_1, \ldots i_{n-1}) ), \]
where $A \subset \N_0$.  If the transition kernels for two sequences of
random variables $\{ X_n \}$ and $\{ X'_n \}$ satisfy $K'_n(i_1,
\ldots i_{n-1}) \dom K_n(i_1, \ldots i_{n-1})$ for all $n$ and $i_1,
\ldots i_{n-1}$, then in fact $X'_n \dom X_n$ for all $n$. We now
state a precise form of this result, which is often referred to as
Strassen's Theorem, or the Stochastic Domination Theorem, see Lindvall
\cite{lindvall}*{Chapter IV}. This result holds in much greater
generality, and the version we state here is a simplified one
sufficient for our purposes.

\begin{theorem} \cite{lindvall}*{Theorem 5.8} %
Let $\{ X_n \}$ and $\{ X'_n \}$ be sequences of random variables with
values in $\N_0$ such that $X'_0 \dom X_0$, and $K'_n(i_1, \ldots
i_{n-1}) \dom K_n(i_1, \ldots i_{n-1})$, for all $n$ and $i_1, \ldots
i_{n-1}$. Then
\[ X'_n \dom X_n,   \]
for all $n$.
\end{theorem}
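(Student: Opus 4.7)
My plan is to construct a coupling of the two sequences, i.e.\ a joint realization on a common probability space, with the property that $X'_n \le X_n$ almost surely for every $n$. The desired stochastic domination $X'_n \dom X_n$, which unpacks as $F_{X_n}(t) \le F_{X'_n}(t)$, then follows at once from the monotonicity of the indicator $\mathbf{1}_{\{\cdot \le t\}}$ and taking expectations.

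The basic ingredient is the single-variable Strassen lemma: whenever $Y' \dom Y$ on $\N_0$, there is a coupling $(\widetilde{Y}, \widetilde{Y}')$ with $\widetilde{Y} \sim Y$, $\widetilde{Y}' \sim Y'$, and $\widetilde{Y}' \le \widetilde{Y}$ almost surely. An explicit realization is the quantile coupling: draw $U$ uniform on $[0,1]$ and set $\widetilde{Y} = F_Y^{-1}(U)$, $\widetilde{Y}' = F_{Y'}^{-1}(U)$, where $F^{-1}(u) = \inf \{ n \in \N_0 : F(n) \ge u \}$; the hypothesis $F_Y \le F_{Y'}$ translates pointwise to $F_Y^{-1} \ge F_{Y'}^{-1}$, which gives the desired inequality.

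With this in hand, I would proceed by induction on $n$, building a joint law for $(X_k, X'_k)_{k \le n}$. The base case $n = 0$ is the quantile coupling applied to $X_0$ and $X'_0$ using the assumption $X'_0 \dom X_0$. For the inductive step, assuming $(X_k, X'_k)_{k \le n-1}$ has been constructed so that $X'_k \le X_k$ for each $k \le n-1$, I would adjoin an independent uniform $U_n$ and apply the quantile transform to the conditional laws $K_n(X_1, \ldots, X_{n-1})$ and $K'_n(X'_1, \ldots, X'_{n-1})$ to produce $(X_n, X'_n)$ with $X'_n \le X_n$. Passing from the finite-$n$ couplings to a coupling of the full sequences is a routine application of the Ionescu--Tulcea extension theorem.

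The main obstacle is controlling the mismatch between pasts: the hypothesis only asserts $K'_n(i_1, \ldots, i_{n-1}) \dom K_n(i_1, \ldots, i_{n-1})$ at equal histories, whereas the inductive step compares $K'_n$ evaluated at the $X'$-past with $K_n$ evaluated at the $X$-past, and by construction these two pasts differ (though the $X'$-past is pointwise smaller). Bridging this gap requires a monotonicity of the kernels in the conditioning history, and in the setting at hand this is transparent from the explicit transition probabilities of $(\N_0, P)$ displayed in Figure \ref{pic:markov-chain}: a direct check using the formula $p(i, j) = q^{i-j+1}$ shows $p(i, \cdot) \dom p(j, \cdot)$ whenever $i \le j$, from which one deduces $K'_n(i'_1, \ldots, i'_{n-1}) \dom K'_n(i_1, \ldots, i_{n-1})$ for $i'_k \le i_k$. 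Chaining this with the hypothesis gives $K'_n$-at-$X'$-past $\dom$ $K_n$-at-$X$-past, which is precisely the input needed for the single-variable lemma to close the induction.
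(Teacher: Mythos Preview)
The paper does not supply a proof of this statement; it is quoted from Lindvall as a black box. Your coupling argument via the quantile transform is the standard technique and is essentially what Lindvall does.

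More notably, you have put your finger on a real issue: the theorem as literally stated in the paper is false. The hypothesis $K'_n(i_1, \ldots, i_{n-1}) \dom K_n(i_1, \ldots, i_{n-1})$ at \emph{equal} histories is not enough. For a counterexample take $X_0 = X'_0 = 0$, let $X_1$ be uniform on $\{0,1\}$ while $X'_1 \equiv 0$, and set $K_2(0) = \delta_{100}$, $K_2(1) = \delta_0$, $K'_2(0) = \delta_{50}$, $K'_2(1) = \delta_0$; then every kernel comparison holds, yet $\P(X_2 \le 0) = \tfrac12 > 0 = \P(X'_2 \le 0)$, so $X'_2 \dom X_2$ fails. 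Lindvall's actual Theorem~5.8 carries a monotonicity hypothesis on the kernels which the paper's ``simplified version sufficient for our purposes'' has silently dropped. Your fix --- verifying directly that the Markov chain $(\N_0, P)$ has stochastically monotone transitions, $p(i,\cdot) \dom p(j,\cdot)$ for $i \le j$, and chaining this with the stated hypothesis --- supplies exactly the missing ingredient. So your argument does establish what the paper actually needs for its application, even though no argument can establish the statement as written.
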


As in our case $X'_0 = X_0$, in order to show that $X'_n \dom X_n$, it
suffices to show that the transition kernels satisfy $K'_n \dom K_n$,
and we show this using the local estimates, Propositions
\ref{prop:backtrack} and \ref{prop:local}.

\begin{prop}
For all $n$, and all $i_1, \ldots, i_{n-1}$, the transition kernels
for $\{ X_n \}$ and $\{ X'_n \}$ satisfy
\[ K'_n(i_1, \ldots, i_{n-1}) \dom K_n(i_1, \ldots, i_{n-1}). \]
\end{prop}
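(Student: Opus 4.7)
The plan is to reduce the required stochastic domination $K'_n(i_1, \ldots, i_{n-1}) \dom K_n(i_1, \ldots, i_{n-1})$ to the pointwise cumulative inequality $F_{K_n}(k) \le F_{K'_n}(k)$ for every $k \in \N_0$, and then to extract that inequality directly from the two local estimates proved earlier. Since $(w_n)$ is itself a Markov chain on $G$, I would first use the tower property to pass from conditioning on the full coarse history $(X_1, \ldots, X_{n-1}) = (i_1, \ldots, i_{n-1})$ to conditioning on $w_{n-1} = g$ with $g$ ranging over the fiber $\{ g \in G : \phi_R(gx_0) = i_{n-1} \}$. Since Propositions \ref{prop:local} and \ref{prop:backtrack} both give upper bounds on $\P(X_n \le k \mid w_{n-1} = g)$ that are uniform in $g$ across such a fiber, these bounds survive the averaging over $w_{n-1}$ and bound $F_{K_n}(k)$ by the same quantity.

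With that reduction in place, I would split into the two cases that mirror the two rows of the definition of $P$. In the case $i_{n-1} = 0$, the Markov chain gives $F_{K'_n}(0) = 1 - \e$ and $F_{K'_n}(k) = 1$ for $k \ge 1$, and Proposition \ref{prop:local} supplies $\P(X_n \ge 1 \mid w_{n-1} = g) \ge \e$, hence $F_{K_n}(0) \le 1 - \e$. In the case $i_{n-1} = t \ge 1$, a direct summation of the geometric row $p(t, \cdot)$ gives
\[ F_{K'_n}(t + 1 - r) = q^r + q^{r+1} + \cdots + q^{t+1} \ge q^r \]
for $1 \le r \le t+1$, and Proposition \ref{prop:backtrack} supplies the matching upper bound $\P(X_n \le t + 1 - r \mid w_{n-1} = g) \le q^r$ on the fiber over $t$; for $k \ge t+1$ the inequality $F_{K_n}(k) \le 1 = F_{K'_n}(k)$ is trivial, as is the corresponding inequality for $k \ge 1$ in the first case.

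I expect no essential obstacle. Propositions \ref{prop:local} and \ref{prop:backtrack} were engineered precisely to match the two rows of the transition matrix $P$, so once the conditioning is reduced to the fiber, the pointwise verification is routine arithmetic with a geometric tail. The reduction at the start of this section already lets us assume both local estimates are available at every single time step, by replacing $\mu$ with the iterate $\mu_N$, so the step-by-step comparison with the Markov chain on $\N_0$ is literally available term by term. The main conceptual point is therefore the tower-property reduction from history-based conditioning to single-step conditioning on $w_{n-1}$, which is a soft use of the Markov property of $(w_n)$ combined with the uniformity of the local bounds in the group variable.
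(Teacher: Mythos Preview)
Your proposal is correct and follows essentially the same approach as the paper: split on whether $i_{n-1} = 0$ or $i_{n-1} \ge 1$, invoke Propositions \ref{prop:local} and \ref{prop:backtrack} respectively, and compare cumulative distribution functions. Your treatment is in fact slightly more careful than the paper's in two respects: you make the tower-property reduction from conditioning on the full coarse history $(X_1,\ldots,X_{n-1})$ to conditioning on $w_{n-1}=g$ explicit (the paper simply writes $\P(\,\cdot \mid X_{n-1}=k)$ without comment), and you use the CDF bound of Proposition \ref{prop:backtrack} directly rather than passing through the pointwise bound $\P(X_n = k-i)\le q^{i+1}$ and then re-summing.
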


\begin{proof}
As the random variables $\{ X'_n \}$ arise from a Markov chain,
$K'_n(i_1, \ldots, i_{n-1})$ only depends on the value of $i_{n-1}$,
and may be computed from the defining transition probabilities $p(i,
  j)$ of the Markov chain.

We first consider the case in which $i_{n-1} = 0$.  By Proposition
\ref{prop:local},
\begin{equation} \label{eq:epsilon} %
\P( X_{n}(\omega) = 0 \mid X_{n-1}(\omega) = 0 ) \le 1 - \e,
\end{equation}
for all $n$, and writing this out in terms of the cumulative
probability distributions gives
\[ F_{K_n(i_1, \ldots, i_{n-2}, 0)}(0) \le F_{K'_n(i_1, \ldots
  i_{n-2}, 0)}(0).  \]
By definition of the Markov chain, the value of $X'_n$ may not
increase by more than one from the value of $X'_{n-1}$, and so
$F_{K'_n(i_1, \ldots i_{n-2}, 0)}(1) = 1$, and so this shows that
\[ K'_n(i_1, \ldots   i_{n-2}, 0) \dom K_n(i_1, \ldots
  i_{n-2}, 0) \]
for all $n$ and all $i_1, \ldots, i_{n-2}$.

We now consider the case in which $i_{n-1} > 0$.  By Proposition
\ref{prop:backtrack},
\begin{equation} \label{eq:q} %
\P( X_{n}(\omega) = k-i \mid X_{n-1}(\omega) = k ) \le q^{i+1}, 
\end{equation}
for all $0 \le i \le k$, and for all $n$, and so by summing over
values of $i$ with $l \le i \le k$, this implies that
\[ F_{K_n(i_1, \ldots i_{n-2}, k)}(l) \le F_{K'_n(i_1, \ldots
  i_{n-2}, k)}(l), \]
for all $l \le k$.  Again, by definition of the Markov chain, the
value of $X'_n$ may not increase by more than $1$ at any step, and so
$F_{K'_n(i_1, \ldots i_{n-2}, k)}(k+1) = 1$, and so this shows that
\[ K'_n(i_1, \ldots   i_{n-2}, k) \dom K_n(i_1, \ldots
  i_{n-2}, k), \]
for all $n$ and all $i_1, \ldots i_{n-2}$, and with $k > 0$.
\end{proof}

We now show that the Markov chain $(\N_0, P)$ has spectral radius
$\rho(P) < 1$. If $f$ is a function on $\N_0$ we shall write $P f$ to
denote the function $P f(k) = \sum p(k, j) f(j)$.

\begin{prop}
If $q <  1/4$ then the Markov chain $(\N_0, P)$ has spectral radius
$\rho < 1$.
\end{prop}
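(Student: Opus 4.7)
My plan is to exhibit an explicit positive super-harmonic function for $P$. Specifically, I would try $f \colon \N_0 \to \R_+$ defined by $f(k) = \alpha^{-k}$, with $\alpha > 1$ to be chosen, and aim to establish the pointwise inequality $Pf \le \lambda f$ for some $\lambda < 1$; an iteration argument will then convert this into the required spectral bound.

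For $k > 0$, substituting $i = k - j$ in the transition sum and bounding the geometric tail by its infinite sum (valid when $q\alpha < 1$) gives
\[
Pf(k) \le q\alpha^{-k}\sum_{i=0}^{\infty}(q\alpha)^i + \alpha^{-(k+1)}, \qquad \text{i.e.} \qquad \frac{Pf(k)}{f(k)} \le \frac{q}{1 - q\alpha} + \frac{1}{\alpha}.
\]
The right-hand side depends only on $\alpha$, and a quick variational calculation shows it is minimized at $\alpha = 1/(2q)$, where it attains the value $4q$. For $k = 0$ the transitions are $p(0, 0) = 1 - \e$ and $p(0, 1) = \e$, so $Pf(0)/f(0) = 1 - \e(1 - 1/\alpha)$, which is strictly less than one whenever $\alpha > 1$ and $\e > 0$.

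Setting $\alpha = 1/(2q)$ and $\lambda = \max\{4q,\ 1 - \e(1 - 2q)\}$, the hypothesis $q < 1/4$ delivers $\lambda < 1$ together with $Pf \le \lambda f$ pointwise. Iterating yields $P^n f \le \lambda^n f$, which rearranges to the pointwise heat-kernel estimate
\[
p^{(n)}(k, j) \le \lambda^n \alpha^{j - k}.
\]
Taking $j = k$ gives $p^{(n)}(k, k) \le \lambda^n$, and hence $\rho(P) = \limsup_{n} p^{(n)}(k, k)^{1/n} \le \lambda < 1$, as required.

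The only nontrivial step is choosing the right test function; the exponential weight $f(k) = (2q)^k$ is natural because the backward jumps of the chain are themselves geometric with parameter $q$, and it is the optimization in $\alpha$ that pins down the sharp threshold $q < 1/4$. As a bonus, the same super-solution immediately yields the pointwise bound $p^{(n)}(0, j) \le \lambda^n \alpha^j$, from which $\P(X'_n \le Ln) \le (Ln+1)(\lambda \alpha^L)^n$ follows for any $L$ with $\lambda \alpha^L < 1$; this is exactly the linear-progress-with-exponential-decay input that Section~\ref{section:distance from D} needs to feed into the stochastic domination argument.
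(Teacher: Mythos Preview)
Your proof is correct and essentially identical to the paper's: both use the exponential test function $f(k)=(2q)^k$ and verify $Pf\le\lambda f$ with $\lambda=\max\{4q,\,1-\e(1-2q)\}$, the only cosmetic differences being that you optimize over $\alpha$ to motivate the choice and that you derive $\rho\le\lambda$ directly by iterating the superharmonic inequality rather than citing Woess.
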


\begin{proof}
Recall from Woess \cite{woess}*{Section 7} that $\rho \le t$ if there
is a strictly positive $t$-superharmonic function $f$ on $\N_0$, i.e
the function $f$ satisfies $f(k) > 0$ for each $k$, and $P f \le t
f$. We will show that if we choose $t = \max \{ 1 - \e(1-2q), 4q \}$, then the function
$f(k) = (2q)^k$ is $t$-superharmonic for $t < 1$ as long as $q <  1/4$. This may
be verified by an elementary calculation, and we provide the details
below for the convenience of the reader.

The first inequality $P f(k) \le t f(k)$, for $k=0$, is
\begin{equation} \label{eq:harmonic1} 
(1 - \e)f(0) + \e f(1) \le t f(0), 
\end{equation}
and then the remaining inequalities are of the form
\begin{equation} \label{eq:harmonic3} 
q^{k+1} f(0) + q^k f(1) + \cdots + q f(k) + p_k f(k+1) \le t
f(k), 
\end{equation}
for $k \ge 1$, where $p_k = 1 - q - q^2 - \cdots q^{k+1}$.

The first inequality \eqref{eq:harmonic1}, gives
\[ 1 - \e(1 - 2q) \le t,  \]
which is satisfied for some $t < 1$ if $q < 1/2$.

For the general case \eqref{eq:harmonic3}, we obtain
\[ q^{k+1} + q^k(2q) + \cdots + q(2q)^k + p_k (2q)^{k+1}  \le t (2q)^k.  \]
As $p_k \le 1$, this inequality is satisfied if
\[ q^{k+1}(1 + 2 + \cdots 2^k) + (2q)^{k+1} \le t (2q)^k.  \]
As the sum of the geometric series is less than $2^{k+1}$, it suffices
to choose $q$ such that
\[ 4q  \le t,  \]
and this holds for some $t < 1$ if $q < 1/4$, as required.
\end{proof}

We shall write $p^{(n)}(i, j)$ for the probability that the Markov
chain starting at $i$ at time $0$ is at location $j$ on the $n$th
step. A Markov chain on a graph is \emph{uniformly irreducible} if
there are numbers $N$ and $\e_0$ such that for any pair of
neighbouring vertices $i$ and $j$, $p^{(n)}(x, y) \ge \e_0$ for some
$n \le N$.  If we consider $\N_0$ to have the standard graph structure
in which $i$ and $j$ are connected by an edge if and only if
$\norm{i-j} \le 1$, then the Markov chain $(\N_0, P)$ above is
uniformly irreducible, with $N=1$ and $\e_0 = \min \{ \e, q^2, 1 -
q/(1-q)\}$.

\begin{lemma} \cite{woess}*{Lemma 8.1} \label{lemma:irreducible} %
If the Markov chain $(X, P)$ is uniformly irreducible, then there is a
number $A > 0$ such that $p^{(n)}(x, y) \le A^{d(x, y)}\rho(P)^n$.
\end{lemma}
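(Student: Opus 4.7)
The plan is to combine a lower bound on transition probabilities coming from uniform irreducibility with a matching upper bound on return probabilities coming from the spectral radius, tying them together via Chapman--Kolmogorov applied to loops.

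First, I would pass the quantitative content of uniform irreducibility along a path. Given $x, y \in X$ at graph-distance $k = d(x,y)$, fix a geodesic $x = x_0, x_1, \ldots, x_k = y$ in the underlying graph. For each consecutive pair $x_{i-1}, x_i$, uniform irreducibility supplies $n_i \le N$ with $p^{(n_i)}(x_i, x_{i-1}) \ge \e_0$. Concatenating these one-step bounds via Chapman--Kolmogorov yields
\[
 p^{(m)}(y, x) \;\ge\; \e_0^{\,k}, \qquad m := n_1 + \cdots + n_k \;\le\; Nk.
\]

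Second, I would show the diagonal bound $p^{(n)}(x,x) \le \rho(P)^n$ for every $n$. The key point is that $a_n := p^{(n)}(x,x)$ is super-multiplicative: restricting the Chapman--Kolmogorov sum to loops that revisit $x$ at time $m$,
\[
 p^{(m+n)}(x,x) \;=\; \sum_{z \in X} p^{(m)}(x,z)\, p^{(n)}(z,x) \;\ge\; p^{(m)}(x,x)\, p^{(n)}(x,x).
\]
Fekete's lemma applied to $\log a_n$ then gives that $a_n^{1/n}$ converges to $\sup_n a_n^{1/n}$, which by definition equals $\rho(P)$, so $p^{(n)}(x,x) \le \rho(P)^n$ for every $n$.

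Third, I would combine the two ingredients by considering loops at $x$ that pass through $y$. Chapman--Kolmogorov gives
\[
 p^{(n+m)}(x,x) \;\ge\; p^{(n)}(x, y)\, p^{(m)}(y, x) \;\ge\; \e_0^{\,k}\, p^{(n)}(x,y),
\]
and, using the spectral-radius bound,
\[
 p^{(n)}(x,y) \;\le\; \frac{\rho(P)^{n+m}}{\e_0^{\,k}} \;\le\; \rho(P)^n \left( \frac{\rho(P)^N}{\e_0}\right)^{\!k}.
\]
Setting $A := \max\{\rho(P)^N/\e_0,\,1\}$ gives the claimed estimate $p^{(n)}(x,y) \le A^{d(x,y)}\, \rho(P)^n$.

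The main technical wrinkle is the clean inequality $p^{(n)}(x,x) \le \rho(P)^n$, with no multiplicative constant in front; super-multiplicativity together with Fekete's lemma provides this in the aperiodic setting, which is the only one used in the paper, since in the Markov chain of Figure \ref{pic:markov-chain} we have $p(0,0) = 1-\e > 0$. In a potentially periodic situation one would restrict $n$ to the appropriate residue class modulo the period, or absorb an extra constant factor into $A$, at no cost for the application.
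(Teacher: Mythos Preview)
The paper does not prove this lemma; it is quoted from Woess \cite{woess}*{Lemma 8.1}, and your argument is essentially the standard one found there. Two small remarks are worth making.

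First, the final displayed inequality $\rho(P)^{n+m}/\e_0^{\,k} \le \rho(P)^n\bigl(\rho(P)^N/\e_0\bigr)^k$ is in the wrong direction when $\rho(P)<1$: since $m \le Nk$, one has $\rho(P)^m \ge \rho(P)^{Nk}$, not $\le$. The fix is immediate: use instead $\rho(P)^m \le 1$ (valid since $\rho(P)\le 1$ for any stochastic matrix) to obtain $p^{(n)}(x,y) \le \rho(P)^n/\e_0^{\,k}$, and take $A = 1/\e_0$.

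Second, your caveat about periodicity is unnecessary for the specific inequality $p^{(n)}(x,x)\le \rho(P)^n$. Super-multiplicativity alone gives it: if $p^{(n_0)}(x,x)^{1/n_0} > \rho(P)$ for some $n_0$, then $p^{(kn_0)}(x,x)^{1/(kn_0)} \ge p^{(n_0)}(x,x)^{1/n_0} > \rho(P)$ for all $k$, contradicting $\rho(P) = \limsup_n p^{(n)}(x,x)^{1/n}$. Thus $\sup_n p^{(n)}(x,x)^{1/n} = \rho(P)$ with no aperiodicity assumption, and your argument goes through cleanly in general.
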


Therefore, by Lemma \ref{lemma:irreducible}, there is a number $A >
0$ such that $F_{X_n}(Ln) \le Ln A^{Ln} \rho^n$, and this decays
exponentially for some $L > 0$, chosen sufficiently close to zero.

\section{Exponential decay for Heegaard splitting
  distance} \label{section:splitting distance}

In this section we prove linear progress with exponential decay for
Heegaard splitting distance, Theorem \ref{Hg}, using linear progress
with exponential decay for distance from the disc set, Proposition
\ref{prop:distance from D}, and Proposition \ref{prop:two
  quasiconvex}, to estimate the distance between two quasiconvex sets.

\begin{proof} (of Theorem \ref{Hg}.)
It will be convenient to think of the sample path $w_n$ as the
concatenation of two sample paths of lengths roughly $n/2$. To be
precise, choose $m = \lfloor n/2 \rfloor$. We may then consider the
sample path $w_n$ to consist of two segments, an initial segment
$w_{m}$ of length $m$, and a final segment $w_{m}^{-1}w_n$, of length
$n - m$. These two sample paths $w_{m}$ and $w_{m}^{-1}w_n$ are
independently distributed.

The distance from $\mathcal{D}$ to $w_{m} x_0$ decays exponentially in
$n$, by Proposition \ref{prop:distance from D}, i.e.
\begin{equation} \label{eq:overlap1}
\P( d(\mathcal{D}, w_{m} x_0)  \le Ln/2 ) \le K c^{n/2}. 
\end{equation}
Proposition \ref{prop:distance from D} also applies to the reflected
random walk generated by the probability distribution $\rmu(g) =
\mu(g^{-1})$, with the same basepoint $x_0$, but with the disc set
$h_{S^3} \mathcal{D}$, though possibly with different numbers
$\check K, \check L > 0$ and $\check c < 1$. To simplify notation, we
shall write $\mathcal{D}'$ for $h_{S^3} \mathcal{D}$. Therefore
applying Proposition \ref{prop:distance from D} to the reflected
random walk of length $n - m$ gives
\[ \P( d( \mathcal{D}', (w_{m}^{-1} w_n)^{-1} x_0) \le \check L
(n/2+1) ) \le \check K {\check c \ }^{n/2+1}. \]
Now applying the isometry $w_n$ we obtain
\begin{equation} \label{eq:overlap2} %
\P( d(w_{m} x_0, w_n \mathcal{D}' ) \le \check L (n/2+1) ) \le \check
K {\check c \ }^{n/2+1}.
\end{equation}

Choose $L_1 = \min \{ L, \check L\}$, and let $y$ be a closest point
in $\mathcal{D}$ to $w_m x_0$, and similarly, let $y'$ be a closest
point in $\mathcal{D}'$ to $w_m x_0$. We may apply Proposition
\ref{prop:two quasiconvex}, with the quasiconvex sets chosen to be the
two discs sets, and $y$ chosen to be $w_m x_0$, unless condition
\eqref{eq:prop:two quasiconvex1} fails, which in this context is
\begin{equation} \label{eq:sd} %
\gp{w_m x_0}{y}{y'} \le L_1 (n/2 + 1) - A, 
\end{equation}
where $A$ is the constant from Proposition \ref{prop:two quasiconvex}.
However, we now show that the probability that this condition fails
decays exponentially in $n$. If we choose $N = 4A / L_1$, then $L_1
(n/2 + 1) - A \ge L_1 n /4 - C - B/2$, for all $n \ge N$, where $A, B$
and $C$ are the constants from Proposition \ref{prop:two
  quasiconvex}. Using the Gromov product estimate \eqref{eq:prop:two
  quasiconvex3} from Proposition \ref{prop:two quasiconvex}, and
exponential decay for shadows, there are numbers $K_1$ and $c_1$ such
that
\begin{equation} \label{eq:overlap3} %
\P \left( \ \gp{w_m x_0}{x_0}{w_n x_0} \ge L_1 n/4 - C - B/2 \ \right)
\le K_1 c_1^{L_1 n/4 - C - B/2}.
\end{equation}

Assuming \eqref{eq:sd} fails, Proposition \ref{prop:two quasiconvex}
line \eqref{eq:prop:two quasiconvex2} implies that the distance from
$\mathcal{D}$ to $w_n \mathcal{D}'$ is at least
\[ d(\mathcal{D}, w_n \mathcal{D}') \ge d(\mathcal{D}, w_{m} x_0) + d(
w_{m} x_0, w_n \mathcal{D}') - 2 \gp{w_{m} x_0}{y}{y'} - B, \]
where $B$ is a constant which only depends on $\delta$ and the
quasiconvexity constant $Q$.  By Proposition \ref{prop:two
  quasiconvex} line \eqref{eq:prop:two quasiconvex3}, the difference
between the Gromov products $\gp{w_m x_0}{y}{y'}$ and $\gp{w_m
  x_0}{x_0}{w_n x_0}$ is bounded, as $d(w_n x_0, w_n \mathcal{D}') =
d(x_0, \mathcal{D'})$, and we have chosen a basepoint $x_0$ which lies
in both $\mathcal{D}$ and $\mathcal{D}'$. This implies
\[ d(\mathcal{D}, w_n \mathcal{D}') \ge d(\mathcal{D}, w_{m} x_0) + d(
w_{m} x_0, w_n \mathcal{D}') - 2 \gp{w_{m} x_0}{x_0}{w_n x_0} - 2C - B, \]
where $C$ is a number which only depends on $\delta$ and $Q$.  The
three events whose probabilities are estimated in lines
\eqref{eq:overlap1}, \eqref{eq:overlap2} and \eqref{eq:overlap3} need
not be independent, but the probability that at least one of them
occurs is at most the sum of the probabilities that each
occurs. Therefore
\[ \P( d(\mathcal{D}, w_n \mathcal{D}') \le L_1 n/4 ) \le K c^{n/2} +
\check K {\check c \ }^{n/2 + 1} + K_1 c_1^{L_1 n/4 - C - B/2}, \]
for all $n \ge N$, which decays exponentially in $n$, as required.
\end{proof}


\begin{bibdiv}
\begin{biblist}
\bibselect{Casson}
\end{biblist}
\end{bibdiv}


\vskip 20pt
\noindent Alexander Lubotzky \\
Hebrew University \\ 
\url{alex.lubotzky@mail.huji.ac.il} \\

\noindent Joseph Maher \\
CUNY College of Staten Island and CUNY Graduate Center \\
\url{joseph.maher@csi.cuny.edu} \\

\noindent Conan Wu \\
Princeton University \\
\url{conan777@gmail.com}


\end{document}